\documentclass[12pt,english]{article}  

\usepackage[width=17.5cm,height=23cm]{geometry}
\usepackage{amsmath,amssymb,amsthm,latexsym}
\usepackage{stmaryrd}
\usepackage{tikz}
\usepackage{tikz-cd}
\usepackage{cleveref}

\newcommand{\A}{\mathbb{A}}
\newcommand{\Z}{\mathbb{Z}}
\newcommand{\N}{\mathbb{N}}
\newcommand{\NN}{\mathbb{N}}
\newcommand{\Q}{\mathbb{Q}}
\newcommand{\FF}{\mathbb{F}}

\newcommand{\SH}{\mathbf{SH}}

\newcommand{\unit}{\mathbf{1}}

\newcommand{\MSL}{\mathsf{MSL}}

\newcommand{\KGL}{\mathsf{KGL}}
\newcommand{\kgl}{\mathsf{kgl}}
\newcommand{\KQ}{\mathsf{KQ}}
\newcommand{\kq}{\mathsf{kq}}
\newcommand{\kw}{\mathsf{kw}}
\newcommand{\KW}{\mathsf{KW}}
\newcommand{\C}{\mathsf{C}}
\newcommand{\D}{\mathsf{D}}
\newcommand{\E}{\mathsf{E}}

\newcommand{\Witt}{\mathbf{W}}

\newcommand{\vs}{{\mathsf{vs}}}
\newcommand{\f}{\mathsf{f}}
\newcommand{\vf}{{\mathsf{vf}}}
\newcommand{\pis}{\underline{\pi}}

\renewcommand{\P}{\mathbb{P}}

\newcommand{\SL}{\mathrm{SL}}
\newcommand{\Tan}{\mathrm{Tan}}
\newcommand{\Id}{\mathrm{Id}}

\newcommand{\cell}{\mathrm{cell}}

\newcommand{\Gm}{\mathbb{G}_m}

\newcommand{\CC}{\mathbb{C}}
\newcommand{\HZ}{\mathsf{H}\Z}

\newcommand{\HA}{\mathsf{H}A}

\newcommand{\Ab}{\mathbf{Ab}}
\newcommand{\Top}{\mathrm{Top}}
\newcommand{\Sm}{\operatorname{Sm}}

\newcommand{\colim}{\operatornamewithlimits{colim}}
\newcommand{\holim}{\operatornamewithlimits{holim}}

\newcommand{\tensor}{\otimes}
\newcommand{\Spec}{\operatorname{Spec}}

\newcommand{\Sq}{\operatorname{Sq}}

\newcommand{\pr}{\operatorname{pr}}

\newcommand{\inc}{\operatorname{inc}}
\newcommand{\ret}{\operatorname{ret}}
\newcommand{\s}{\mathsf{s}}

\newcommand{\KMW}{\mathbf{K}^\mathrm{MW}}
\newcommand{\KMil}{\mathbf{K}^\mathrm{M}}
\newcommand{\KWitt}{\mathbf{K}^\mathrm{W}}

\newcommand{\Forg}{\mathsf{Forg}}
\newcommand{\Hyper}{\mathsf{Hyp}}

\newcommand{\hyp}{\mathsf{h}}

\let\iso=\cong
\let\smash=\wedge
\let\directsum=\oplus
\let\minus=\smallsetminus

\author{Oliver R\"ondigs}
\title{The zeroth stable homotopy groups of motivic spheres over the integers}
\date{October 2, 2026}

\newtheorem{theorem}{Theorem}[section]
\newtheorem*{theorem*}{Theorem}
\newtheorem{lemma}[theorem]{Lemma}
\newtheorem{proposition}[theorem]{Proposition}
\newtheorem*{proposition*}{Proposition}
\newtheorem{corollary}[theorem]{Corollary}
\newtheorem*{corollary*}{Corollary}

\theoremstyle{definition}
\newtheorem{defn}[theorem]{Definition}
\newtheorem{defn*}{Definition}
\newtheorem{remark}[theorem]{Remark}
\newtheorem{remark*}{Remark}
\newtheorem{example}[theorem]{Example}

\numberwithin{equation}{section}

\begin{document}
\maketitle
\begin{abstract}
  The main result determines the
  zeroth integral Milnor-Witt stem of the motivic sphere
  spectrum in the Morel-Voevodsky motivic
  stable homotopy category of the integers.
  The component in weight zero is the Grothendieck-Witt ring
  of nondegenerate symmetric bilinear forms over $\Z$.
  Along the way, cellularity of connective Witt theory,
  as well as an absolute purity result
  for the $\eta$-inverted motivic sphere spectrum, is
  established over Dedekind domains
  of mixed characteristic.
\end{abstract}

\section{Introduction}
\label{section:introduction}

A key foundational result in the Morel-Voevodsky motivic homotopy theory
is Morel's Theorem identifying the endomorphism ring of the motivic sphere
spectrum $\unit_F$ over a field $F$ in the motivic stable homotopy category
$\SH(F)$ with the Grothendieck-Witt ring of symmetric bilinear forms
over $F$ \cite{morel.newton}. Since the motivic sphere spectrum is the
unit with respect to the symmetric monoidal structure on $\SH(F)$
given by the smash product, the
homotopy groups of every
motivic spectrum in $\SH(F)$ receive a natural action by the
Grothendieck-Witt ring of symmetric bilinear forms over $F$.
As another application,
certain enumerative geometry problems classically answered with
an integer $n$ now admit refinements as the class of a virtual
symmetric bilinear form with the rank $n$ \cite{levine.refined}.

The surprise appearance of symmetric bilinear forms actually occurs
in weight zero of an integer-graded ring of endomorphisms shifted
by smash powers of the Tate circle $\Gm$. Morel succeeded in
identifying not only these shifted endomorphisms as the so-called
Milnor-Witt $K$-theory of $F$ (an extension of Milnor's classical
symbolic $K$-theory), but even enhanced this endomorphism identification 
to the level of
Nisnevich sheaves on the site of smooth $F$-schemes. Moreover, he
lifted this result to the unstable $\A^1$-homotopy category
of a perfect field \cite{morel.at}.

Extensions of Morel's Theorem to other bases than fields exist, but
tend to come with restrictions and modifications. For example,
the main result of \cite{bachmann-ostvaer.models} transfers the weight zero
case of Morel's Theorem to $2$-regular number rings in which $2$ is inverted,
after localizing at $2$.
This is extended to all weights in work by Figini and {\O}stv{\ae}r, after
$2$-completion or $2$-localization. 
More general Dedekind schemes are discussed in Figini's dissertation.
Over a discrete valuation ring with residue field of odd characteristic $p$
and fraction field of characteristic $0$, Morel's Theorem holds up to
weight $1$ after inverting $p$ by \cite[Theorem 7.13]{rs.lecture}.

The main result proved here as one case of Theorem~\ref{thm:pi0-sphere-z}
is to identify the so-called zeroth Milnor-Witt stem $\pi_{0+(\star)}\unit_\Z$
of the motivic sphere
spectrum over the initial ring of integers with the naive Milnor-Witt
$K$-theory $\KMW_{-\star}(\Z)$
of the integers. In a more down to earth form, this
integer-graded collection of abelian groups can be
pictured as follows:
\[\begin{tikzcd}
  \dotsm \ar[r,"-2",shift left] & \Z\{[-1]^2\} \ar[r,"-2",shift left] \ar[l,"1",shift left]
  & \Z\{[-1]\} \ar[r,"{(-2,1)}",shift left] \ar[l,"1",shift left] &
  \Z\{1\}\oplus \Z\{\hyp\} \ar[r,"\pr_1",shift left] \ar[l,"\pr_1",shift left] &
  \Z\{\eta\} \ar[r,"1",shift left] \ar[l,"{(-2,1)}",shift left] & \Z\{\eta^2\} \ar[r,"1",shift left] \ar[l,"-2",shift left] & \dotsc \ar[l,"-2",shift left]  \\
  \star & -2 & -1 & 0 & 1 & 2 & 
\end{tikzcd}
\]
Here $\hyp=2+\eta[-1]$ represents the hyperbolic plane, arrows pointing to the
left denote multiplication with the unit $[-1]$, and arrows pointing to the
right denote multiplication with the first algebraic Hopf map $\eta$.
Moreover, this is the first nontrivial Milnor-Witt stem of $\unit_\Z$
in the sense that $\pi_{s+(\star)}\unit_\Z=0$ for $s<0$.
Admirably, all abelian groups appearing are finitely generated free,
which is the way it should be.

While Theorem~\ref{thm:pi0-sphere-z} achieves a similar statement
for all the local rings $\Z_{(p)}$ (at least if $p>5$), the disappointment of
course is the focus on just the homotopy groups instead of the homotopy
sheaves, and then only over very few number rings.
More complicated number rings like $\Z[\sqrt{-5}]$ present additional
challenges in that
$\pi_{-1-(1)}\unit_{\Z[\sqrt{-5}]}\iso \mathrm{Pic}(\Z[\sqrt{-5}])\iso \Z/2$.
Multiplying the nonzero element in that group with elements from
$\pi_{1+(\star)}\unit_{\Z[\sqrt{-5}]}$
such as the second algebraic Hopf map $\nu$ and the first topological
Hopf map $\eta_\Top$ potentially produces elements in
$\pi_{0+(\star)}\unit_{\Z[\sqrt{-5}]}$ which are not captured
by the naive Milnor-Witt ring of $\Z[\sqrt{-5}]$.
Figini's dissertation studies such situations.

In the course of this computation, several results are obtained which
might be of independent interest, such as the cellularity of
connective Witt theory over Dedekind domains
(Proposition~\ref{prop:kw-cellular}), a purity
result for the $\eta$-inverted sphere $\unit[\eta^{-1}]$ over
discrete valuation rings of mixed characteristic
(Corollary~\ref{cor:absolute-purity-etainv}), and a few
computations over fields of characteristic $2$.
A crucial device is the extension of
periodic hermitian $K$-theory to an absolute motivic ring spectrum
with plenty of pleasant properties \cite{chn}, among them
cellularity \cite{kumar-roendigs}.
Lifting these properties to its very effective cover, at least to some
extent, requires additional efforts, and is achieved here only partially.

\subsubsection*{Notation}
Let $S$ be a qcqs (quasi-compact and quasi-separated) base scheme.
The case of most relevance occurs if $S$ is (essentially smooth over)
a Dedekind scheme.
A Dedekind scheme 
refers to a finite disjoint union of spectra of
Dedekind domains or fields;
that is, a regular noetherian scheme of Krull dimension at most $1$.

Given integers $s,w\in \Z$, the motivic sphere
$S^{s+(w)}=S^{s+w,w}$ is the $s$-fold simplicial suspension of the
$w$-fold smash product of the Tate circle $\Gm$ with itself.
It could be called the motivic sphere of connectivity $s$ and weight $w$,
or of topological degree $s+w$ and weight $w$.
The corresponding suspension functor on the stable motivic homotopy category $\SH(S)$ is denoted 
by $\Sigma^{s+(w)}=\Sigma^{s+w,w}$.
To any motivic spectrum $\E\in\SH(S)$ and $s,w\in \Z$ one
associates the motivic homotopy group 
\[
\pi_{s+(w)}\E:=
\pi_{s+w,w}\E:=
[S^{s+(w)},\E]_{\SH(S)} =: [S^{s+(w)},\E]_S
\]
where the last two notations denote abelian groups of morphisms in $\SH(S)$.
These may be assembled to the $s$-th Milnor-Witt stem of $\E$ as
follows:
\[ \pi_{s+(\star)}\E:=\pi_s\E := \bigoplus_{w\in \Z}\pi_{s+(w)}\E\]
The presheaf $\Sm_S\to\Ab,\, X\mapsto [\Sigma^{s+(w)}X_+,\E]_S$
of homotopy groups sheafifies to a Nisnevich sheaf $\pis_{s+(w)}\E$
on $\Sm_S$; similar conventions as depicted for homotopy groups apply
to these homotopy sheaves. If $S$ is local henselian, the homotopy groups
of $\E\in \SH(S)$ are the global sections of the homotopy sheaves of $\E$.

Besides the motivic sphere spectrum $\unit_S$ (which is the unit of a
closed symmetric monoidal structure on $\SH(S)$ denoted by $\smash$),
the motivic Eilenberg-MacLane spectra $\HZ_S$ and $\HA_S$
(for the ring of integers
and an abelian group $A$), the periodic algebraic and hermitian $K$-theory
spectra $\KGL_S$ and $\KQ_S$, and their very effective covers appear in
the following. The subscript denoting the base scheme may be left out
occasionally. As an abbreviation, $H^{s+w,w}(S):=\pi_{-s-w,-w}\HZ_S$ and 
$h^{s+w,w}(S):=\pi_{-s-w,-w}\HZ_S/2$ occur.

\subsection*{Acknowledgements}

The work presented here was announced at the conference
``Motivic homotopy theory'' held within the DFG-Sonderforschungsbereich 1085
``Higher Invariants'' at Regensburg University; I thank sincerely
for this invitation and the support. Its documentation took longer
than expected, which is my sole responsibility.
I thank Alexey Ananyevskiy, Andrea Figini, Markus Spitzweck,
and Paul Arne {\O}stv{\ae}r
for helpful discussions, and Marc Hoyois for a helpful
comment. The work was
partially funded by
zukunft.niedersachsen, the joint science funding program of the
Lower Saxony Ministry of Science and Culture and the Volkswagen
Foundation, and by the DFG-Sachbeihilfe with project number
539085450. Finally, I would like to thank the Simons Laufer
Institute for Mathematical Sciences for excellent working conditions,
allowing me to finish this documentation while participating as a Senior
Clay Scholar during the Fall 2026 semester. In particular
this work was partially supported by
the Clay Mathematics Institute and the National Science Foundation under
Grant No.~DMS-2424139.

\section{Milnor-Witt $K$-theory}\label{sec:milnor-witt-k}

\begin{defn}\label{def:milnor-witt-ktheory}
  Let $R$ be a commutative ring.
  The \textit{(naive) Milnor-Witt} $K$-\textit{theory} $\KMW_\star(R)$
  of $R$ has the Hopkins-Morel
  presentation as the associative graded
  ring (with unit) whose generators are the multiplicative units $[u],u\in R^\times$
  in degree 1, and a generator $\eta$ in degree $-1$, subject
  to four relations:
  \begin{description}
  \item[Steinberg] $[u]\cdot [v] = 0$ if $u+v=1$.
  \item[Logarithm] $[uv] = [u]+ [v] +\eta\cdot [u]\cdot [v]$
  \item[Commutativity] $[u]\cdot \eta = \eta\cdot [u]$
  \item[Hyperbolic Plane] $\hyp\cdot \eta = 0$, where $\hyp:=2+\eta[-1]$
  \end{description}
  The definition extends to schemes $S$ via their ring of global sections.
\end{defn}

To work with this ring, a few of its properties will be used. These
are due to Morel in the case of
fields \cite[Lemma 3.5]{morel.at} and generalized to the setting here in
\cite[Lemma 4.13]{schlichting.euler}. To conveniently state them, let
$\langle u \rangle := 1+\eta\cdot [u]$.

\begin{lemma}\label{lem:milnor-witt-ktheory}
  Let $S$ be a scheme, and $u,v$ units in the global sections of $S$.
  Then $\langle uv\rangle = \langle u \rangle \cdot \langle v\rangle$.
  Moreover, $[1]=0$ and $\langle 1\rangle =1$.
  In particular, $\langle u \rangle$ is a unit in $\KMW(S)$ for every unit $u$ in $S$.
\end{lemma}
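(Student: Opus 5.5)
Everything follows from the Logarithm relation together with Commutativity, so the plan is a direct algebraic computation in $\KMW_\star(S)$.

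\textbf{Multiplicativity.} Expand the right-hand side:
\[
\langle u\rangle\langle v\rangle=(1+\eta[u])(1+\eta[v])=1+\eta[u]+\eta[v]+\eta[u]\eta[v].
\]
Commutativity lets $\eta$ move past $[u]$, so $\eta[u]\eta[v]=\eta^2[u][v]$. On the other side, multiply the Logarithm relation by $\eta$:
\[
\eta[uv]=\eta[u]+\eta[v]+\eta^2[u][v].
\]
Hence $\langle uv\rangle=1+\eta[uv]$ equals the expansion above, which gives $\langle uv\rangle=\langle u\rangle\langle v\rangle$. No Steinberg or hyperbolic relation is needed for this step.

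\textbf{The identity $[1]=0$.} This is the step I expect to be the main obstacle. The naive move is to apply Logarithm with $u=v=1$, giving $[1]=2[1]+\eta[1]^2$, that is,
\[
0=[1]+\eta[1][1]=[1]\,\langle 1\rangle .
\]
So $[1]=0$ follows once $\langle 1\rangle$ is known to be a unit, but that is exactly what we want to prove, so the argument is circular. I would try the following ways around this.
\begin{itemize}
\item Use Steinberg directly. Over a field one uses $[u][1-u]=0$, but over a general ring such as $\Z$ there may be no unit $u$ with $1-u$ also a unit, so this may fail.
\item Work in degree $0$ and use $\hyp\eta=0$, i.e.\ $2\eta+\eta^2[-1]=0$. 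Combined with the identity $[1]\langle1\rangle=0$ and the power-series behaviour of $\langle1\rangle=1+\eta[1]$, this might force $\eta[1]=0$. A single multiplication by $\eta$ gives $\eta[1]+\eta^2[1]^2=0$, and one would try to iterate this.
\item Follow \cite[Lemma 4.13]{schlichting.euler}, which is where the paper takes these properties from, in case it relies on an additional input.
\end{itemize}
If none of these works, the fallback is to take $[1]=0$ from the cited reference rather than derive it from the four relations alone.

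\textbf{Remaining claims.} Given $[1]=0$, we get $\langle 1\rangle=1+\eta[1]=1$. Then multiplicativity yields
\[
\langle u\rangle\langle u^{-1}\rangle=\langle uu^{-1}\rangle=\langle 1\rangle=1,
\]
and Commutativity shows these degree-$0$ elements commute, so $\langle u\rangle$ is a two-sided unit with inverse $\langle u^{-1}\rangle$.
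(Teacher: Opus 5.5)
Your multiplicativity computation is correct. So is your deduction that $\langle u\rangle$ is invertible with inverse $\langle u^{-1}\rangle$, once $\langle 1\rangle=1$ is known. The paper itself gives no argument here; it only cites Morel's Lemma~3.5 and Schlichting's Lemma~4.13.

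The genuine gap is $[1]=0$, which you leave open, and both $\langle 1\rangle=1$ and the unit statement depend on it. Your second suggestion cannot close it as stated. The identities it uses are $[1]\langle 1\rangle=0$, Commutativity and $\hyp\eta=0$, and none of them links $[1]$ to $[-1]$. In fact all of them hold in $\FF_2[\eta,x]/(x+\eta x^2)$ with $[1]=x$ and $[-1]=0$. There $x\neq 0$, since $1+\eta x$ is not a unit of the polynomial ring, and hence also $\eta x\neq 0$, since $x=(\eta x)x$. Nor is any power-series inversion of $1+\eta[1]$ available in this graded setting.

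The missing idea is to feed $-1$ into the Logarithm relation. Taking $u=v=-1$ gives
\[
[1]=2[-1]+\eta[-1][-1]=(2+\eta[-1])[-1]=\hyp\,[-1].
\]
Hence $\eta[1]=\eta\hyp\,[-1]=0$, because Commutativity gives $\eta\hyp=\hyp\eta$, which vanishes by the Hyperbolic relation. Therefore $\langle 1\rangle=1+\eta[1]=1$, and your identity $[1]\langle 1\rangle=0$ (Logarithm with $u=v=1$) yields $[1]=0$.

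Equivalently, $u=1$, $v=-1$ gives $[1]\langle -1\rangle=0$. Then $[1]\hyp=[1](1+\langle -1\rangle)=[1]$, and so $\eta[1]=[1]\eta\hyp=0$.

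The Steinberg relation is never used, so this works over any commutative ring, including $\Z$. With this step filled in, your remaining steps go through unchanged.
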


\begin{example}\label{ex:kmw-integers}
  The naive Milnor-Witt $K$-theory of $\Z$ has the following form;
  see \cite[Remark 2.2.8]{deglise.kmw-notes} for a slightly different
  description.
  For a nonzero integer $n$, the group $\KMW_n(\Z)$ is infinite cyclic, generated
  by $[-1]^n$ if $n$ is positive, and by $\eta^{-n}$ if $n$ is negative.
  The ring $\KMW_0(\Z)$ is isomorphic to the Grothendieck-Witt ring of
  non-degenerate symmetric bilinear forms over $\Z$. In other words, it is isomorphic to
  $\Z\{1\}\directsum \Z\{\eta\cdot [-1]\}$, with
  $(\eta\cdot [-1])^2= -2 (\eta\cdot [-1])$ by the Commutativity and Logarithm (or
  Hyperbolic) relations.
  The remaining multiplicative structure is dictated by the effect of multiplying
  with $[-1]$ and $\eta$, which is given by the Logarithm and Hyperbolic relations.
  In particular, for every integer $n$, the abelian group $\KMW_n(\Z)$ is free
  finitely generated, and in particular torsion-free.
  To see this, observe that since $[1]=0$ by Lemma~\ref{lem:milnor-witt-ktheory},
  the only relevant unit is $-1$.
  Hence $\KMW(\Z)$ is a quotient of the (commutative!) polynomial
  ring over $\Z$ in two variables, $[-1]$ of degree $1$ and $\eta$ of degree $-1$.
  The Steinberg relation is irrelevant.
  The Logarithm relation reduces to $\eta\cdot [-1]^2 = -2[-1]$, the Hyperbolic
  relation says $\eta^2\cdot [-1] = -2\eta$.
  Hence the monomials reduce as follows for $k,\ell\in \NN$:
  \[ [-1]^k\cdot \eta^\ell =
    \begin{cases}
      (-2)^{\ell}[-1]^{k-\ell} & k>\ell \\ (-2)^{k-1}[-1]\cdot \eta & k = \ell >0 \\
      (-2)^k \eta^{\ell -k} & k < \ell 
    \end{cases}\]
  The aforementioned structure follows.
\end{example}

Two quotient rings of $\KMW(S)$ are of particular relevance, the naive Milnor $K$-theory
$\KMil(S)$ obtained as the quotient by the two-sided ideal generated by $\eta$,
and the naive Witt $K$-theory $\KWitt(S)$
obtained as the quotient by the two-sided ideal generated by $\hyp$.
By definition, $\KMil(S)$ is concentrated in nonnegative degrees, and consists
of the group $\Z$ in degree $0$.

\begin{example}\label{ex:kmil-kwitt-integers}
  The naive Milnor $K$-theory $\KMil(\Z)$ has the property that
  $\KMil_n(\Z)$ is the group with two elements for every positive $n$, generated
  by $[-1]^n$. The Logarithm relation implies $2[-1]=0\in \KMil_1(\Z)$.
  The naive Witt $K$-theory of $\Z$ satisfies
  $\KWitt_n(\Z)\iso \Z$ for every $n$.
  Multiplication with $\eta$ on $\KMW(\Z)$ induces a short exact sequence
  \[ 0 \to \KWitt_{n+1}(\Z) \to \KMW_n(\Z) \to \KMil_n(\Z) \to 0 \]
  for every integer $n$. 
\end{example}

\begin{example}\label{ex-kmil-2-local-integers}
  Milnor defined $\KMil(F)$ exclusively for fields in \cite{milnor.inv}.
  While its definition makes sense for general commutative rings,
  some properties of the resulting functor are less desireable,
  even after Zariski sheafification. One problem concerns local rings
  with very small residue fields. For example, there are no
  Steinberg relations in $\KMil(\Z_{(2)})$.
  Using the standard symbol notation with curly instead
  of right-angled brackets, the element
  $\{27,13\}\in \KMil_2(\Z_{(2)})=\Z_{(2)}^\times \tensor_\Z\Z_{(2)}^\times$
  is nonzero, but its image in $\KMil_2(\Q)$ is $0$, because
  \[ \{27,13\} = \{27,\frac{-1}{2}\}+\{27,-26\} = -\{3^3,-2\}
    =-3\{3,-2\} = 0\]
  as indicated in \cite[Remark 8.7]{vdkallen}.
  This ruins the
  desired injectivity, which holds
  for $\KMil_2(\Z_{(p)})\to \KMil_2(\Q)$ if $p>5$
  by loc.~cit. and \cite{dennis-stein}.
  An improved variant of Milnor $K$-theory is given in \cite{kerz.improved}.
\end{example}

Abusing notation, a unit $u$ in the global sections of $S$ defines an
unstable map of
spheres also denoted $[u]\colon S^0\to S^{(1)}$.
Let $\eta\colon S^{1+(2)}\simeq \A^2\minus \{0\} \to \P^1\simeq S^{1+(1)}$
also denote the unstable algebraic Hopf map.
With the standard convention of not notationally distinguishing between
unstable maps and their resulting maps of $\P^1$-suspension spectra,
one obtains $[u]\in \pi_{0-(1)}\unit_S$ and $\eta\in \pi_{0+(1)}\unit_S$.
Work of Morel \cite{morel.newton} and Hu-Kriz \cite{hu-kriz.steinberg},
extended in \cite{druzhinin.milnor-witt}, supplies the following statement.

\begin{theorem}\label{thm:milnor-witt-ktheory}
  The canonical map defines a ``degree inverting'' ring 
  homomorphism $\KMW_\star(S)\to \pi_{0-(\star)}\unit_S$.
\end{theorem}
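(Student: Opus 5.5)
The plan is to check the four defining relations of Definition~\ref{def:milnor-witt-ktheory} directly in $\pi_{0-(\star)}\unit_S$, working over $\Spec\Z$ and pulling back. First the target needs to be a graded ring with the right units. The smash product makes $\bigoplus_w \pi_{0-(w)}\unit_S$ a graded ring. It is graded-commutative only up to the twist on $\Gm^{\smash n}$, which is governed by $\epsilon=-\langle -1\rangle$. Sending $[u]\mapsto[u]\in\pi_{0-(1)}\unit_S$ and $\eta\mapsto\eta\in\pi_{0+(1)}\unit_S$ then defines a homomorphism from the free associative graded ring on these generators, with degrees inverted.

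Every generator and every relation is defined over $\Spec\Z[u^{\pm1}]$ (resp. $\Spec\Z[u^{\pm1},v^{\pm1}]$, or over the Steinberg scheme $\Spec\Z[t,t^{-1},(1-t)^{-1}]$). Each unit of $\mathcal{O}(S)$ is classified by a map $S\to\Gm$, and a pair $u,v$ with $u+v=1$ by a map to $\A^1\minus\{0,1\}$. So it suffices to verify the relations universally in $\SH$ of these smooth $\Z$-schemes. One can work even unstably with the maps $X_+\to\Gm$, because pullback along $f^*$ is symmetric monoidal and preserves $\eta$ and $[u]$.

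The four relations are handled separately.
\begin{itemize}
\item \textbf{Commutativity.} Since $\eta$ has degree $(1)$ and $[u]$ has degree $-(1)$, the switch introduces $\epsilon$ twice, i.e.\ $\epsilon^2=1$. Equivalently, $\eta$ is central because $\eta\epsilon=\eta$.
\item \textbf{Hyperbolic plane.} The relation $\eta\hyp=0$ is $\eta\epsilon=\eta$. This follows from the unstable fact that the Hopf construction $\A^2\minus 0\to\P^1$ is invariant under swapping coordinates up to the sign $-1$ on $\Gm\smash\Gm$ (Morel's argument, valid over $\Z$).
\item \textbf{Logarithm.} This follows from analysing the multiplication map $\Gm\times\Gm\to\Gm$. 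Its stable splitting $\Sigma^\infty(\Gm\times\Gm)_+\simeq \unit\vee\Gm\vee\Gm\vee\Gm^{\smash 2}$ shows that the component on the top cell is the Hopf map $\eta$, which is the Hopf construction on the multiplication (Hu--Kriz/Morel). This gives $[uv]=[u]+[v]+\eta[u][v]$.
\item \textbf{Steinberg.} This needs the universal vanishing of $[t][1-t]$ on $\A^1\minus\{0,1\}$, i.e.\ the nullhomotopy of $\Sigma^\infty(\A^1\minus\{0,1\})_+\to \Gm^{\smash2}$ of Hu--Kriz.
\end{itemize}

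I expect the Steinberg relation to be the main obstacle. The Hu--Kriz argument is formulated over fields, while here the base is an arbitrary qcqs $S$. The plan is to run their proof over $\Spec\Z$, where it is a statement about smooth schemes and explicit $\A^1$-homotopies, and then pull back. This is what \cite{druzhinin.milnor-witt} does. I would cite it, noting that every homotopy used is given by polynomial formulas with integer coefficients, so the argument is base-independent.
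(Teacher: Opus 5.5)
Your proposal follows the same route as the paper. The paper gives no argument of its own: it cites Morel and Hu--Kriz for the relations and Druzhinin for their validity over an arbitrary base, and your sketch (prove the universal case over $\Z$, then pull back) unpacks exactly these sources, including relying on Druzhinin for Steinberg.

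One slip to fix in the Commutativity item: interchanging classes of weights $1$ and $-1$ costs $\epsilon^{1\cdot(-1)}=\epsilon^{-1}=\epsilon$, a single factor rather than $\epsilon^2$, so commutativity does not follow from graded commutativity alone. It comes from $\eta\epsilon=\eta$, your second justification. That in turn needs $\hyp=1-\epsilon$, i.e.\ Morel's $\epsilon=-\langle-1\rangle$ combined with the identification of the class of $x\mapsto -x$ with $1+\eta[-1]$, which comes from your splitting of the multiplication map.
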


To show that this homomorphism is an isomorphism for $S=\Spec(\Z)$,
the corresponding result for very effective hermitian $K$-theory will be
proven first. This in turn requires a brief reminder on connectivity
and effectivity notions in the motivic stable homotopy category.

\section{Connectivity, effectivity, and cellularity}
\label{sec:conn-effect}

The setting discussed in
\cite[Appendix B]{bachmann-hoyois.norms} supplies several filtrations
of interest on $\SH(S)$. Given a subset $I\subset \Z\times \Z$, let
$\SH(S)_{\geq I}$ be the full subcategory of $\SH(S)$ generated
under colimits and
extensions by the set $\{\Sigma^{s+(w)}X_+\colon X\in \Sm_S,(s,w)\in I\}$.
Its inclusion $\inc(S)_{\geq I}$ admits a right adjoint $\ret(S)_{\geq I}$.
The inclusion $\inc(S)_{< I}\colon \SH(S)_{< I}\hookrightarrow \SH(S)$ of the
corresponding right orthogonal full subcategory also admits a
right adjoint $\ret(S)_{< I}$. 
By \cite[Prop.~B.3]{bachmann-hoyois.norms}, 
$\E\in \SH(S)_{\geq I}$ if and only if $i_s^\ast\E\in \SH(F_s)_{\geq I}$ for
every point $s\in S$ having residue field inclusion $i_s\colon \Spec(F_s)\to S$.
Here $S$ is locally of finite Krull dimension.

Let $n\in \N$. A motivic spectrum in $\SH(S)_{\geq \Z\times \{n\}}$ is
called $n$-\textit{effective} (or simply \textit{effective} if $n=0$).
The composition $\inc(S)_{\Z\times \{n\}}\circ \ret(S)_{\Z\times \{n\}} =: \f(S)_n$,
called the $n$-effective cover, gives rise to Voevodsky's slice filtration
\[ \dotsc \to \f(S)_{n+1}\to \f(S)_n \to \dotsc \to \Id_{\SH(S)} \]
whose associated graded $\s(S)_n$ is the $n$-th effective slice functor.
Here ``$(S)$'' is often removed to simplify the notation.
A motivic spectrum in $\SH(S)_{\geq \{n\}\times \{n\}}$ is
called $n$-\textit{very effective} (or simply \textit{very effective} if $n=0$).
The composition
$\inc(S)_{\{n\}\times \{n\}}\circ \ret(S)_{\{n\}\times \{n\}} =: \vf(S)_n$
(usually denoted $\tilde{\f}_n$)
is called the $n$-very effective cover;
these give rise to the {\O}stv{\ae}r-Spitzweck filtration
\[ \dotsc \to \vf(S)_{n+1}\to \vf(S)_n \to \dotsc \to \Id_{\SH(S)} \]
whose associated graded $\vs(S)_n$ (usually denoted $\tilde{\s}_n$)
is the $n$-th very effective slice functor.
In both cases ``$(S)$'' is often removed to simplify the notation.
A motivic spectrum in $\SH(S)_{\geq \{n\}\times \Z}$ is
called $n$-\textit{connective} (or simply \textit{connective} if $n=0$).
The corresponding cover is usually denoted by a subscript ``${}_{\geq n}$''.
If $S$ is the spectrum of a field, a motivic spectrum $\E$ is $n$-connective
if and only if the homotopy sheaves $\pis_{s+(\star)}\E=0$ vanish for $s<n$
by work of Morel \cite{morel.stable-conn}.
If $S$ is more generally a locally Noetherian scheme of
finite Krull dimension $d$ and $\E\in \SH(S)$ is $n$-connective,
$\pis_{s+(\star)}\E =0$ vanishes for $s<n-d$ by \cite[Theorem]{druzhinin.conn}.
Moreover, the presheaf version \cite[Theorem V]{druzhinin.conn} shows
that the homotopy groups $\pi_{s+(\star)}\E=0$ vanish for $s<n-d$.
A stronger unstable version based on valuative dimension
is given in \cite{bouis-kundu}.

In a somewhat orthogonal direction
given in \cite{dugger-isaksen.cell}, let $\SH^\cell(S)$ be the full
subcategory generated
under colimits by all spheres $\{S^{s+(w)}\in \SH(S),s,w\in \Z\}$.
A motivic spectrum in $\SH^\cell(S)$ is called \textit{cellular}.
The inclusion of the cellular motivic spectra admits a right adjoint
compatible with colimits, and the resulting counit will be
denoted $\E^\cell \to \E$.
It induces an isomorphism on homotopy groups, but an isomorphism
of all homotopy sheaves if and only if $\E$ is cellular.
Examples of cellular motivic spectra are the periodic $K$-theory
spectra $\KGL$ (by loc.~cit.) and $\KQ$ \cite{kumar-roendigs, rso.cell}.
Note that $(-)^\cell$ preserves connectivity, but not necessarily effectivity.

\section{Hermitian $K$-theory}
\label{sec:hermitian-k-theory}

Let $\KQ_S\in \SH(S)$ denote the motivic ring spectrum
constructed in \cite{chn}, and let $\kq_S:=\vf_0\KQ_S\to \KQ_S$
denote its very effective cover, a motivic ring spectrum in its own right
by \cite{grso}.
Similarly, let $\KGL_S\in \SH(S)$ denote the motivic ring spectrum
representing homotopy $K$-theory, with (very) effective cover
$\kgl_S:=\vf_0\KGL_S$.
The forgetful and hyperbolic functors fit into a cofiber sequence
\begin{equation}\label{eq:woodKQ}
  \Sigma^{(1)}\KQ_S\xrightarrow{\eta\smash \KQ_S} 
  \KQ_S \xrightarrow{\Forg} \KGL_S
  \xrightarrow{\Sigma^{1+(1)}\Hyper\circ (\beta\smash\KGL_S)^{-1}}
  \Sigma^{1+(1)} \KQ_S
\end{equation}
in which the forgetful functor is a ring map, and $\beta\in \pi_{1+(1)}\KGL_S$
is ``the'' Bott periodicity element \cite{chn}.
The very effective version of this cofiber sequence
from \cite{aro} and \cite[Theorem 4.3]{kro} will be used, stated
here for convenience.

\begin{theorem}
\label{thm:woodkq}
If $S$ is an essentially smooth scheme over a Dedekind scheme $\mathcal{D}$, 
the very effective covers of algebraic and hermitian $K$-theory fit into the cofiber sequence
\[ 
\Sigma^{(1)}\kq_S \xrightarrow{\eta\smash \kq_S} 
\kq_S \xrightarrow{\vf_0\Forg} \kgl_S \to 
\Sigma^{1+(1)} \kq_S.
\]
Moreover, 
the canonical map
$\Sigma^{1+(1)}\kgl_S \simeq \vf_1\KGL_S \to \vf_0\KGL_S = \kgl_S$
composes with the last map to 
$\Sigma^{1+(1)}\vf_0\Hyper$.
\end{theorem}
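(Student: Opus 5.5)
The plan is to apply the very effective cover functor $\vf_0$ to the cofiber sequence (\ref{eq:woodKQ}) and then identify each term. Since $\vf_0=\inc_{\{0\}\times\{0\}}\circ\ret_{\{0\}\times\{0\}}$ is a right adjoint composed with an inclusion, it is exact on the stable level only up to a correction term. Concretely, $\vf_0$ of a cofiber sequence $A\to B\to C$ yields a cofiber sequence $\vf_0A\to\vf_0B\to \vf_0C'$, where $C'$ is the cofiber of $\vf_0 A\to \vf_0 B$. The task is therefore to show that this cofiber is $\kgl_S$.

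The first step is to identify $\vf_0\Sigma^{(1)}\KQ_S$. By the definition of very effectivity, $\Sigma^{(1)}$ carries $\SH(S)_{\geq\{n\}\times\{n\}}$ into shifts of the $\{n\}\times\{n+1\}$ generators. The more useful identity is $\vf_0\Sigma^{1+(1)}\E\simeq\Sigma^{1+(1)}\vf_{-1}$-type shifts. I would use the known periodicity $\Sigma^{(1)}$ versus $\Sigma^{1+(1)}=\P^1$-suspension together with the Bott and $(8,4)$-periodicity of $\KQ$ from \cite{chn} to compute $\vf_0\Sigma^{(1)}\KQ_S$. I then want to show that the map $\Sigma^{(1)}\kq_S\to\vf_0\Sigma^{(1)}\KQ_S$ induced by $\eta$ gives, after composing with $\eta$, the displayed first map. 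The point is that $\Sigma^{(1)}\kq_S$ is very effective, since $\Gm$-suspension preserves very effectivity, so the map $\eta\smash\kq_S$ is at least defined and the sequence makes sense.

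The key step is to prove that the cofiber $C$ of $\eta\colon\Sigma^{(1)}\kq_S\to\kq_S$ maps by an equivalence to $\kgl_S$ via $\vf_0\Forg$. Both objects are very effective, so it suffices to check after base change to all points of $S$, using \cite[Prop.~B.3]{bachmann-hoyois.norms} and the fact that $\vf_0$ commutes with base change along essentially smooth maps and with pullback to residue fields in this setting. The latter is where the Dedekind hypothesis and the absolute construction of $\KQ$ from \cite{chn} enter; I expect it to be the main obstacle, since commutation of $\vf_0$ with $i_s^\ast$ for closed points is not formal. Over fields, the statement is the known result of \cite{aro}, so one compares homotopy sheaves there. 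For the comparison I would exploit that $C\to\kgl_S$ induces an isomorphism on $\ret_{\geq\{0\}\times\{0\}}$: maps from generators $\Sigma^{s+(w)}X_+$ with $s\geq0$, $w\geq 0$ into $C$ and into $\KGL$ agree by the long exact sequence of (\ref{eq:woodKQ}), while in the edge degree, where $\vf_0$ truncates, one uses surjectivity of $\Forg$ on $\pi_{0+(0)}$ after the $\eta$-quotient.

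For the final claim, I would compose the canonical map $\vf_1\KGL_S\to\vf_0\KGL_S$ with the boundary map. By naturality of $\vf$ applied to the third map of (\ref{eq:woodKQ}), this equals $\vf_0$ of $\Sigma^{1+(1)}\Hyper\circ\beta^{-1}$ restricted along $\Sigma^{1+(1)}\kgl_S\simeq\vf_1\KGL_S$. Here the identification $\vf_1\KGL\simeq\Sigma^{1+(1)}\kgl$ is obtained via $\beta$, so $\beta^{-1}$ cancels and leaves $\Sigma^{1+(1)}\vf_0\Hyper$.
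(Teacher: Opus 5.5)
The paper gives no proof of this statement; it imports it from \cite{aro} and \cite[Theorem 4.3]{kro}. Judged on its own, your outline has a genuine gap exactly where the content lies.

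In the comparison step you claim that, for test objects $\Sigma^{s+(w)}X_+$ with $s,w\geq 0$, the long exact sequence for $C=\mathrm{cofib}(\eta\smash\kq_S)$ matches the one from (\ref{eq:woodKQ}). It does not. That sequence contains $[\Sigma^{s+(w)}X_+,\Sigma^{(1)}\kq_S]=[\Sigma^{s+(w-1)}X_+,\kq_S]$, and for $w=0$ the object $\Sigma^{s-(1)}X_+$ is not very effective. So adjunction does not identify this group with $[\Sigma^{s-(1)}X_+,\KQ_S]$. This is not an edge effect at $s=0$: it affects the whole weight-zero column, which is precisely the column that detects very effective covers.

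Equivalently, $\Sigma^{(1)}\kq_S\to\vf_0\Sigma^{(1)}\KQ_S$ is not an equivalence for formal reasons, since $\Sigma^{(1)}=\Sigma^{-1}\Sigma^{1+(1)}$ and $\vf_0$ does not commute with $\Sigma^{-1}$. Bott or $(8,4)$-periodicity only trades $\vf_0$ for $\vf_{-4}$. What is needed is an actual computation about $\KQ_S$, for instance $\vs_{-1}\KQ_S\simeq\ast$, i.e.\ $\kq_S\simeq\vf_{-1}\KQ_S$, from the very effective slices in \cite[Theorem 4.1]{kro} (extending \cite{bachmann.hermitian}). The paper itself invokes this input in the proof of Theorem~\ref{thm:pi0kq-integers}. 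Given it, consider the inclusions of very effective spectra into $\Sigma^{0-(1)}$(very effective spectra) into $(-1)$-very effective spectra. Since $\kq_S$ lies in the smallest of these, you get $\vf_0\Sigma^{(1)}\KQ_S\simeq\Sigma^{(1)}\kq_S$.

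Even then, your route to the conclusion needs repair.

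\textbf{Fibres, not cofibres.} The usable formal fact is that $\vf_0$ preserves fibres, in the sense that $\vf_0\mathrm{fib}(f)\simeq\vf_0\mathrm{fib}(\vf_0 f)$. Your statement about cofibre sequences is vacuous, since a cofibre of very effective spectra is already very effective. Hence $\Sigma^{(1)}\kq_S\simeq\vf_0 G$ with $G=\mathrm{fib}(\kq_S\to\kgl_S)$, and the theorem amounts to showing that $G$ is very effective.

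\textbf{Reduction to points.} You propose to commute $\vf_0$ with $i_s^\ast$ and leave this open. Within this paper that step is unavailable. Absoluteness of $\kq$ (Proposition~\ref{prop:kq-absolute}) is proved later using $\kq/\eta\simeq\kgl$, which is the present statement. I also believe \cite{aro} works only over fields of characteristic not $2$. The step is in any case unnecessary. $G$ is effective, so by \cite[Prop.~B.3]{bachmann-hoyois.norms} it suffices that each $i_s^\ast G=\mathrm{fib}(i_s^\ast\kq_S\to i_s^\ast\kgl_S)$ is very effective. Over a field, using Bachmann's detection of very effectivity of effective spectra on $\pis_{\ast+(0)}$, this reduces to surjectivity of $\pis_{0+(0)}i_s^\ast\kq_S\to\pis_{0+(0)}\kgl_{F_s}$. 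That surjectivity holds because the unit of $\kgl_{F_s}$ factors through $i_s^\ast\kq_S$. This is where the surjectivity you gesture at really enters.

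\textbf{The last claim.} The third map of the new sequence is its own connecting map, not literally $\vf_0$ of the old one. To identify it, compare the two maps out of the very effective $\Sigma^{1+(1)}\kgl_S$ after composing to $\Sigma^{1+(1)}\KQ_S$. By adjunction this composition is bijective on such maps. After that, your cancellation of $\beta$ goes through.
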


Implicit in the last statement of Theorem~\ref{thm:woodkq} is the form of the
(very) effective filtration on algebraic $K$-theory.
Let $\beta\in \pi_{1+(1)}\kgl_S$ denote the lift of the Bott element
$\beta\in \pi_{1+(1)}\KGL_S$.

\begin{theorem}[Voevodsky, Levine, Bachmann-Elmanto-Morrow]\label{thm:zeroslicecof-kgl}
  Let $S$ be a qcqs scheme, and $\KGL_S\in \SH(S)$ the absolute
  motivic ring spectrum representing homotopy algebraic $K$-theory.
  The very effective and effective filtrations coincide: $\vf_q\KGL_S\simeq \f_q\KGL_S$
  for all $q\in \Z$.
  The cofiber sequence
  \[ \f_1\KGL_S\to \f_0\KGL_S \to \s_0\KGL_S \to \Sigma \f_1\KGL_S\]
  is equivalent to the cofiber sequence obtained by multiplication with $\beta$ 
  \[ \Sigma^{1+(1)}\kgl_S\xrightarrow{\beta\smash \kgl_S}\kgl_S
    \to \HZ_S \to \Sigma^{2+(1)}\kgl_S\]
  in which $\kgl_S\to \HZ_S$ is the unique ring map to
  the integral Eilenberg-MacLane spectrum.
  The composition $\HZ_S\to \Sigma^{2+(1)}\kgl_S\to \Sigma^{2+(1)}\HZ_S$
  coincides with the unique nonzero map
  $\HZ\to \HZ/2\xrightarrow{\Sq^2}\Sigma^{1+(1)}\HZ/2
  \xrightarrow{\partial^2_\infty} \Sigma^{2+(1)} \HZ$.
\end{theorem}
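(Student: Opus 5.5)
The plan is to reduce every claim to the case of fields, and from there to Levine's and Voevodsky's computations, using the continuity and base change properties of the absolute spectrum $\KGL$.

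\emph{Effective filtration and the slice tower.} Write $\KGL_S$ as the pullback $p^\ast\KGL_{\Spec(\Z)}$ along the structure map $p\colon S\to\Spec(\Z)$, and use Bott periodicity $\Sigma^{1+(1)}\KGL_S\simeq\KGL_S$. Then $\Sigma^{q+(q)}\kgl_S$ is $q$-very effective, hence $q$-effective. The first task is to identify the cofiber of $\beta\colon\Sigma^{1+(1)}\kgl_S\to\kgl_S$ with $\HZ_S$. Over fields this is Levine's homotopy coniveau theorem, together with Voevodsky's identification $\s_0\unit\simeq\HZ$. Over a general qcqs base it is the Bachmann--Elmanto--Morrow result that $\KGL$ has slices $\Sigma^{q+(q)}\HZ$, where $\HZ$ is their absolute motivic cohomology spectrum.

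Granting the identification over fields, the rest follows by a standard argument:
\begin{enumerate}
\item By \cite[Prop.~B.3]{bachmann-hoyois.norms}, effectivity can be detected pointwise, after $i_s^\ast$.
\item Define $\f_q\KGL_S:=\Sigma^{q+(q)}\kgl_S$. This object is $q$-effective.
\item Its cofiber $\KGL_S/\f_q\KGL_S$ is a colimit of the pieces $\Sigma^{n+(n)}\HZ_S$ for $n<q$, so it lies in the right orthogonal of $\SH(S)_{\geq\Z\times\{q\}}$.
\item Checking that last claim reduces to $\Sigma^{n+(n)}\HZ_S$ for $n<q$, for which $\Sigma^{-q}$ of it is of slice type $\le -1$.
\end{enumerate}
This gives $\vf_q=\f_q$ and identifies the slice cofiber sequence with the sequence given by multiplication by $\beta$.

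\emph{The ring map and the boundary composite.} The map $\kgl_S\to\HZ_S$ is the unit-compatible map $\f_0\to\s_0$. Since $\s_0$ is lax monoidal, it is a ring map. Uniqueness follows because $[\kgl_S,\HZ_S]$ is computed through $\s_0\kgl_S=\HZ_S$, and $[\HZ,\HZ]_S=\Z$ in the relevant degree, so a ring map is forced to be the one sending unit to unit.

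For the composite $\HZ_S\to\Sigma^{2+(1)}\HZ_S$, I will argue as follows:
\begin{itemize}
\item The composite is stable under base change, so it is pulled back from $\Spec(\Z)$.
\item By Spitzweck's computation of the motivic Steenrod algebra over Dedekind domains, the group $[\HZ,\Sigma^{2+(1)}\HZ]$ is generated by $\partial^2_\infty\Sq^2\mathrm{pr}$ and is of order $2$.
\item The composite is nonzero because it restricts to a nonzero map over $\Spec(\Q)$ or over $\CC$. There, Voevodsky's computation of the first slice differential of $\KGL$, compared with topological $K$-theory via the differential $\Sq^3_{\Z}$ in the Atiyah--Hirzebruch spectral sequence, identifies the composite.
\end{itemize}

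The main obstacle is the absolute statement $\s_0\KGL_S\simeq\HZ_S$ over arbitrary qcqs $S$, for which I would simply invoke Bachmann--Elmanto--Morrow rather than reprove it.
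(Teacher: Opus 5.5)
Your skeleton matches the paper's. The paper's proof is a chain of citations: Voevodsky and Levine over fields, \cite[Section 4.1.2]{bem} for arbitrary qcqs $S$ and the relation to $\beta$, and base change from $\Spec(\Z)$ for the last sentence, where it quotes the proof of \cite[Lemma 8.4]{kro}.

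Where you depart from it, by supplying the $\Spec(\Z)$ case yourself, there is a gap. Spitzweck's determination of $\HZ/\ell\smash\HZ/\ell$ over a Dedekind domain is only available when $\ell$ is invertible on the base, and over $\Spec(\Z)$ no prime is. This is not a formality. A description of $\HZ/2\smash\HZ/2$ over $\Spec(\Z)$ would base change along $\Spec(\FF_2)\to\Spec(\Z)$ to one of the mod-$2$ Steenrod algebra in characteristic $2$, which is not known. So the claim $[\HZ_\Z,\Sigma^{2+(1)}\HZ_\Z]\iso\Z/2$ needs a different argument.

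One argument that works uses the low slices $\s_0\unit_\Z\simeq\HZ_\Z$ and $\s_1\unit_\Z\simeq\Sigma^{(1)}\HZ/2$ (cf.\ Remark~\ref{rem:pi1-sphere-z}). Note that $\Sigma^{2+(1)}\HZ_\Z$ is right orthogonal to $2$-effective spectra, and $\HZ_\Z$ is right orthogonal to $1$-effective ones. Hence the slice tower of $\unit_\Z$ gives an exact sequence
\[ [\Sigma\s_1\unit_\Z,\Sigma^{2+(1)}\HZ_\Z]\to[\HZ_\Z,\Sigma^{2+(1)}\HZ_\Z]\to H^{3,1}(\Z)=0. \]
Its left term is $[\HZ/2,\Sigma\HZ]\iso\Z/2\{\partial\}$, because $[\HZ_\Z,\Sigma^{s}\HZ_\Z]\iso H^{s,0}(\Z)$ vanishes for $s\neq 0$. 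So the group has order at most $2$. Your nontriviality check over $\Q$ (or $\CC$), applied both to the composite and to $\partial^2_\infty\Sq^2\pr$, then finishes the argument.

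There are two smaller points. First, your ``standard argument'' does not actually reduce the first part to fields. \cite[Prop.~B.3]{bachmann-hoyois.norms} detects effectivity on residue fields, but not membership in the right orthogonal. You need that $\kgl_S/\beta$, and hence $\KGL_S/\Sigma^{q+(q)}\kgl_S$, admits no nonzero maps from $1$-effective (respectively $q$-effective) spectra over $S$. That is not pointwise checkable; it is exactly what has to be imported from Bachmann--Elmanto--Morrow, as the paper does.

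Second, your uniqueness argument is circular: $[\kgl_S,\HZ_S]\iso[\HZ_S,\HZ_S]$ computes neither side. Argue instead that the cofiber of $\unit_S\to\kgl_S$ is $1$-effective, which \emph{is} checkable on residue fields given absoluteness of $\kgl$. Then $[\kgl_S,\HZ_S]\iso[\unit_S,\HZ_S]=H^{0,0}(S)$, and unitality pins down the ring map.
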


\begin{proof}
  The absolute
  motivic spectrum $\KGL_S$ was constructed by Voevodsky \cite{voevodsky.icm}.
  Representability of algebraic $K$-theory for regular schemes follows from \cite{mv},
  the general case is \cite{cisinski}. Its $E_\infty$-ring structure may be
  deduced from \cite{gepner-snaith} or \cite{rso.bott}.
  The statement on the (very) effective slice filtrations and the identification
  of the zero slice of $\KGL_S$ is due to Voevodsky over fields of characteristic zero
  \cite{voevodsky.zero} and due to Levine over general perfect fields
  \cite{levine.zero}; perfectness can be removed by a straightforward argument.
  The extension to qcqs schemes and the relation to Bott multiplication
  is stated in \cite[Section 4.1.2]{bem}.
  The final sentence follows from the corresponding statement over $\Spec(\Z)$,
  which is given in the proof of \cite[Lemma 8.4]{kro}.
\end{proof}

Let $\unit_S\to \kq_S$ denote the unit map, inducing
the graded ring homomorphism $\pi_0\unit_S\to \pi_0\kq_S$.
The ring maps $\kq_S\to\kgl_S\to \HZ_S$ define similar unit maps and homomorphisms.

\begin{lemma}\label{lem:pi0HZ-integers}
  The canonical map $\KMW_\star(\Z)\to \pi_{0-(\star)}\HZ_\Z$ induces
  an isomorphism
  $\KMil_\star(\Z)\iso \pi_{0-(\star)}\HZ_\Z$.
  Moreover $\pi_{n}\HZ_\Z=0$ for $n<0$.
  The same holds if $\Z$ is replaced with $\Z_{(p)}$ for $p>5$.
  If $p\leq 5$, the homomorphism
  $\KMil_\star(\Z_{(p)})\to \pi_{0-(\star)}\HZ_{\Z_{(p)}}$ is an isomorphism
  for $\star < 2$ and surjective for $\star\geq 2$.
\end{lemma}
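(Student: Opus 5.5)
We identify $\pi_{0-(w)}\HZ_\Z = [S^{0-(w)},\HZ_\Z] = H^{w,w}(\Z)$, the diagonal motivic cohomology of $\Spec(\Z)$. The statement $\pi_n\HZ_\Z=0$ for $n<0$ says $H^{p,q}(\Z)=0$ whenever $p>q$.

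Over a Dedekind domain $D$ of mixed characteristic, Spitzweck's motivic cohomology spectrum computes motivic cohomology via Bloch–Levine higher Chow groups. It satisfies the localization sequence for $\Spec(\FF_p)\to \Spec(D)\leftarrow \Spec(\mathrm{Frac}\,D)$:
\[ \dotsm\to \bigoplus_{p} H^{m-2,w-1}(\FF_p)\to H^{m,w}(D)\to H^{m,w}(\mathrm{Frac}\,D)\to \bigoplus_{p} H^{m-1,w-1}(\FF_p)\to\dotsm \]
Over fields, $H^{m,w}(F)=0$ for $m>w$, and $H^{w,w}(F)\iso\KMil_w(F)$ by Nesterenko–Suslin–Totaro.

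The vanishing follows directly. For $m>w$, the outer terms $H^{m-2,w-1}(\FF_p)$ and $H^{m,w}(\Q)$ both vanish, since $m-2>w-1$ and $m>w$. Hence $H^{m,w}(\Z)=0$. The same argument works for $\Z_{(p)}$, where only one closed point occurs.

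In the diagonal case $m=w$, the sequence becomes
\[ 0\to H^{w,w}(\Z)\to \KMil_w(\Q)\xrightarrow{\partial}\bigoplus_p \KMil_{w-1}(\FF_p), \]
because $H^{w-2,w-1}(\FF_p)=0$, being strictly above the diagonal. Under the Nesterenko–Suslin–Totaro identification, the boundary map is the tame symbol. So $H^{w,w}(\Z)$ is the kernel of the tame symbols on $\KMil_w(\Q)$.

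Milnor's computation of $\KMil(\Q)$ gives a split exact sequence
\[ 0\to \KMil_w(\Z)\to \KMil_w(\Q)\to \bigoplus_p \KMil_{w-1}(\FF_p)\to 0, \]
more precisely with $\KMil_w(\Q)\iso \KMil_w(\mathbb{R})$-part $\oplus$ residues. This identifies the kernel with the subgroup generated by $[-1]^w$, which is $\Z/2$ for $w\geq1$ and $\Z$ for $w=0$. By Example~\ref{ex:kmil-kwitt-integers}, this subgroup is exactly $\KMil_w(\Z)$, and the map $\KMil_w(\Z)\to \KMil_w(\Q)$ is injective because $[-1]^w$ is detected by the real embedding. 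For $w<0$ both sides vanish. Compatibility with the map induced from $\KMW_\star(\Z)$ holds because the unit map $\unit\to\HZ$ kills $\eta$ and sends $[u]$ to the class of $u$ in $H^{1,1}=\mathcal{O}^\times$.

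For $\Z_{(p)}$, the kernel of the single tame symbol $\partial_p$ on $\KMil_w(\Q)$ is the image of $\KMil_w(\Z_{(p)})$; this is Gersten exactness for the DVR, or Dennis–Stein / van der Kallen. The remaining point is injectivity of $\KMil_w(\Z_{(p)})\to\KMil_w(\Q)$. By van der Kallen and Dennis–Stein, as recalled in Example~\ref{ex-kmil-2-local-integers}, this holds for $p>5$, which gives the isomorphism.

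For $p\leq5$, we only get surjectivity onto the kernel, which equals $H^{w,w}$. In degrees $w\leq1$, injectivity holds anyway: $\KMil_0=\Z$ and $\KMil_1(\Z_{(p)})=\Z_{(p)}^\times\hookrightarrow\Q^\times$. This gives the claimed isomorphism for $\star<2$ and surjectivity for $\star\geq2$.

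The main obstacle is the foundational input. One must justify that $\HZ$ over $\Z$ and $\Z_{(p)}$ satisfies this localization/Gysin sequence, with the boundary identified as the tame symbol. I would cite Spitzweck's construction and its comparison with Levine's localization for higher Chow groups over Dedekind domains, and the Nesterenko–Suslin–Totaro compatibility with residue maps.
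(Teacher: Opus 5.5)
Your route differs from the paper's. You use the localization sequence for $\coprod_p\Spec\FF_p\hookrightarrow\Spec\Z\hookleftarrow\Spec\Q$ together with the Milnor--Tate computation of $\KMil_\star(\Q)$. The paper instead imports the groups $\pi_{s-(\star)}\HZ_\Z$, the vanishing, and the ring structure from \cite[Theorem 10.1, Lemma 10.2]{kro}; it then only checks that $\KMil_\star(\Z)\to\pi_{0-(\star)}\HZ_\Z$ is nonzero in positive degrees, by passing to $\mathbb{R}$.

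Your bookkeeping of which bidegrees lie above the diagonal is off twice. The first slip is in the vanishing step. For $m=w+1$, the term $H^{m-2,w-1}(\FF_p)=H^{w-1,w-1}(\FF_p)=\KMil_{w-1}(\FF_p)$ lies \emph{on} the diagonal; it is $\Z$ for $w=1$ and $\FF_p^\times$ for $w=2$. The sequence therefore only gives $H^{w+1,w}(\Z)\iso\mathrm{coker}\bigl(\partial\colon\KMil_w(\Q)\to\bigoplus_p\KMil_{w-1}(\FF_p)\bigr)$. As written, your argument would prove $\pi_{-1-(1)}\HZ_D=H^{2,1}(D)=\mathrm{Pic}(D)=0$ for every mixed-characteristic Dedekind domain $D$, which is false for $\Z[\sqrt{-5}]$ (cf.\ the introduction). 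You can repair this with the surjectivity half of the Milnor--Tate sequence you quote later. For $w=1$ it says $\Z$ is a PID; for $w=2$ it is Tate's surjectivity of $K_2(\Q)\to\bigoplus_p\FF_p^\times$; over $\Z_{(p)}$ both are elementary. But this has to be said explicitly.

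The second slip is more serious. In the diagonal step, $H^{w-2,w-1}(\FF_p)$ lies \emph{below} the diagonal, where vanishing over fields gives nothing. It is zero for $w\le 2$ for weight reasons. It is also zero for $w\ge 4$, but only by the computation of motivic cohomology of finite fields, which is concentrated in $H^{0,0}$ and the $H^{1,q}$. For $w=3$, however, it equals $H^{1,2}(\FF_p)\iso K_3(\FF_p)\iso\Z/(p^2-1)\neq 0$. So your sequence only shows that $H^{3,3}(\Z)\to\KMil_3(\Q)\iso\Z/2$ is surjective, with kernel $\mathrm{coker}\bigl(H^{2,3}(\Q)\to\bigoplus_p H^{1,2}(\FF_p)\bigr)$. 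The vanishing of this cokernel, i.e.\ $H^{3,3}(\Z)\iso\Z/2$, is a genuine arithmetic fact about $\Z$. It does not follow formally from the localization sequence, and your proposal does not supply it.

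The same gap affects your $\Z_{(p)}$ claims in degree $\star=3$, including surjectivity for $p\le 5$. There you identify the kernel of $\partial_p$ with $H^{3,3}(\Z_{(p)})$, which presupposes the unproven injectivity $H^{3,3}(\Z_{(p)})\hookrightarrow H^{3,3}(\Q)$. This is exactly the input the paper takes from \cite[Theorem 10.1]{kro}. Once you cite it, or an equivalent computation of $H^{\ast,\ast}(\Z)$, your argument closes. At that point, though, it rests on the same external computation as the paper's.
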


\begin{proof}
  Since $\eta\in \pi_{0-(1)}\unit_\Z$ maps to $0\in \pi_{0-(1)}\HZ_\Z=0$
  (hence over
  any base scheme $S$), the canonical map
  $\KMW_\star(\Z)\to \pi_{0-(\star)}\HZ_\Z$ induces a ring homomorphism
  $\KMil_\star(\Z)\to \pi_{0-(\star)}\HZ_\Z$.
  Its source has been described as a graded ring in
  \ref{ex:kmil-kwitt-integers},
  its target is given as a graded group for example in \cite[Theorem 10.1]{kro},
  which also supplies the last statement.
  Together with \cite[Lemma 10.2]{kro}, it follows that the target graded ring
  is abstractly isomorphic to the source graded ring.
  It remains to prove that the given ring homomorphism is a concrete isomorphism.
  This follows in degree $\star=0$, just because it is a ring homomorphism.
  It remains to observe that the homomorphism is nontrivial in degrees $\star >0$,
  which can be seen by base change from $\Z$ to the field of rational or real numbers.

  The statements on $\Z_{(p)}$ may be deduced as follows.
  The vanishing $\pi_{s}\HZ_R=0$ for $s<0$
  holds more generally for any subring $R\subset \Q$, just
  as the isomorphism statement for $\star \neq 2$.
  Already
  the image of $\KMil_\star(\Z_{(p)})\to\KMil_\star(\Q)$ is described
  in \cite[Lemma 2.1]{milnor.inv}, which provides
  surjectivity. Matsumoto's Theorem and the
  injectivity of $K_2(\Z_{(p)})\to K_2(\Q)$ given in~\cite{dennis-stein}
  imply the isomorphism
  for $p>5$ with \cite{vdkallen}.
  Injectivity in the final sentence is obvious, the
  surjectivity can be deduced from \cite{kerz.improved}.
\end{proof}

The (very) effective filtration on $\kgl$ allows to lift information 
from motivic cohomology to the
higher homotopy groups of $\kgl$. For the following, $\pi_0\kgl_R$ and
$\pi_1\kgl_R$ are sufficient.

\begin{proposition}\label{prop:pi0kgl-integers}
  Let $R\subset \Q$ be a subring.
  Then the homomorphism
  $\pi_{s-(\star)}\kgl_R\to \pi_{s-(\star)}\HZ_R$
  induced by the canonical map $\kgl_R\to \s_0\kgl_R$
  is an isomorphism for $s\leq 0$ and surjective for $s=1$.
  The kernel of $\pi_{1-(\star)}\kgl_R\to \pi_{1-(\star)}\HZ_R$
  is isomorphic to the cokernel of
  $\pi_{2-(\star)}\HZ_R\to \pi_{0-(\star)}\Sigma^{(1)}\HZ_R$ induced by the
  composition
  \[ \HZ\to \HZ/2\xrightarrow{\Sq^2}\Sigma^{1+(1)}\HZ/2
    \xrightarrow{\partial^2_\infty} \Sigma^{2+(1)} \HZ.\]
\end{proposition}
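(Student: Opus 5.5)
The proof will use the cofiber sequence of Theorem~\ref{thm:zeroslicecof-kgl},
\[ \Sigma^{1+(1)}\kgl_R\xrightarrow{\beta}\kgl_R\to \HZ_R\to \Sigma^{2+(1)}\kgl_R, \]
together with the resulting long exact sequence on $\pi_{s-(\star)}$. Note that $\pi_{s-(w)}\Sigma^{1+(1)}\kgl_R\iso \pi_{(s-1)-(w+1)}\kgl_R$, so the term before $\kgl_R$ has connectivity index shifted down by one. The first step is a connectivity bound. The spectrum $\kgl_R$ is very effective, hence connective, and $\Spec(R)$ has Krull dimension at most $1$. By \cite[Theorem V]{druzhinin.conn}, $\pi_{s+(\star)}\kgl_R=0$ for $s<-1$. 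This bound will be sharpened to $s<0$ by induction using the slice tower.

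For the sharpening, iterate the sequence. The spectrum $\f_q\kgl_R\simeq\Sigma^{q+(q)}\kgl_R$ has $\pi_{s-(\star)}$ equal to $\pi_{(s-q)-(\star)}\kgl_R$ up to reindexing of weight, and the slices are $\Sigma^{q+(q)}\HZ_R$. By Lemma~\ref{lem:pi0HZ-integers} (its statement $\pi_n\HZ_R=0$ for $n<0$ holds for all subrings $R\subset\Q$), each slice $\Sigma^{q+(q)}\HZ_R$ has vanishing $\pi_{s}$ for $s<q$. The long exact sequence then gives $\pi_s\kgl_R\iso\pi_s(\kgl_R/\f_q)$ for $s<q-1$ or so, using the bound $s<-1$ on $\f_q\kgl_R$ shifted by $q$. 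Here the first Druzhinin bound for $\f_q\kgl_R$ means vanishing for $s<q-1$. The finite stages $\kgl_R/\f_q$ are extensions of slices with $\pi_s=0$ for $s<0$, so $\pi_s\kgl_R=0$ for $s<0$.

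Given this vanishing, the statement follows from the long exact sequence. For $s\le 0$, the term $\pi_{s-(\star)}\Sigma^{1+(1)}\kgl_R$ is a $\pi_{s-1}$ of $\kgl_R$ and hence vanishes. The next term $\pi_{(s-1)-(\star)}\Sigma^{1+(1)}\kgl_R$ is a $\pi_{s-2}$ of $\kgl_R$ and also vanishes. This gives the isomorphism for $s\le 0$. For $s=1$, the cokernel of $\pi_1\kgl\to\pi_1\HZ$ injects into $\pi_0\Sigma^{1+(1)}\kgl$, which is a $\pi_{-1}$ and hence zero; so the map is surjective. The kernel is the image of $\pi_{1-(\star)}\Sigma^{1+(1)}\kgl_R\iso\pi_{0-(\star+1)}\kgl_R$ under $\beta$. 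This image is the cokernel of the boundary $\pi_{2-(\star)}\HZ_R\to\pi_{1-(\star)}\Sigma^{2+(1)}\kgl_R$. By the $s=0$ case, $\pi_{1-(\star)}\Sigma^{2+(1)}\kgl_R\iso\pi_{0-(\star)}\Sigma^{(1)}\kgl_R$ identifies with $\pi_{0-(\star)}\Sigma^{(1)}\HZ_R$ via the map to the zero slice. The boundary map composed with this identification is $\HZ\to\Sigma^{2+(1)}\kgl\to\Sigma^{2+(1)}\HZ$, which by the last sentence of Theorem~\ref{thm:zeroslicecof-kgl} is $\partial^2_\infty\Sq^2$ composed with reduction mod $2$.

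The main obstacle is the connectivity improvement from $s<-1$ to $s<0$, because Druzhinin's theorem only gives the weaker bound over a one-dimensional base. If the slice-tower argument above fails to converge cleanly, the alternative is a direct argument. The mixed-characteristic $R\subset\Q$ is a localization of $\Z$, and one can use the localization sequence for $\Spec\Q\to\Spec R\leftarrow$ closed points together with the field case. Over each field, Morel's connectivity gives $\pi_s=0$ for $s<0$. The closed-point contributions are $\Sigma^{1+(1)}$-shifted after purity for $\KGL$, which exists since $\KGL$ is absolute. These shifted contributions only affect $\pi_s$ for $s\ge 0$, giving the required vanishing.
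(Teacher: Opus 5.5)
Your proof is correct and is essentially the paper's. Sharpening Druzhinin's bound from $s<-1$ to $s<0$ only needs the $q=1$ stage of your slice-tower argument (the Bott cofiber sequence together with $\pi_{-1}\HZ_R=0$), and the remaining long exact sequence analysis, including the identification of the boundary via the last sentence of Theorem~\ref{thm:zeroslicecof-kgl}, is what the paper does.

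Two small corrections: the boundary lands in $\pi_{2-(\star)}\Sigma^{2+(1)}\kgl_R\iso\pi_{0-(\star)}\Sigma^{(1)}\kgl_R$ rather than $\pi_{1-(\star)}\Sigma^{2+(1)}\kgl_R$. Also, your fallback localization argument should be discarded, since $i^!\kgl\simeq\Sigma^{-1-(1)}i^\ast\kgl$ shifts the closed-point contributions down into $\pi_{-1}$, not up.
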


\begin{proof}
  Consider the long exact sequence
  \[ \dotsc \to \pi_0 \Sigma^{1+(1)}\kgl_R\to \pi_0\kgl_R \to \pi_0\HZ_R\to \pi_{-1}\Sigma^{1+(1)}\kgl_Z \to \pi_{-1}\kgl_R\to 0 \]
  induced by the cofiber sequence from Theorem~\ref{thm:zeroslicecof-kgl}.
  The last entry $0=\pi_{-1}\HZ_R$, as observed
  in Lemma~\ref{lem:pi0HZ-integers},
  can be read off for example from \cite[Theorem 10.1]{kro}.
  Since a very effective motivic spectrum $\E$ over a locally Noetherian
  base of Krull dimension $d$ satisfies $\pi_n\E=0$ for $n<-d$
  by \cite{druzhinin.conn}, then $\pi_n\kgl_R=0$ for all $n<-1$.
  In particular,
  $\pi_{-1}\Sigma^{1+(1)}\kgl_R=\pi_{-2}\Sigma^{(1)}\kgl_R = 0$, giving
  $\pi_{-1}\kgl_R=0$.
  This implies $\pi_0\Sigma^{1+(1)}\kgl_R=0$, whence the first statement.
  The group
  $\pi_1\Sigma^{1+(1)}\kgl_R\iso \pi_0\Sigma^{(1)}\kgl_R\iso
  \pi_0\Sigma^{(1)}\HZ_R$, and 
  the form of the composition
  $\HZ_R\to \Sigma^{2+(1)}\kgl_R\to\Sigma^{2+(1)}\HZ_R$ has been 
  specified in Theorem~\ref{thm:zeroslicecof-kgl}.
\end{proof}

The passage from algebraic $K$-theory to hermitian $K$-theory
involves the Wood cofiber sequence. To
analyze it in detail, the slice
spectral sequence is helpful.
The effective slices of $\kq_S$ for $S$ essentially smooth over
a Dedekind scheme have been determined in
\cite[Corollary 4.4]{kro}.
Recall from \cite[Lemma 3.13]{rso.oneline} that the slice completion
of $\kq$ is naturally equivalent to the $\eta$-completion $\kq^{\wedge}_\eta$,
because $\kq/\eta\simeq \kgl$ is slice complete. This
involves Theorem~\ref{thm:woodkq} and Theorem~\ref{thm:zeroslicecof-kgl}.
Hence the slice spectral sequence for $\kq$ converges to the homotopy
groups/sheaves of $\kq^{\wedge}_\eta$, one corner of the $\eta$-arithmetic
square
\begin{equation}\label{eq:eta-kq} \begin{tikzcd}
    \kq \ar[r] \ar[d] & \kw:=\kq[\eta^{-1}] \ar[d] \\
    \kq^\wedge_\eta \ar[r]  & \kq^\wedge_\eta[\eta^{-1}] 
  \end{tikzcd}
\end{equation}
for $\kq$.

\begin{theorem}\label{thm:pi0kq-integers}
  The canonical map $\KMW_\star(\Z)\to \pi_{0-(\star)}\kq_\Z$ is an isomorphism
  of graded rings.
  Moreover $\pi_{n}\kq_\Z=0$ for $n<0$.
  The same holds if $\Z$ is replaced with $\Z_{(p)}$ for $p>5$.
\end{theorem}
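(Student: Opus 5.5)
We use the Wood cofiber sequence of Theorem~\ref{thm:woodkq} over $R=\Z$ (or $\Z_{(p)}$, $p>5$),
\[ \Sigma^{(1)}\kq_R\xrightarrow{\eta}\kq_R\to\kgl_R\to\Sigma^{1+(1)}\kq_R, \]
together with the computation of $\pi_s\kgl_R$ for $s\leq 1$ in Proposition~\ref{prop:pi0kgl-integers} and Lemma~\ref{lem:pi0HZ-integers}. Vanishing first: since $\kq_R$ is very effective and $R$ has Krull dimension $1$, \cite{druzhinin.conn} gives $\pi_n\kq_R=0$ for $n<-1$. For $n=-1$ the long exact sequence in the Milnor-Witt stem reads
\[ \pi_{-1}\Sigma^{(1)}\kq_R\xrightarrow{\eta}\pi_{-1}\kq_R\to\pi_{-1}\kgl_R=0, \]
so $\eta$ acts surjectively on $\pi_{-1-(\star)}\kq_R$. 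Iterating, every element of $\pi_{-1-(w)}\kq_R$ is $\eta^m$-divisible for all $m$. To conclude that this group vanishes, we would show it is $\eta$-complete, so that infinitely $\eta$-divisible classes are zero. Concretely, we compare with the slice spectral sequence, which converges to $\pi_\ast\kq^\wedge_\eta$ by the square~(\ref{eq:eta-kq}), and separately with $\kw=\kq[\eta^{-1}]$. The slices of $\kq$ from \cite[Corollary 4.4]{kro} are sums of $\HZ$ and $\HZ/2$ suspensions. By Lemma~\ref{lem:pi0HZ-integers} these have no negative homotopy over $R$, so $\pi_{-1}\kq^\wedge_\eta=0$. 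For $\kw$, the negative homotopy is governed by the Witt-group-type invariant $\pi_{-1}$, which should be $W$-cohomology in degree $1$ of $R$, i.e.\ a Pic-type group. This vanishes for $\Z$ and $\Z_{(p)}$, as does the corresponding term for $\kq^\wedge_\eta[\eta^{-1}]$. The Mayer–Vietoris sequence of the arithmetic square then yields $\pi_{-1}\kq_R=0$.

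Degree zero: the next part of the sequence is
\[ \pi_{0}\Sigma^{(1)}\kq_R\xrightarrow{\eta}\pi_0\kq_R\to\pi_0\kgl_R\to\pi_{-1}\Sigma^{(1)}\kq_R=0. \]
By Proposition~\ref{prop:pi0kgl-integers} and Lemma~\ref{lem:pi0HZ-integers}, $\pi_{0-(\star)}\kgl_R\iso\KMil_\star(R)$. Together with Example~\ref{ex:kmil-kwitt-integers}, this gives, in each weight, a short exact sequence
\[ \pi_{0-(\star+1)}\kq_R/(\text{image of }\pi_1\kgl_R)\xrightarrow{\eta}\pi_{0-(\star)}\kq_R\to\KMil_\star(R)\to0, \]
compatible with $0\to\KWitt_{\star+1}\to\KMW_\star\to\KMil_\star\to0$ via the map of Theorem~\ref{thm:milnor-witt-ktheory} to $\kq$. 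The boundary $\pi_1\kgl_R\to\pi_0\Sigma^{1+(1)}\kq_R$ composed with the forgetful map is the hyperbolic map by Theorem~\ref{thm:woodkq}. Using the description of $\pi_1\kgl_R$ in Proposition~\ref{prop:pi0kgl-integers}, we would identify its image as exactly $\hyp$-multiples. Then $\pi_{0}\kq_R/\hyp$ is what $\eta$ injects from, and the problem reduces to showing $\KWitt_\star(R)\iso\pi_{0-(\star)}\kq_R/\hyp\iso\pi_{0-(\star)}\kw_R$.

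This last identification, $\pi_{0-(\star)}\kw_R\iso W(R)[\eta^{\pm1}]\iso\Z[\eta^{\pm1}]$ with $W(\Z)=\Z$, is the main obstacle. Our first attempt is to use $\kw\simeq\kq[\eta^{-1}]$ and the known homotopy of $\KW$ from \cite{chn}, namely $\pi_0\KW_R=W(R)$ in all weights for Dedekind $R$, together with $\kw$ being the connective cover. The latter would require the cellularity and connectivity control announced in the introduction; failing that, we would use the slice spectral sequence, where in weight $\ll0$ the $\eta$-multiplication is visible. A five-lemma argument then gives the isomorphism weight by weight, by downward/upward induction, starting from large negative $\star$ (pure $\eta$-powers) and from large positive $\star$, where $\KMil$ and $\KWitt$ stabilise. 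The statement for $\Z_{(p)}$, $p>5$, is identical, since Lemma~\ref{lem:pi0HZ-integers} applies and $W(\Z_{(p)})\iso W(\Q_{\geq})$ contributions match $\KWitt$.
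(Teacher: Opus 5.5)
Your skeleton (the Wood sequence compared with $0\to\KWitt_{w+1}\to\KMW_w\to\KMil_w\to 0$, then a five-lemma induction) is the right one. However, both load-bearing steps are missing.

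\textbf{Vanishing of $\pi_{-1}\kq_R$.} Your arithmetic-square argument is circular. Since $\eta$ is surjective on $\pi_{-1}\kq_R$, the map $\pi_{-1}\kq_R\to\pi_{-1}\kw_R=\colim_\eta\pi_{-1}\kq_R$ is surjective, and in fact bijective (see below). So showing $\pi_{-1}\kw_R=0$ is the original problem. Your identification of this group with a ``Pic-type'' Witt-cohomology group is a guess with no argument behind it. The Mayer--Vietoris step would also need surjectivity of $\pi_0\kq^\wedge_\eta\oplus\pi_0\kw_R\to\pi_0\kq^\wedge_\eta[\eta^{-1}]$, which you do not address.

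The idea you missed is that the Wood sequence continues one step further left: $\pi_0\kq_R\to\pi_0\kgl_R\to\pi_{-1}\Sigma^{(1)}\kq_R\xrightarrow{\eta}\pi_{-1}\kq_R\to 0$. The map $\pi_0\kq_R\to\pi_0\kgl_R\iso\KMil_\star(R)$ is surjective because it is already hit by $\KMW_\star(R)$. Hence $\eta$ is an \emph{isomorphism} on $\pi_{-1}\kq_R$, and it suffices to kill a single weight. The paper takes $\pi_{-1-(1)}$. Since $\vs_{-1}\KQ_R\simeq\ast$, one has $\kq_R\simeq\vf_{-1}\KQ_R$, so $\pi_{-1-(1)}\kq_R\iso\pi_{-1-(1)}\KQ_R$. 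This group vanishes because the first symplectic $K$-group $\pi_{-1-(2)}\KQ_R$ is zero and the hyperbolic map $K_0(R)\to KSp_0(R)$ is injective.

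\textbf{Degree zero.} The crux is the kernel of $\eta\colon\pi_{0-(w+1)}\kq_R\to\pi_{0-(w)}\kq_R$, that is, the image of $\pi_1\kgl_R/\pi_1\kq_R$ in $\pi_0\Sigma^{(1)}\kq_R$. Your sentence ``we would identify its image as exactly $\hyp$-multiples'' is precisely what must be proved. It requires knowing the image of $\pi_1\kq_R\to\pi_1\kgl_R$, which the paper obtains from the slice filtration in four steps.
\begin{enumerate}
\item $\pi_1\kq_R\to\pi_1\s_0\kq_R$ is surjective, via the injection $\pi_1\HZ_R\to\pi_1\HZ_\Q$ and the corresponding result over $\Q$ from \cite{aro}.
\item The forgetful map is an equivalence on $\s_0$ and is the connecting map $\Sigma^{(1)}\HZ/2\to\Sigma^{1+(1)}\HZ$ on $\s_1$.
\item By Proposition~\ref{prop:pi0kgl-integers}, the kernel of $\eta$ is therefore the image of $\pi_{0-(w)}\Sigma^{(1)}\HZ_R\to\pi_{0-(w+1)}\kq_R$. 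This map is injective for $w\le 2$, because the incoming $\partial^2_\infty\Sq^2$-differential has zero source. It is zero for $w>2$, where $\hyp\colon\KMil_{w+1}(R)\to\KMW_{w+1}(R)$ also vanishes.
\item This produces diagram~(\ref{eq:kmw-wood}).
\end{enumerate}

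Your proposed base case via $\kw$ is also wrong. For $w>0$ the canonical map $\pi_{0-(w)}\kq_R/\hyp\to\pi_{0-(w)}\kw_R\iso W(R)$ has image $I^w(R)$; over $\Z$ it is multiplication by $(-2)^w$. So $\KWitt_w(R)\iso\pi_{0-(w)}\kw_R$ is not the right identification. In addition, an induction ``from large positive $\star$'' has no starting point.

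The correct base is immediate from very effectivity: $\pi_{0-(w)}\kq_R\iso\pi_{0-(w)}\KQ_R$ for $w\le 0$, which is $GW(R)$ or $W(R)$. These groups agree with $\KMW_w(R)$ by Example~\ref{ex:kmw-integers} for $\Z$, and by \cite{gsz} and \cite{schlichting-rogers} for $\Z_{(p)}$. Your remark about $W(\Z_{(p)})$ does not supply this. One then inducts upward in $w$ through~(\ref{eq:kmw-wood}). For $\Z_{(p)}$, exactness of the upper row is proved along the way in that induction, not taken from Example~\ref{ex:kmil-kwitt-integers}.
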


\begin{proof}
  Let $R\subset \Q$ be a subring as indicated in the statement.
  The vanishing $\pi_{n}\kq_R=0$ for $n<-1$ follows as above
  from \cite[Theorem V]{druzhinin.conn}.
  Consider the end of the long exact sequence
  \[ \dotsc \to \pi_0 \Sigma^{(1)}\kq_R\to \pi_0\kq_R \to \pi_0\kgl_R\to \pi_{-1}\Sigma^{(1)}\kq_R \to \pi_{-1}\kq_R\to 0 \]
  on homotopy groups induced by the Wood cofiber sequence \ref{thm:woodkq}.
  The last entry $0=\pi_{-1}\kgl_R$ uses the
  connectivity of $\kgl_R$  already established in
  Proposition~\ref{prop:pi0kgl-integers}.
  The latter implies with Lemma~\ref{lem:pi0HZ-integers}
  that the composition $\KMW(R)\to\pi_0\kq_R\to\pi_0\kgl_R$
  is surjective, whence multiplication with $\eta$ is an isomorphism
  $\pi_{-1}\Sigma^{(1)}\kq_R\to\pi_{-1}\kq_R$.
  To prove that $\pi_{-1}\kq_R=0$, it thus suffices to prove that
  $\pi_{-1-(w)}\kq_R=0$
  for one weight $w$. The weight $w=-1$ works.
  In fact, the determination of the very effective slices of $\KQ_R$
  in \cite[Theorem 4.1]{kro} (extending Bachmann's
  computation \cite{bachmann.hermitian} for fields
  of characteristic not two) shows in particular $\kq_R\simeq \vf_{-1}\KQ_R$,
  because $\vs_{-1}\KQ_R\simeq \ast$.
  Hence using very effectivity,
  $\pi_{-1-(1)}\kq_R\iso \pi_{-1-(1)}\vf_{-1}\KQ_R\iso \pi_{-1-(1)}\KQ_R$.
  Since the first symplectic $K$-group $\pi_{-1-(2)}\KQ_R$
  of $R$ vanishes and the
  hyperbolic map injects $K_0(R)=\pi_{-1-(1)}\KGL_R\iso \Z$ into the
  zeroth symplectic $K$-group $\pi_{-2-(2)}\KGL_R$, the group
  $\pi_{-1-(1)}\KQ_R=0$. Here the representability of symplectic
  $K$-theory by $\KQ$ in weights congruent to $2$ modulo $4$ is used,
  deducible for example from \cite[Remark R.7]{9.III}.
  Thus $\pi_{-1}\kq_R=0$. 

  In order to specify the homomorphism $\pi_1\kq_R\to\pi_1\kgl_R$, 
  the effective slice spectral sequence will be used.
  The slice filtration on $\pi_{0+(\star)}\kq_R$
  may be analyzed as follows, based on the computation of the slices
  in \cite[Corollary 4.4]{kro}.
  Using very effectivity and representability of
  the Grothendieck-Witt group again, the canonical homomorphism
  $\KMW_0(R)=\pi_{0+(0)}\KQ_R=\pi_{0+(0)}\kq_R \to \pi_{0+(0)}\s_0\kq_R=H^{0,0}(R)=\Z$
  coincides with the rank homomorphism, as one sees by passage to
  an algebraic closure of $\Q$.
  Moreover $\pi_{1+(0)}\s_0\kq_R= H^{-1,0}(R)=0$, whence
  $\pi_{0+(0)}\f_1\kq_R$ identifies with
  the kernel of the rank homomorphism, also known as the fundamental ideal
  $I(R)$.
  The multiplicative structure of the slice
  filtration \cite{zbMATH05870074,grso} then provides inclusions
  $I^q(R) \subset \pi_{0+(0)}\f_q\kq_R$ for all $q\geq 0$. 
  Localization for the classical Grothendieck-Witt ring, generalized
  for example in \cite[Theorem 7]{9.III}, supplies an
  injection $\pi_{0+(0)}\kq_R\hookrightarrow \pi_{0+(0)}\kq_\Q$, hence
  inclusions $\pi_{0+(0)}\f_q\kq_R \subset \pi_{0+(0)}\f_q\kq_\Q = I^q(\Q)$
  for all $q\geq 0$.
  In particular, the slice filtration on $\pi_{0+(0)}\kq_R$ is separated
  by \cite{arason-pfister.krull}.

  The analogous localization sequence for motivic cohomology gives
  injections $\pi_1\HZ_R\to \pi_1\HZ_\Q$. Since
  $\pi_{1}\kq_\Q\to \pi_{1}\s_0\kq_\Q$ is surjective by the paragraph
  after \cite[Prop.~27]{aro}, the same is true for $R$ in place of $\Q$.
  The forgetful map $\kq\to \kgl$ induces an equivalence
  on zero slices and
  the connecting map $\Sigma^{(1)}\HZ/2\to \Sigma^{1+(1)}\HZ$ on first
  slices by \cite[Corollary 4.4]{kro} and the known behaviour
  over $\Q$ given in \cite{RO:slices}.
  The description of $\pi_1\kgl_R$ given in
  Proposition~\ref{prop:pi0kgl-integers}
  then shows that the image of the homomorphism
  $\pi_1\kgl_R/\pi_1\kq_R\to \pi_0\Sigma^{(1)}\kq_R$ 
  coincides with the image of a homomorphism
  $\pi_0\Sigma^{(1)}\HZ_R\to \pi_{0}\Sigma^{(1)}\kq_R$
  fitting into a commutative diagram
  \begin{equation}\label{eq:kmw-wood}
    \begin{tikzcd}
      \KMil_{w+1}(R)\ar[r,"\hyp"] \ar[d] &
      \KMW_{w+1}(R) \ar[r,"\eta"]\ar[d] & \KMW_w(R) \ar[r] \ar[d] &
      \KMil_w (R)  \ar[r] \ar[d] & 0 \\
      \pi_{0-(w)}\Sigma^{(1)}\HZ_R \ar[r] & 
      \pi_{0-(w+1)}\kq_R  \ar[r,"\eta"] &
      \pi_{0-(w)}\kq_R \ar[r] & \pi_{0-(w)}\kgl_R \ar[r] & 0
    \end{tikzcd}
  \end{equation}
  in which the outer vertical homomorphisms are surjective by
  Proposition~\ref{prop:pi0kgl-integers} and Lemma~\ref{lem:pi0HZ-integers}.
  The lower horizontal row comes from the Wood cofiber sequence
  in Theorem~\ref{thm:woodkq}
  and is thus exact.
  Suppose now that $R$ is such that the outer vertical homomorphisms
  in diagram~(\ref{eq:kmw-wood}) are isomorphisms.
  By induction, the exactness of the upper row will be established, and
  the isomorphism property will be shifted from $w$ to $w+1$.

  To start the induction, observe that by
  very effectivity $\pi_{0-(w)}\kq_R \to \pi_{0-(w)}\KQ_R$
  is an isomorphism for $w\leq 0$. The target represents
  the Witt group of symmetric bilinear forms over $R$ if $w< 0$
  and the Grothendieck-Witt group of symmetric bilinear forms over $R$
  if $w=0$.
  The source represents the same group if $R=\Z$ by
  Example~\ref{ex:kmw-integers} and if $R=\Z_{(p)}$
  for $p>3$ by \cite[Theorem 5.4]{gsz} (and for $p=3$ by
  \cite[Theorem 1.3]{schlichting-rogers}). Moreover,
  the canonical map $\KMW_w(R)\to \pi_{0-(w)}\kq_R$ is an
  isomorphism for $w\leq 0$.
  Inductively, one proves exactness of the
  upper row in the diagram~(\ref{eq:kmw-wood}) at the
  relevant spot $\KMW_{w+1}(R)$, and then shifts the
  surjectivity one to the left. To do the same for
  injectivity, observe that the homomorphism
  $\pi_{0-(w)}\Sigma^{(1)}\HZ_R\to \pi_{0-(w)}\Sigma^{(1)}\kq_R$
  is injective for $w\leq 2$, since then the source of
  the incoming differential given in Proposition~\ref{prop:pi0kgl-integers}
  is zero, and the zero homomorphism for $w>2$ by
  \cite[Theorem 10.1, Lemma 10.2]{kro}. This holds actually for
  every subring $R\subset \Q$, because
  $H^{w+1,w+1}(\Z)\to H^{w+1,w+1}(R)\to H^{w+1,w+1}(\Q)$
  are isomorphisms if $w>2$.
  However, then also $\KMil_{w+1}(R)\xrightarrow{\hyp} \KMW_{w+1}(R)$
  is the zero homomorphism, again by comparison with $\Z$.
\end{proof}

Having established the desired computation for the very effective hermitian
$K$-theory spectrum over $\Z$, it remains to lift it to the
motivic sphere spectrum over $\Z$.
One ingredient will be the corresponding situation over fields $F$ of
characteristic $2$, which had to be excluded from large parts of
\cite{rso.oneline, rso.twoline} due to the lack of a good
motivic spectrum $\kq_F$ at the time of writing.

\begin{proposition}\label{prop:kmw-kq-char-2}
  Let $F$ be a field of characteristic $2$. The canonical map
  $\KMW_\star(F)\to \pi_{0-(\star)}\kq_F$ is an isomorphism.
\end{proposition}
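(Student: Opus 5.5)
The strategy mirrors the proof of Theorem~\ref{thm:pi0kq-integers}: compare the Milnor-Witt $K$-theory of $F$ with $\pi_{0-(\star)}\kq_F$ via the Wood cofiber sequence of Theorem~\ref{thm:woodkq}, using that $\KMW_\star(F)$ is known to be the $\pi_0$ of the sphere over a field by Morel's theorem. Since Morel's identification holds for all fields, including characteristic $2$, and the unit $\unit_F\to\kq_F$ is a map of ring spectra, the canonical map factors as $\KMW_\star(F)\iso\pi_{0-(\star)}\unit_F\to\pi_{0-(\star)}\kq_F$. It therefore suffices to show that the unit induces an isomorphism on $\pi_0$. Over a field, $\unit_F$ and $\kq_F$ are both connective, so $\pi_0$ is computed by the zeroth homotopy module. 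I plan to reduce to a comparison of homotopy modules and handle the $\eta$-torsion and $\eta$-local parts separately.

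First I will treat the quotient by $\eta$. By Theorem~\ref{thm:woodkq}, $\kq_F/\eta\simeq\kgl_F$. By Theorem~\ref{thm:zeroslicecof-kgl} and connectivity, as in Proposition~\ref{prop:pi0kgl-integers}, $\pi_{0-(\star)}\kgl_F\iso\pi_{0-(\star)}\HZ_F\iso\KMil_\star(F)$. The last isomorphism is Nesterenko--Suslin--Totaro, which holds in every characteristic. Moreover, $\pi_{-1}\kgl_F=0$ and $\pi_{-1}\kq_F=0$ by Morel connectivity over a field, since both spectra are very effective. The exact sequence $\pi_0\Sigma^{(1)}\kq_F\xrightarrow{\eta}\pi_0\kq_F\to\pi_0\kgl_F\to0$, together with the corresponding exact sequence $\KMW_{w+1}\xrightarrow{\eta}\KMW_w\to\KMil_w\to0$, then gives the comparison modulo $\eta$. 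This is the right-hand part of diagram~(\ref{eq:kmw-wood}), now over $F$.

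Second, I will anchor an induction on the weight, exactly as in the proof of Theorem~\ref{thm:pi0kq-integers}. For $w\leq0$, very effectivity gives $\pi_{0-(w)}\kq_F\iso\pi_{0-(w)}\KQ_F$. This is the Grothendieck-Witt group of symmetric bilinear forms for $w=0$, and the Witt group $W(F)$ for $w<0$. These agree with $\KMW_w(F)$ in every characteristic by Morel's presentation, and the agreement holds because the $\KQ$ of \cite{chn} represents symmetric bilinear (not quadratic) forms. Here I must check that the canonical maps match these classical identifications, for instance by naturality along the Witt-group presentation with generators $\langle u\rangle$. Then I will run the five-lemma induction in diagram~(\ref{eq:kmw-wood}) from $w$ to $w+1$. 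This requires that $\pi_{0-(w)}\Sigma^{(1)}\HZ_F\to\pi_{0-(w+1)}\kq_F$ has image equal to the image of $\hyp$ on $\KMil_{w+1}(F)$. Equivalently, I must identify the kernel of $\eta$ on $\pi_{0-(w+1)}\kq_F$.

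The main obstacle is that last identification. Over a field of characteristic $2$, $\pi_1\kgl_F$ is not automatically described by Proposition~\ref{prop:pi0kgl-integers}, and the slices of $\kq_F$ in characteristic $2$ are only available through \cite{kro}. The slices of $\kq$ over a Dedekind scheme, applied to $F$ (or obtained via base change from $\Z$ or $\FF_2$), should supply the same description of the first slice, namely the connecting map $\Sigma^{(1)}\HZ/2\to\Sigma^{1+(1)}\HZ$. This gives surjectivity of $\pi_1\kq_F\to\pi_1\s_0\kq_F$, hence that the image of $\pi_1\kgl_F\to\pi_0\Sigma^{(1)}\kq_F$ comes from $\KMil_{w+1}(F)\xrightarrow{\hyp}$. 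As a fallback, I would argue directly: $\hyp$ is the image of the hyperbolic form, and the composite $\pi_0\kgl\to\pi_0\Sigma^{(1)}\kq\to\pi_0\kq$ is the hyperbolic map, which on $K_0$-type classes sends symbols to $\hyp\cdot$symbols. Surjectivity of $\KMil_{w+1}(F)\to\pi_{0-(w+1)}\kgl_F$ then forces the kernel of $\eta$ to be $\hyp\KMW_{w+1}(F)$. Since $\eta$-torsion in $\KMW_\star(F)$ is exactly this image, the induction closes.
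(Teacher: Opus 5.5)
Your skeleton matches the paper's. You anchor the weights $w\le 0$ via very effectivity, the identification of $\pi_{0-(w)}\KQ_F$ with Grothendieck--Witt and Witt groups of symmetric bilinear forms, and Morel's presentation, and then induct along diagram~(\ref{eq:kmw-wood}).

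\textbf{The gap: injectivity in the induction step.} Identifying $\ker(\eta)\subset\pi_{0-(w+1)}\kq_F$ with the image of $\hyp\KMW_{w+1}(F)$ only gives \emph{surjectivity} of $\KMW_{w+1}(F)\to\pi_{0-(w+1)}\kq_F$. For injectivity, suppose $y$ maps to $0$. Then $\eta y=0$ by the inductive hypothesis, so $y=\hyp z$ with $z\in\KMil_{w+1}(F)$. You still have to show that $\hyp z$ cannot map to zero unless it is zero. In other words, you need the lower left map $\pi_{0-(w)}\Sigma^{(1)}\HZ_F\to\pi_{0-(w+1)}\kq_F$ of (\ref{eq:kmw-wood}) to be injective, at least modulo the kernel of the upper $\hyp$. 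Knowing its image as a subgroup says nothing about this, and your proposal never addresses it. Over $\Z$ the paper needs a separate analysis of exactly this map, which is even zero there for $w>2$.

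This is precisely where characteristic $2$ enters the paper's proof. By Geisser--Levine, the differential $\pi_2\HZ_F\to\pi_0\Sigma^{(1)}\HZ_F$ from Proposition~\ref{prop:pi0kgl-integers} vanishes, so $\pi_0\Sigma^{(1)}\HZ_F$ injects into $\pi_0\Sigma^{(1)}\kq_F$. Moreover $\hyp=2+\eta[-1]=2+\eta[1]=2$, and $\KMil_\star(F)$ has no $2$-torsion, so $\KMil_{w+1}(F)\xrightarrow{\hyp}\KMW_{w+1}(F)$ is injective. A direct way to finish your own chase: the image of $\hyp z$ in $\pi_{0-(w+1)}\kq_F$ maps to $2z$ under $\pi_{0-(w+1)}\kq_F\to\pi_{0-(w+1)}\kgl_F\cong\KMil_{w+1}(F)$. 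Hence $2z=0$, and so $z=0$ because $\KMil_\star(F)$ is $2$-torsion free in characteristic $2$.

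\textbf{Your fallback for identifying $\ker(\eta)$ does not work.} Theorem~\ref{thm:woodkq} identifies the connecting map $\pi_{1-(w)}\kgl_F\to\pi_{0-(w+1)}\kq_F$ with the hyperbolic map only on the $\beta$-multiples coming from $\pi_{0-(w+1)}\kgl_F$. As literally written, your composite ends with $\eta$ and is zero by exactness. Moreover, $\pi_{1-(w)}\kgl_F$ surjects onto $\pi_{1-(w)}\HZ_F=H^{w-1,w}(F)$, which need not vanish: for $w=2$ it is $K_3^{\mathrm{ind}}(F)$, e.g.\ $\Z/3$ for $F=\FF_2$. So surjectivity of $\KMil_{w+1}(F)\to\pi_{0-(w+1)}\kgl_F$ does not control the connecting map on those classes.

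You genuinely need these classes to lift to $\pi_1\kq_F$, i.e.\ surjectivity of $\pi_1\kq_F\to\pi_1\s_0\kq_F$ as in your first route. The paper takes this from the argument of Theorem~\ref{thm:pi0kq-integers}. Note that it does not follow from the shape of $\s_1\kq_F\to\s_1\kgl_F$ alone. It requires the vanishing of the slice differentials leaving the zero-slice column, plus a convergence argument for the non-$\eta$-complete $\kq_F$; compare the proof of Proposition~\ref{prop:pi1kqchar2}.
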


\begin{proof}
  As in the proof of Theorem~\ref{thm:pi0kq-integers}, effectivity implies
  that $\pi_{0-(\star)}\kq_F\iso \pi_{0-(\star)}\KQ_F$ for $\star\leq 0$. The
  latter groups are the Grothendieck-Witt group of $F$ for $\star=0$ and
  the Witt group of $F$ for $\star<0$, based on
  non-degenerate symmetric bilinear
  forms. In fact, this coincidence between ``homotopy symmetric'' and
  ``classical symmetric'' holds for any Dedekind ring in place of $F$
  \cite[Theorem 6]{9.III}.
  Morel showed in \cite[Lemma 3.10]{morel.at} that $\KMW_\star(F)$ presents
  these groups for $\star \leq 0$,
  thereby giving the statement for $\star\leq 0$.
  One may then proceed by induction as in the proof of
  Theorem~\ref{thm:pi0kq-integers}. Here
  the map $\pi_{2}\HZ_F\to \pi_0\Sigma^{(1)}\HZ_F$
  occurring in Proposition~\ref{prop:pi0kgl-integers} is
  always zero by \cite{geisser-levine.p}, giving that the latter injects into
  $\pi_{0}\Sigma^{(1)}\kq_F$.
  Also $\KMil_{w+1}(F)\xrightarrow{\hyp}\KMW_{w+1}(F)$ is injective,
  because $\hyp=2+\eta[-1]=2+\eta[1]=2$ and $\KMil(F)$ has
  no $2$-torsion by loc.~cit. This allows to conclude the induction.
\end{proof}

For the sake of completeness, the coefficients of $\kw_F=\kq_F[\eta^{-1}]$
deserve to be recorded.

\begin{proposition}\label{prop:pikwchar2}
  Let $F$ be a field of characteristic $2$. There exists a
  canonical element $\alpha\in \pi_{2+(2)}\kw_F$ such that
  \[ \pis_{\ast+(\star)}\kw_F\iso \Witt[\eta^{\pm 1},\alpha] \]
  as graded rings, where $\Witt$ denotes the unramified Witt sheaf
  on $\Sm_F$. The square of the element $\alpha$ coincides with
  the image of the Bott periodicity element $\beta^4\in \pi_{4+(4)}\kq_\Z$
  in $\pi_{4+(4)}\kw_F$. 
\end{proposition}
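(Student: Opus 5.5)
The plan is to compute $\pis_{\ast+(\star)}\kw_F$ from the Wood cofiber sequence of Theorem~\ref{thm:woodkq} after inverting $\eta$, and then to control the result with the known homotopy of $\KQ_F$ in the $\eta$-periodic range. Since $\kw_F=\kq_F[\eta^{-1}]=\colim(\kq_F\xrightarrow{\eta}\Sigma^{-(1)}\kq_F\xrightarrow{\eta}\dotsm)$, the weight grading collapses: multiplication by $\eta$ is an isomorphism on $\pis_{\ast+(\star)}\kw_F$. It therefore suffices to determine $\pis_{n}\kw_F$ in one fixed weight for each $n$, and to show that this is $\Witt$ for $n\equiv 0 \bmod 4$ and zero otherwise. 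This will be organised so that the generator in degree $4k$ is $\alpha^k$ for $k\geq 0$ (and $\alpha^{k}$ interpreted via $\beta^{-4}$ for $k<0$). Note that $\alpha$ lies in $\pi_{2+(2)}$, of topological degree $4$, so the degree convention must be tracked carefully.

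For the zeroth line, I would use Proposition~\ref{prop:kmw-kq-char-2} sheafified: the argument there works for finitely generated field extensions and Henselian local rings, via Gersten/unramifiedness. This gives $\pis_{0-(\star)}\kq_F\iso\KMW_\star$, hence $\pis_{0+(\star)}\kw_F\iso\KMW_\star[\eta^{-1}]\iso\KWitt_\star\iso\Witt[\eta^{\pm1}]$. In characteristic~$2$ we have $[-1]=[1]=0$, so $\hyp=2$ and $\KWitt_0=\KMW_0/\hyp$. That matches the Witt sheaf, because $\Witt=\mathrm{GW}/\hyp$ and $\eta$-inversion kills exactly the $\hyp$-torsion image.

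For the higher lines, I would compare with $\KW_F=\KQ_F[\eta^{-1}]$. The very effective cover differs from $\KQ_F$ only by a very effective-coconnective piece that becomes trivial after $\eta$-inversion in positive topological degrees. Equivalently, I would use the very effective slices $\vs_q\KQ$ from \cite[Theorem 4.1]{kro}: after inverting $\eta$, the slices $\Sigma^{2q+(2q)}\HZ$-type summands vanish because $\eta$ acts by zero on $\HZ$-modules. The only surviving pieces are the $\Sigma^{q+(q)}\kw$-type pieces in $q\equiv 0\bmod 2$ for $\vs_q$, so one gets $\kw_F\simeq\bigvee$ or a filtration with $\eta$-inverted graded pieces $\Sigma^{4k+(\cdot)}\Witt$-spectra. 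In characteristic~$2$ the real content is that $\KW_F$ has homotopy $\Witt$ in degrees $0\bmod 4$ and zero in degrees $2\bmod 4$. The latter needs the vanishing of the symplectic Witt group ($\Witt^{2}(F)=0$, since symplectic forms are hyperbolic; this holds in characteristic $2$ as well by \cite[Theorem 6]{9.III}-type classical results). Odd degrees vanish since Balmer–Witt groups $\Witt^{odd}$ of regular local rings vanish.

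The element $\alpha$ would be constructed as a lift generating $\pi_{2+(2)}\kw_F\iso\Witt(F)$, canonically from the Witt class of the rank-one form in shifted degree; I expect it to come from $\Witt^{2}\neq0$ with twisted degree or from the ``$\beta^2$ over $\Z$ up to $\eta$'' element. The relation $\alpha^2=\beta^4$ I would check by base change from $\Z$ to $F$ and comparing generators of $\pi_{4+(4)}\kw_F\iso\Witt(F)$: both are units (their images are $4$-periodicity generators), and $\Witt(\FF_2)=\Z/2$ forces equality after normalising $\alpha$. The main obstacle I anticipate is precisely this step: producing the canonical $\alpha$ and showing the multiplicative structure is polynomial in $\alpha$ (in particular that $\alpha$ is not already a multiple of $\beta^2$-type elements and that no extension problems arise in the filtration). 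To address it, I would try to define $\alpha$ via the connecting map of the Wood sequence applied to $\beta\in\pi_{1+(1)}\kgl_F$, then verify multiplicativity using the ring map $\kq_\Z\to\kq_F$.
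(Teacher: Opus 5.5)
Your proposal has a genuine gap, and at the decisive step it argues against the statement. After inverting $\eta$ only the connectivity index $s$ of $\pi_{s+(w)}$ is meaningful, since multiplication by $\eta\in\pi_{0+(1)}$ changes the topological degree but not $s$. The ``topological degree $4$'' of $\alpha$ therefore plays no role. Since $\alpha\in\pi_{2+(2)}$ and $\beta^4\in\pi_{4+(4)}$ have $s=2$ and $s=4$, the proposition asserts that $\pis_{s+(\star)}\kw_F\iso\Witt$ for every even $s\geq 0$ and vanishes otherwise. This is a $2$-periodicity, with $\alpha^k$ in $\pis_{2k}$ and no negative powers, because $\kw_F$ is the connective cover of $\KW_F$ by very effectivity.

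Your target ``$\Witt$ for $n\equiv 0\bmod 4$, zero otherwise'' is the answer $\Witt[\eta^{\pm1},\beta^4]$ valid away from characteristic $2$. Killing the degrees $\equiv 2\bmod 4$ via the vanishing of the symplectic Witt group kills exactly the group $\pis_{2}\kw_F$ in which $\alpha$ must live. So your later assertion $\pi_{2+(2)}\kw_F\iso\Witt(F)$ is unsupported and contradicts your own computation.

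The missing idea is that in characteristic $2$, where $-1=1$, the relevant degree-$2$ Witt group is one of symmetric forms, not of alternating forms. This is the $2$-periodicity of \cite[Cor.~R.10]{9.III}. The paper uses it to get $\pi_{2+(2)}\KW_F\iso\Witt(F)$ and defines $\alpha$ as the unit there; it is canonical because $\KW$ is absolute and over $\FF_2$ the unit is the only nonzero element. That alternating forms are hyperbolic is true, but it computes a different group.

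Two further steps would fail. First, defining $\alpha$ through the Wood connecting map $\kgl_F\to\Sigma^{1+(1)}\kq_F$ only produces classes annihilated by $\eta$. The image of that map is the kernel of the next map, which is multiplication by $\eta$, so every such class vanishes in $\kw_F=\kq_F[\eta^{-1}]$. For the same reason the Wood sequence carries no information at all after inverting $\eta$, since $\kgl[\eta^{-1}]\simeq 0$.

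Second, your argument for $\alpha^2=\beta^4$ (``both are units, and $\Witt(\FF_2)=\Z/2$'') presupposes that $\alpha$ is a periodicity element. That is exactly the polynomial structure you flagged as the main obstacle, and without it nothing excludes $\alpha^2=0$. The paper settles this over $\FF_2$ using the multiplicative structure on the slices of $\KW$, transferred to this setting via \cite[Corollary 4.4]{kro}. There $\alpha$ is detected by the nonzero class in $\pi_{2+(2)}\s_2\KW_{\FF_2}\iso h^{0,0}(\FF_2)$. Its square is detected by the nonzero class in $\pi_{4+(4)}\s_4\KW_{\FF_2}$, which also detects $\beta^4$; this step uses the Geisser--Levine vanishing of $\pi_s\HZ_F/2$ for $s>0$ in characteristic $2$. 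The result is $\pis_{\ast+(\star)}\KW_F\iso\Witt[\eta^{\pm1},\alpha^{\pm1}]$, and the claim for $\kw_F$ follows by passing to the connective cover.

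Your treatment of the zeroth line, $\pis_{0+(\star)}\kw_F\iso\Witt[\eta^{\pm1}]$, is fine.
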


\begin{proof}
  As documented below \cite[Remark 8.1.2]{chn}, $\KQ$, and
  hence $\KW$, admits Bott periodicity with respect to a
  distinguished element in $\pi_{4+(4)}\KQ_\Z$ which
  lifts the fourth power of the Bott periodicity element
  $\beta\in \pi_{1+(1)}\KGL_\Z$.
  As a suggestive abuse of notation, let $\beta^4$ denote the
  lift to $\pi_{4+(4)}\KQ_\Z$, even though it is not a fourth power
  of something in $\pi_{1+(1)}\KQ_\Z$.
  Let $\beta^4$ also denote its image in $\pi_{4+(4)}\KW_\Z$, as
  well as lifts to the very effective cover $\pi_{4+(4)}\kq_\Z$ and
  the connective cover $\kw:=\kq[\eta^{-1}]$.
  To justify this terminology, observe first that
  \cite[Remark 8.1.8]{chn}
  implies a canonical equivalence $\KW\simeq \KQ[\eta^{-1}]$.
  Then $\kw$ coincides with the connective cover of 
  $\KW$, because for every $s\geq 0$ the canonical
  map $\kw\to \KW$ induces an isomorphism
  \[ \pis_s\kw \iso \colim_{m\to \infty} \pis_{s+(m)}\kq\iso
    \colim_{m\to \infty} \pis_{s+(m)}\KQ\iso \pis_s\KW \]
  of sheaves of homotopy groups.
  
  Now suppose that $F$ is a field of characteristic $2$. 
  The $2$-periodicity stated
  in~\cite[Cor.~R.10]{9.III} provides an element
  $\alpha$ in the correct degree, namely the unit in
  $\pi_{2+(2)}\KW_F\iso \Witt(F)$. For the sake of uniqueness,
  it suffices to observe by absoluteness of $\KW$
  that for $F=\FF_2$, the unit is the
  only nontrivial element. To conclude that its square is
  $\beta^4$, the multiplicative structure on the slices of
  $\KW$ recorded in \cite{ro.mult} now transfers to any scheme
  essentially smooth over a Dedekind scheme, thanks to
  \cite[Corollary 4.4]{kro}.
  Since $\alpha$ corresponds to the unique nonzero element in
  $\pi_{2+(2)}\s_2\KW_{\FF_2}\iso h^{0,0}(\FF_2)$, its square is
  sent to the unique nonzero element in $\pi_{4+(4)}\s_4\KW_{\FF_2}$,
  which corresponds to $\beta^4$. Here the vanishing
  of $\pi_s\HZ_F/2$ for $s>0$ for fields of
  characteristic $2$ enters \cite{geisser-levine.p}.
  It follows that
  $\pis_{\ast+(\star)}\KW_F\iso \Witt[\eta^{\pm 1},\alpha^{\pm 1}]$, where
  $\Witt$ denotes the unramified Witt sheaf on $\Sm_F$ obtained from
  symmetric bilinear forms.
  Hence $\pis_{\ast+(\star)}\kw_F$ has the claimed form.
\end{proof}

The vanishing of $\pi_s\HZ_F/2$ for $s>0$ from \cite{geisser-levine.p}
used in the proof of Proposition~\ref{prop:pikwchar2}
implies that on the start page of the slice spectral sequence for 
$\pi_{s+(w)}\kq_F$, the columns for $s$ odd and positive consist of
of at most $2s-1$ nonzero entries, namely the integral motivic cohomology
groups $H^{s-w-2,s-w-1}(F),H^{s-w-4,s-w-3}(F),\dotsc, H^{-w-s,-w}(F)$.
In particular, the homotopy groups $\pi_{s+(\star)}(\kq_F)^{\wedge}_\eta[\eta^{-1}]$ are
concentrated in even nonzero degrees, just as $\pi_{s+(\star)}\kw$.
In fact, more is true.

\begin{lemma}\label{lem:pilastcorner}
  Let $F$ be a field of characteristic $2$.
  The periodicity element $\alpha$ gives rise to
  an isomorphism
  \[ \pi_{\ast+(\star)}(\kq_F)^{\wedge}_\eta[\eta^{-1}] \iso
    \Witt(F)^{\wedge}_I[\eta^{\pm 1},\alpha]\]
  of graded rings.
\end{lemma}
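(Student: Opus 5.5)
The plan is to compute $\pi_{\ast+(\star)}(\kq_F)^{\wedge}_\eta$ with the slice spectral sequence, which converges to it by \cite[Lemma 3.13]{rso.oneline}. Afterwards we invert $\eta$ and read off the answer using the periodicity element $\alpha$ from Proposition~\ref{prop:pikwchar2}.

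First we use the description of the effective slices of $\kq_F$ from \cite[Corollary 4.4]{kro}. These are sums of suspensions of $\HZ$ and $\HZ/2$, and \cite{geisser-levine.p} gives $\pi_s\HZ_F/2=0$ for $s>0$. Together these show that after inverting $\eta$ only finitely many contributions survive in each degree. Concretely, the $\eta$-multiplication on slices is given by the maps between the $\Sigma^{q+(q)}\HZ/2$ summands. In the colimit along $\eta$, each column $s$ then sees only the groups $h^{n,n}(F)\iso \KMil_n(F)/2$. For $s$ even these sit in a single row of filtration $q$, for each $q$; for $s$ odd nothing survives, which matches the remark preceding the lemma.

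Second, we identify the $E_\infty$-page with $\operatorname{gr}_I\Witt(F)[\eta^{\pm1},\alpha]$. In column $s=0$ the $\eta$-inverted $E_1$-terms are $h^{q,q}(F)\iso I^q(F)/I^{q+1}(F)$; this is Milnor's conjecture, which in characteristic $2$ is Kato's theorem. All differentials vanish for degree reasons, since odd columns are zero. Because $\kq_F$ is an $\alpha$-module after $\eta$-completion and inversion, $\alpha$ detects the generator of $\pi_{2+(2)}\s_2$. Multiplicativity of the slice spectral sequence \cite{ro.mult} then shows that multiplication by $\alpha$ shifts columns $s\mapsto s+2$ isomorphically on $E_\infty$.

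Third, we need to address convergence and extensions. In column $0$ the induced filtration on $\pi_{0+(w)}$ has associated graded $\bigoplus_q I^q/I^{q+1}$, and it is complete because we work with the $\eta$-completion. We identify $\pi_{0+(0)}(\kq_F)^\wedge_\eta[\eta^{-1}]$ with $\lim_q \Witt(F)/I^q=\Witt(F)^\wedge_I$ via the map from $\pi_{0+(0)}\kw_F\iso\Witt(F)$ (Proposition~\ref{prop:pikwchar2}). This map is compatible with filtrations, with the fundamental ideal mapping into filtration $1$ as in the proof of Theorem~\ref{thm:pi0kq-integers}, and it induces an isomorphism on associated graded pieces. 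Hence it induces an isomorphism after $I$-adic completion, and the ring structure follows. We expect the main obstacle to be this step: we must check that $\eta$-inversion commutes with the relevant limits, so that the $E_\infty$-computation really yields $\lim_q \Witt(F)/I^q$ and not a $\lim^1$-corrupted group. The first thing to try is a Mittag-Leffler argument, using that the $\eta$-multiplications on the slice towers are eventually isomorphisms in each fixed degree.
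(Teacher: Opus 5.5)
Your proposal is correct and follows essentially the same route as the paper. Both use the slice spectral sequence for $\kq_F$ with the known slices and the Geisser--Levine vanishing, and both have $\alpha$-multiplication shift even columns isomorphically once the weight exceeds $s$, which $\eta$-inversion permits; in large weights $\eta$ acts isomorphically, so the colimit stabilises and your $\lim^1$ concern goes away. The identification of $\pi_{0+(0)}$ with $\Witt(F)^\wedge_I$ then comes, in both, from the filtration argument in the proofs of Proposition~\ref{prop:kmw-kq-char-2} and Theorem~\ref{thm:pi0kq-integers}.

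One small slip: each even column receives infinitely many contributions, one copy of $h^{n,n}(F)$ for each filtration $q$, not finitely many. This is exactly why the $I$-adic completion appears, as your third step in fact uses.
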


\begin{proof}
  The canonical map $\kw\to \kq^\wedge_\eta[\eta^{-1}]$ is a map of motivic
  ring spectra. The description of $\alpha$ from the
  proof of Proposition~\ref{prop:pikwchar2} and the multiplicative structure
  of the slices of $\kq$ given in \cite[Corollary 4.4]{kro} imply that 
  multiplication with $\alpha$ induces an isomorphism from the column
  indexed by the even number $s$ to the column indexed by $s+2$
  on the first page of
  the slice spectral sequence, provided the weight $w$ is greater than $s$.
  Since in the determination of $\pi_{s+(w)}(\kq_F)^{\wedge}_\eta[\eta^{-1}]$
  one may increase the weight along the inversion of $\eta$, the
  polynomial structure follows. It remains to see that
  $\pi_{0+(0)}(\kq_F)^\wedge_\eta[\eta^{-1}]\iso \Witt(F)^\wedge_I$ is the
  completion of the Witt ring of $F$ at the fundamental ideal.
  This follows from the proof of Proposition~\ref{prop:kmw-kq-char-2}.
\end{proof}

\begin{proposition}\label{prop:pi1kqchar2}
  Let $F$ be a field of characteristic $2$.
  The canonical map $\kq_F\to \s_0\kq_F\simeq \HZ_F$ induces
  an isomorphism
  $\pi_{1+(w)}\kq_F\iso \pi_{1+(w)}\HZ_F = H^{-w-1,-w}(F)$.
  In particular, this group is zero for $w>-2$.
\end{proposition}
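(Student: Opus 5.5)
The proof will use the effective slice spectral sequence for $\kq_F$, together with the vanishing $\pi_s\HZ_F/2=0$ for $s>0$ over fields of characteristic $2$ \cite{geisser-levine.p}. By \cite[Corollary 4.4]{kro} the slices of $\kq_F$ have the familiar form.
\begin{itemize}
\item The zero slice is $\s_0\kq_F\simeq\HZ_F$.
\item The first slice is $\s_1\kq_F\simeq\Sigma^{1+(1)}\HZ/2$.
\item For $q\geq 2$, the slice $\s_q\kq_F$ is a sum of suspensions $\Sigma^{q+(q)}\HZ$ (for $q$ even) or $\Sigma^{q+(q)}\HZ/2$, together with further summands $\Sigma^{2i+q+(q)}\HZ/2$ for $0\leq 2i<q$ in the appropriate pattern.
\end{itemize}
I first record which summands can contribute to $\pi_{1+(w)}$. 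A summand $\Sigma^{a+(q)}\HA$ contributes $\pi_{1-a+(w-q)}\HA$. Since $\pi_s\HZ/2_F=0$ for $s>0$, every $\HZ/2$-summand with $a\leq 0$ contributes nothing, because then $1-a>0$. For $q\geq 1$ all summands have $a\geq q\geq1$. The relevant groups are therefore $\pi_{1-a}$ with $1-a\leq 0$. Only $a=1$ yields degree $0$, and only $a=2$ yields degree $-1$.

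Concretely, in the $1$-column for $\s_q$ with $q\geq 1$ I need the summands with $a\in\{1,2\}$. These are $\Sigma^{1+(1)}\HZ/2$ in $\s_1$, which contributes $\pi_{0+(w-1)}\HZ/2$, and $\Sigma^{2+(2)}\HZ$ in $\s_2$, which contributes $\pi_{-1+(w-2)}\HZ$. Higher slices only contribute groups $\pi_{1-a}$ with $a\geq 3$, which sit in degree $\leq -2$. For a field these vanish, since $H^{p,q}(F)=0$ for $p>q$.

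The next step is to show that all contributions from the first and second slices die. The $\s_1$ contribution $\pi_{0+(w-1)}\HZ/2 = h^{1-w,1-w}(F)$ is hit injectively by the $d_1$ differential from the $2$-column. That column receives $\pi_{2+(w)}\s_0$, which is zero for a field, and also $\pi_{2+(w)}\s_1=\pi_{1+(w-1)}\HZ/2=0$. The comparison I actually need is with the kernel of $\pi_1\kgl\to\pi_1\HZ$ described in Proposition~\ref{prop:pi0kgl-integers}, which holds for fields as well. Over $F$ of characteristic $2$, that kernel is $\pi_0\Sigma^{(1)}\HZ_F=\KMil_{\cdot}(F)$ modulo the image of $\pi_2\HZ_F=0$ under the $\Sq^2$-composite, so it is the whole of $\pi_0\Sigma^{(1)}\HZ_F$. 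Under the Wood sequence, this maps isomorphically onto the $\hyp$-image in $\pi_0\Sigma^{(1)}\kq_F$. The injectivity of $\KMil_{w+1}(F)\xrightarrow{\hyp}\KMW_{w+1}(F)$ was established in the proof of Proposition~\ref{prop:kmw-kq-char-2}.

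To get the statement itself, I will use the Wood long exact sequence
\[\pi_{1}\Sigma^{(1)}\kq_F\xrightarrow{\eta}\pi_1\kq_F\to\pi_1\kgl_F\to\pi_0\Sigma^{(1)}\kq_F\xrightarrow{\eta}\pi_0\kq_F.\]
From Proposition~\ref{prop:kmw-kq-char-2} the kernel of the last $\eta$ is $\hyp\KMW$, the image of $\KMil$, and the injectivity argument just described identifies it with $\ker(\pi_1\kgl_F\to\pi_1\HZ_F)$. Consequently $\pi_1\kq_F\to\pi_1\kgl_F\to\pi_1\HZ_F$ is surjective, and its image in $\pi_1\kgl_F$ meets that kernel trivially. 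It remains to show that $\eta\colon\pi_{1+(w+1)}\kq_F\to\pi_{1+(w)}\kq_F$ has image mapping to zero in $\pi_1\kgl_F$, and that it is zero modulo that. I plan to argue by induction on $w$ starting from $w\ll 0$. For $w$ very negative, $\pi_{1+(w)}\kq_F=\pi_{1+(w)}\KQ_F$ is a higher Grothendieck–Witt group, which I hope to control via the slices. Alternatively, I can show that $\eta$-multiples in $\pi_1\kq_F$ vanish by $\eta$-completeness: $\eta$ lowers slice filtration by one, while $\pi_1$ only has filtration $0$ contributions. In this route the slice computation above shows that $\pi_1\kq_F^\wedge_\eta\to\pi_1\s_0$ is an isomorphism. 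The main obstacle is passing from $\kq^\wedge_\eta$ to $\kq$. For this I would use the square (\ref{eq:eta-kq}): Lemma~\ref{lem:pilastcorner} and Proposition~\ref{prop:pikwchar2} give odd-degree vanishing of both $\kw_F$ and the last corner, together with the $\pi_0$ and $\pi_2$ comparison $\Witt(F)\to\Witt(F)^\wedge_I$. The Mayer–Vietoris sequence then shows that $\pi_1\kq_F\to\pi_1\kq^\wedge_\eta$ is an isomorphism, provided the map $\pi_2\kw_F\oplus\pi_2\kq^\wedge_\eta\to\pi_2\kq^\wedge_\eta[\eta^{-1}]$ is surjective. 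I expect this surjectivity to follow from the $\alpha$-periodicity.

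The final claim, that the group vanishes for $w>-2$, is then immediate: $H^{-w-1,-w}(F)=0$ when $-w<2$, because $H^{p,q}(F)=0$ for $q\leq 0$ unless $p=q=0$, and for $q=1$ one has $H^{0,1}(F)=0$.
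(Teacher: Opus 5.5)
Your final route is the paper's route. It uses the $\eta$-arithmetic square~(\ref{eq:eta-kq}), odd-degree vanishing for $\kw_F$ and $(\kq_F)^\wedge_\eta[\eta^{-1}]$, the slice spectral sequence giving $\pi_1(\kq_F)^\wedge_\eta\iso\pi_1\s_0\kq_F$, and then surjectivity of $\pi_2(\kq_F)^\wedge_\eta\oplus\pi_2\kw_F\to\pi_2(\kq_F)^\wedge_\eta[\eta^{-1}]$. \textbf{The genuine gap is this surjectivity.} It is the real content of the proof, and you only say you expect it from $\alpha$-periodicity, which cannot deliver it. By Proposition~\ref{prop:pikwchar2} and Lemma~\ref{lem:pilastcorner}, periodicity merely identifies $\pi_{2+(w)}\kw_F\to\pi_{2+(w)}(\kq_F)^\wedge_\eta[\eta^{-1}]$ with the completion map $\phi\colon\Witt(F)\to\Witt(F)^\wedge_I$. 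That map is not onto unless $\Witt(F)$ is $I$-adically complete.

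The missing idea is that the other summand $\pi_2(\kq_F)^\wedge_\eta$ fills the gap.
\begin{itemize}
\item For $w\geq 2$, the behaviour of $\eta$ on the slices of $\kq_F$ already makes $\pi_{2+(w)}(\kq_F)^\wedge_\eta\to\pi_{2+(w)}(\kq_F)^\wedge_\eta[\eta^{-1}]$ surjective.
\item For $w<2$, argue by approximation. Given $x=(x_n)\in\Witt(F)^\wedge_I$, choose $x'\in\Witt(F)$ lifting $x_{3-w}$. Then $x-\phi(x')$ lies in the image of $\KMW_{2-w}(F)^\wedge_I$, which is contained in the image of $\pi_{2+(w)}(\kq_F)^\wedge_\eta$.
\end{itemize}
Without this step, injectivity of $\pi_1\kq_F\to\pi_1(\kq_F)^\wedge_\eta$, and hence the proposition, remains unproved. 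Your Wood-sequence attempt stalls at the same point, namely the vanishing of $\eta\cdot\pi_1\kq_F$.

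Your slice bookkeeping is also off, and as written it would contradict the statement.
\begin{itemize}
\item In the paper's indexing, $\s_1\kq_F\simeq\Sigma^{(1)}\HZ/2$; compare the connecting map $\Sigma^{(1)}\HZ/2\to\Sigma^{1+(1)}\HZ$ in the proof of Theorem~\ref{thm:pi0kq-integers}.
\item For $n\geq 1$, the summands of $\s_n\kq_F$ are $\Sigma^{2i+(n)}\HZ/2$ with $0\leq 2i<n$, plus $\Sigma^{n+(n)}\HZ$ for $n$ even.
\item With your shift, the $1$-column would contain $h^{1-w,1-w}(F)\iso\KMil_{1-w}(F)/2$, which is typically nonzero. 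You kill it with a $d_1$ whose source you simultaneously declare to be zero.
\item Moreover, $\pi_{2+(w)}\HZ_F=H^{-2-w,-w}(F)$ is not zero in general; for example $H^{1,3}(\FF_2)\iso\Z/7$.
\end{itemize}
With the correct slices, the summands $\Sigma^{(n)}\HZ/2$ contribute $h^{n-w-1,n-w}(F)=0$ to the $1$-column by Geisser--Levine. All other summands in slices $n\geq 1$ contribute groups $H^{a,b}(F)$ or $h^{a,b}(F)$ with $a>b$, which vanish for a field. So $H^{-w-1,-w}(F)$ from $\s_0$ is the only nonzero $E_1$-entry in the $1$-column. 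You must still show these entries are permanent cycles, i.e.\ that no differential hits the $0$-column entries $h^{n-w,n-w}(F)$ for $n\geq 1$. The paper obtains this by comparing slice spectral sequences along $\kq_F\to\kw_F$.
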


\begin{proof}
  The observation regarding $\pi_{s+(\star)}(\kq_F)^\wedge_\eta[\eta^{-1}]$
  stated above Proposition~\ref{prop:kmw-kq-char-2} supplies an exact sequence
  \[ 0 \to \pi_2 \kq_F \to \pi_2(\kq_F)^\wedge_\eta\oplus \pi_2\kw_F
    \to \pi_2(\kq_F)^\wedge_\eta[\eta^{-1}] \to \pi_1\kq_F \to
    \pi_1(\kq_F)^\wedge_\eta \to 0 \]
  induced by the $\eta$-arithmetic square~(\ref{eq:eta-kq}).
  A comparison of slice spectral sequences along the map $\kq_F\to \kw_F$
  implies that the first two columns consist of permanent cycles.
  In particular, $\pi_1(\kq_F)^\wedge_\eta\iso \pi_1\s_0\kq_F$.
  It remains to see that the homomorphism
  $\pi_2(\kq_F)^\wedge_\eta\oplus \pi_2\kw_F
  \to \pi_2(\kq_F)^\wedge_\eta[\eta^{-1}]$ is surjective.
  
  Lemma~\ref{lem:pilastcorner} implies that already
  $\Witt(F)\iso\pi_{2+(\star)}\kw_F\to\pi_{2+(\star)}(\kq_F)^\wedge_\eta[\eta^{-1}]
  \iso \Witt(F)^\wedge_I$ is an isomorphism if $\Witt(F)$ is complete
  with respect to the fundamental ideal.
  The behaviour of $\eta$ on slices of $\kq_F$ (which coincides with
  the one for all other fields given in \cite[Section 4.3]{RO:slices} by
  comparison with $\Z$ and base change) shows that already
  $\pi_{2+(w)}(\kq_F)^\wedge_\eta\to \pi_{2+(w)}(\kq_F)^\wedge_\eta[\eta^{-1}]$
  is surjective for $w\geq 2$.
  
  Suppose that $w<2$, and let
  $x\in \pi_{2+(w)}(\kq_F)^\wedge_\eta[\eta^{-1}]\iso \Witt(F)^\wedge_I$.
  Writing $x=(x_n)_{n\in \N}$ with $x_n\in \Witt(F)/I^n$, let
  $x^\prime\in \Witt(F)$ be any preimage of $x_{2-w+1}$, and
  let $\phi\colon \Witt(F)\to \Witt(F)^\wedge_I$ denote the canonical
  homomorphism.
  Then $x+\phi(x^\prime)$ is contained in the image of $\KMW_{2-w}(F)^\wedge_I$,
  which in turn is contained in the
  image of $\pi_{2+(w)}(\kq_F)^\wedge_\eta\to
  \pi_{2+(w)}(\kq_F)^\wedge_\eta[\eta^{-1}]$.
  This proves the claimed surjectivity.
\end{proof}

To conclude this section, further properties for $\kq_S$ to be used later
will be discussed. The first is its invariance under base change,
also called absoluteness.
The motivic spectrum $\KQ_S$ is absolute in the sense that, for every
morphism $f\colon X\to Y$ of qcqs schemes, there is a canonical
equivalence $f^\ast \KQ_Y\simeq \KQ_X$ \cite[Prop.~8.2.1]{chn}.
This property does not extend automatically to its very effective
cover $\kq_S$, but a limited absoluteness is achievable.

\begin{proposition}\label{prop:kq-absolute}
  Let $g\colon R\to S$ be a morphism of schemes, each of which is essentially
  smooth over a Dedekind scheme.
  Then the canonical map $g^\ast\kq_S\to \kq_R$ is an equivalence.
\end{proposition}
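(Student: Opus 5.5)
Write $\KQ_S$ for the absolute hermitian $K$-theory spectrum, so that $g^\ast\KQ_S\simeq \KQ_R$. The map $g^\ast\kq_S\to\kq_R$ is then the adjoint of $g^\ast\vf_0\KQ_S\to g^\ast\KQ_S\simeq\KQ_R$. Since $g^\ast$ preserves very effective spectra (it sends $\Sigma^{s+(s)}X_+$ to $\Sigma^{s+(s)}(X\times_S R)_+$ and commutes with colimits), $g^\ast\kq_S$ is very effective. The canonical map therefore exists, and it is an equivalence if and only if the cofiber of $g^\ast\kq_S\to\KQ_R$ lies in the right orthogonal $\SH(R)_{<\{0\}\times\{0\}}$.

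I would not attack this orthogonality directly. Instead I would compare the Wood cofiber sequences. By Theorem~\ref{thm:woodkq}, over both $S$ and $R$ there are cofiber sequences
\[\Sigma^{(1)}\kq\xrightarrow{\eta}\kq\to\kgl\to\Sigma^{1+(1)}\kq.\]
Applying the exact functor $g^\ast$ to the sequence over $S$ and mapping it to the sequence over $R$ gives a map of cofiber sequences, since all maps involved are natural in the base.

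For algebraic $K$-theory the comparison is an equivalence. Theorem~\ref{thm:zeroslicecof-kgl} identifies $\kgl=\f_0\KGL$ for all qcqs schemes. It also shows that the filtration is obtained by Bott multiplication, $\f_q\KGL\simeq\Sigma^{q+(q)}\kgl$, so the slice tower is absolute; this is recorded in \cite{bem}. I would argue this concretely: $g^\ast$ preserves effectivity, so $g^\ast\kgl_S$ is effective. Moreover, the cofiber $g^\ast(\KGL_S/\kgl_S)$ is built from $g^\ast\s_q\KGL_S\simeq\Sigma^{q+(q)}g^\ast\HZ_S$ for $q<0$. The step $g^\ast\HZ_S\simeq\HZ_R$ holds for schemes essentially smooth over Dedekind schemes, by Spitzweck's motivic cohomology and its base-change property. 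It follows that $g^\ast\kgl_S\simeq\kgl_R$.

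Consequently, the cofiber $C$ of $g^\ast\kq_S\to\kq_R$ satisfies $\eta\colon\Sigma^{(1)}C\xrightarrow{\simeq}C$, so $C\simeq C[\eta^{-1}]$. The main obstacle is to show that an $\eta$-periodic $C$ of this kind vanishes. Here I would use that $C$ sits in a cofiber sequence between very effective objects, so $C$ is connective. Its image in $\KQ_R$ then lets me identify $C[\eta^{-1}]$ with the cofiber of $g^\ast\kw_S\to\kw_R$, where $\kw$ is the connective cover of the absolute spectrum $\KW\simeq\KQ[\eta^{-1}]$ (see the proof of Proposition~\ref{prop:pikwchar2}). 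The problem thus reduces to showing that connective covers of $\KW$ commute with $g^\ast$.

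To prove this, I would use the criterion from \cite[Prop.~B.3]{bachmann-hoyois.norms} and test after pulling back to points of $R$. This reduces the question to base change from a Dedekind scheme to its residue fields. For fields, the homotopy sheaves of $\kw$ are the Witt sheaf with $\eta^{\pm1}$, together with $\alpha$ in characteristic $2$ by Proposition~\ref{prop:pikwchar2}, or $\beta^4$-type periodic classes otherwise. Homotopy $t$-structures over fields are compatible with these identifications. I expect exactly this pointwise verification to require the most care, especially at points of residue characteristic $2$.
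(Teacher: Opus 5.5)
Your first half matches the paper. Comparing the Wood cofiber sequences and using absoluteness of $\kgl$ is the paper's own reduction, via the criterion that $\phi$ is an equivalence once $\phi/\eta$ and $\phi[\eta^{-1}]$ are. It correctly leaves you needing $g^\ast\kw_S\to\kw_R$ to be an equivalence, i.e.\ that $g^\ast$ commutes with the connective cover of $\KW$.

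That, however, is the whole substance of the proposition, and your proposal does not prove it; it only says the pointwise check ``requires the most care''. The hint you give does not work as stated. For a closed point $i\colon\Spec(F)\to S$, knowing $\pis_\ast\kw_F$ over the residue field tells you nothing about $\pis_\ast i^\ast\kw_S$. The reason is that $i^\ast$ is right $t$-exact but not left $t$-exact, so nothing forces $i^\ast\bigl((\KW_S)_{<0}\bigr)$ to stay coconnective. Mixed characteristic shows this is not a formality. Over the fraction field, $\pis_\ast\kw$ is concentrated in degrees divisible by $4$. Over $\FF_2$ there is the class $\alpha$ in degree $2$ (Proposition~\ref{prop:pikwchar2}). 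So you need a mechanism that produces $\pis_2 i^\ast\kw_S$ and excludes extra classes, e.g.\ in degrees $\equiv 3$ modulo $4$.

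The missing input is an independent handle on $i^\ast\kw_S$ from below.
\begin{itemize}
\item Away from residue characteristic $2$, the paper quotes Bachmann's absoluteness result for $\eta$-periodic theory over Dedekind domains.
\item At a closed point of characteristic $2$, it uses the absolute spectrum $\unit[\eta^{-1}]$. The cofiber of $\unit_S[\eta^{-1}]\to\kw_S$ is $4$-connective (Corollary~\ref{cor:sphere-etainv-kw-conn}), and $i^\ast$ preserves connectivity. Hence $\pis_s i^\ast\kw_S\to\pis_s\kw_F$ is an isomorphism for $s<3$ and onto for $s=3$. This is where $\alpha$ comes from, via $\tilde\alpha$ in the $\eta$-inverted sphere (Theorem~\ref{thm:sphere-etainv-char2}).
\item The paper then uses $\beta^4$-periodicity $\Sigma^{4+(4)}\kw_S\simeq(\kw_S)_{\geq 4}$ and the localization sequence $j_\sharp j^\ast\to\Id\to i_\ast i^\ast$ to show that $i^\ast$ commutes with this truncation. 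This propagates the isomorphism to all degrees except $s\equiv 3$ modulo $4$.
\item The remaining degrees are handled by the vanishing of $\pi_2\kw_S$ and $\pi_3\kw_S$, together with Druzhinin's connectivity theorem applied to $j_\sharp j^\ast(\kw_S)_{\geq 3}$.
\end{itemize}
None of these ingredients appears in your plan, in particular the comparison with $\unit[\eta^{-1}]$. As written, it reduces the proposition to its hard case without resolving it.
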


\begin{proof}
  Note first that the canonical map $g^\ast \KQ_S\to \KQ_R$ is an equivalence
  by \cite[Prop.~8.2.1]{chn}, in fact for any morphism of qcqs schemes.
  (The proposition is probably valid in that case.)
  Since $g^\ast$ preserves very effectivity, there results a canonical
  map $\phi\colon g^\ast \kq_S\to \kq_R$.
  To check that it is an equivalence, it suffices by
  \cite[Prop.~B.3]{bachmann-hoyois.norms} to prove that
  $i^\ast(\phi)$ is an equivalence for every point $i\colon \Spec(F)\to S$.
  This reduces the task to the situation where $g\colon \Spec(F)\to S$
  is the inclusion of a point in a Dedekind scheme $S$.
  By \cite[Theorem 4.5]{bachmann.eta-dedekind}, this holds if
  $F$ is of characteristic not $2$, passing from $S$ to an open subscheme
  containing $g(\Spec(F))$ and no point of residue characteristic $2$.
  Note that pullback along (essentially) smooth morphisms preserves
  very effective covers.
  This also reduces the task to the case where $g\colon \Spec(F)\to S$
  is the inclusion of a closed point of characteristic $2$ in the
  spectrum $S$ of a discrete valuation ring.
  
  Already at an earlier stage, but also now,
  quoting \cite[Lemma 2.16]{bachmann-hopkins}
  reduces the problem to establishing that both
  $\phi[\eta^{-1}]$ and $\phi/\eta$ are equivalences. In the latter
  case, absoluteness for $\kgl\simeq \kq/\eta$
  has been documented in \cite{bem}.
  It remains to conclude that the canonical map
  $\phi[\eta^{-1}]\colon g^\ast \kw_S\to \kw_F$ is an equivalence.
  The $\eta$-inverted sphere $\unit[\eta^{-1}]$ is absolute,
  and the unit map $\unit_S[\eta^{-1}]\to \kw_S$ has a $4$-connective
  cofiber by Corollary~\ref{cor:sphere-etainv-kw-conn}.
  It follows that $\pis_sg^\ast \kw_S\to \pis_s\kw_F$ is an isomorphism
  for all $s<3$ and surjective for $s=3$.
  Multiplication with the periodicity element
  $\beta^4\in \pi_{4+(4)}\kw_S$ induces
  an equivalence $\Sigma^{4+(4)}\kw_S\xrightarrow{\simeq} (\kw_S)_{\geq 4}$.
  Hence multiplication with
  $g^\ast \beta^4\in \pi_{4+(4)}g^\ast\kw_S$ induces an equivalence
  $\Sigma^{4+(4)}g^\ast \kw_S\xrightarrow{\simeq} g^\ast((\kw_S)_{\geq 4})$.
  Since this motivic spectrum is $4$-connective, it maps
  canonically to $(g^\ast(\kw_S))_{\geq 4}$.
  To prove that this map is an equivalence, it suffices to prove
  that $g^{\ast}((\kw_S)_{<4})$ admits only zero maps from
  any $4$-connective motivic spectrum $\E^\prime$, which
  may be taken to be of the form $g^\ast \E$ for some
  $4$-connective motivic spectrum $\E$ over $S$.

  Let $j\colon U\hookrightarrow S$ be the open embedding of the
  complement of the closed point. So $U$ is the spectrum
  of the fraction field of $S$. The localization
  cofiber sequence
  \[ j_\sharp j^\ast \to \Id \to g_\ast g^\ast \]
  implies that $[\E,g_\ast g^{\ast}((\kw_S)_{<4})]$ embeds
  in $[\Sigma^{-1}\E,j_\sharp j^{\ast}((\kw_S)_{<4})]$.
  To prove that the latter is the zero group, observe that
  the canonical map $(\kw_S)_{<4}\to (\kw_S)_{<3}$ induces an
  equivalence $j^\ast((\kw_S)_{<4}\to (\kw_S)_{<3})$.
  This uses that $j^\ast$, being induced by a smooth morphism,
  commutes with truncations, preserves $\kw$, and
  $\pi_3\kw_U\iso 0$.\footnote{If $U$ has characteristic not $2$, one may replace $3$ with $1$.}
  Since $(\unit_S[\eta^{-1}])_{<3}\simeq (\kw_S)_{<3}$ by
  Corollary~\ref{cor:sphere-etainv-kw-conn}, absoluteness for
  $\unit[\eta^{-1}]$ implies that 
  $[\Sigma^{-1}\E,j_\sharp j^{\ast}((\unit_S[\eta^{-1}])_{<3})]=0$.
  
  The equivalence $\Sigma^{4+(4)}g^\ast (\kw_S)\simeq (g^\ast(\kw_S))_{\geq 4}$
  can be iterated. Together with the already known connectivity of
  the map $g^\ast\kw_S\to \kw_F$, this implies that it is an isomorphism
  on all homotopy sheaves $\pis_s$, except perhaps for positive indices
  $s\equiv 3(4)$.
  To conclude the isomorphism for these remaining cases, observe that
  by Proposition~\ref{prop:pikwchar2}
  both the source and the target of this map
  acquire a compatible periodicity element
  in $\pi_{2+(2)}g^\ast\kw_S\iso \pi_{2+(2)}\kw_F$ which is a square root
  of the one already used. However, these
  periodicity elements only come from
  $S$ if the fraction field of $S$ has characteristic $2$;
  the argument above may be suitably modified in that case.
  To conclude for the mixed characteristic case, one may cite
  \cite[Remark 2.3.5]{9.III} to get that
  in the localization cofiber sequence already employed,
  both $\pi_2\kw_S$ and $\pi_3\kw_S$ are $0$.
  This implies an isomorphism $\pi_{3}g^\ast\kw_S\iso \pi_2j_\sharp j^\ast\kw_S$.
  Since one may pass to the $3$-connective cover
  of $\kw_S$ in order to determine $\pi_3$, and
  the map $j^\ast (\kw_{\geq 4} \to \kw_{\geq 3})$ is an equivalence,
  the motivic spectrum $j_\sharp j^\ast \kw_{\geq 3}$ is $4$-connective
  over a Noetherian base of Krull dimension $1$. Then
  $\pi_2j_\sharp j^\ast \kw_{\geq 3}=0$ by \cite[Theorem V]{druzhinin.conn}.
  As this argument for vanishing homotopy groups works over any field
  of characteristic $2$, it holds on the level of homotopy sheaves
  by \cite{morel.at}. The equivalence $g^\ast \kw_S\to \kw_F$ follows.
\end{proof}

The absolute motivic spectrum $\KQ$ is known to satisfy absolute
purity by \cite[Theorem 8.4.2]{chn}, 
which extends \cite[Theorem D.1]{dfjk} over schemes in which $2$ is a unit.
This means that
for every smoothable local complete
intersection morphism $f\colon X\to Y$ of regular separated finite-type
$S$-schemes, the purity transformation
$\Sigma^{\Tan f}f^\ast\KQ_Y \to f^!\KQ_Y$
defined in \cite{djk} is an equivalence.
Here $S$ is Noetherian of finite Krull dimension,
and $\Sigma^{\Tan f}$ is suspension with the
Thom spectrum of the relative tangent bundle of $f$.
Again the extension to its very effective cover $\kq$ is not automatic,
and the base change invariance given in Proposition~\ref{prop:kq-absolute}
is too limited anyhow. The following, in which the
suspension with the trivial relative tangent bundle has been specified,
has to suffice.

\begin{proposition}\label{prop:absolute-purity-kq}
  Let $S$ be a connected Dedekind scheme whose fraction field has
  characteristic $0$.
  Let $i\colon Z\hookrightarrow S$ be the closed embedding of a point of
  positive characteristic $p$. Then the purity transformation
  $\Sigma^{-1-(1)}i^\ast\kq_S\to i^!\kq_S$ is an equivalence.
\end{proposition}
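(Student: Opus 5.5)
We reduce, as in the proof of Proposition~\ref{prop:kq-absolute}, to checking that the purity transformation becomes an equivalence after inverting $\eta$ and after reducing modulo $\eta$. By \cite[Lemma 2.16]{bachmann-hopkins} this suffices. The purity transformation $\Sigma^{-1-(1)}i^\ast\E\to i^!\E$ is natural in $\E$ and compatible with smashing by $\unit_S$-module maps. The functors $i^\ast$ and $i^!$ both commute with the cofiber of $\eta$: $i^\ast$ is monoidal, and $i^!$ is exact and linear over $\SH(S)$ via the projection formula for $i^!(\E\smash \Sigma^{(1)}\unit)$. Moreover $i^!$ commutes with colimits here, since $i_\ast$ preserves compacts. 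So the transformation for $\kq_S/\eta$ is the cofiber of the transformation for $\eta\smash\kq_S$, and the one for $\kq_S[\eta^{-1}]$ is the colimit along $\eta$. First we may pass to the spectrum of the local ring at the closed point $Z$. Pulling back along the open inclusion of a neighbourhood is smooth, so it preserves $\kq$, $i^\ast$ and $i^!$. Hence we may assume $S$ is the spectrum of a discrete valuation ring of mixed characteristic $(0,p)$ with open complement $j\colon U\hookrightarrow S$.

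For the $\eta$-cofiber, Theorem~\ref{thm:woodkq} identifies $\kq_S/\eta\simeq\kgl_S$. We then need absolute purity for $\kgl_S$. By Theorem~\ref{thm:zeroslicecof-kgl}, $\kgl$ has a Bott filtration whose graded pieces are $\Sigma^{n+(n)}\HZ$, and it is slice complete. So it suffices to prove purity for $\HZ_S$, compatibly with the filtration, and then pass to the limit. Passing to the limit requires a connectivity estimate on $i^!$ of the $n$-th layers; purity for the layers themselves provides it. Purity for $\HZ$ over mixed characteristic Dedekind domains is the Gysin/localization sequence for Bloch--Levine motivic cohomology, equivalently Spitzweck's $\HZ$. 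Here $i^\ast\HZ_S\simeq\HZ_Z$ and $i^!\HZ_S\simeq \Sigma^{-1-(1)}\HZ_Z$ \cite{bem}. Alternatively, absolute purity for $\KGL$ together with the fact that $i^!$ and $i^\ast$ of the very effective cover agree with the cover of the result can be checked on homotopy sheaves via the localization sequence $j_\sharp j^\ast\to\Id\to i_\ast i^\ast$.

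For the $\eta$-inverted part, $\kq_S[\eta^{-1}]=\kw_S$. We use that $\KW_S$ satisfies purity by \cite[Theorem 8.4.2]{chn}, and that $\kw$ is its connective cover. We then compare connective covers. The map $\Sigma^{-1-(1)}i^\ast\kw_S\to i^!\kw_S$ sits over the purity equivalence for $\KW$, and the cofiber $\KW/\kw$ is $(<0)$-truncated, so everything reduces to showing that $i^!$ of a $(<0)$-truncated $\eta$-periodic spectrum stays $(<-1)$-truncated in the appropriate sense. A cleaner route uses Proposition~\ref{prop:kq-absolute}, namely $i^\ast\kw_S\simeq\kw_Z$, together with the localization triangle $i_\ast i^!\kw_S\to\kw_S\to j_\ast j^\ast\kw_S$. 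Purity then amounts to identifying the fiber of $\kw_S\to j_\ast\kw_U$ with $i_\ast\Sigma^{-1-(1)}\kw_Z$. On homotopy sheaves this is the localization sequence for Witt groups $0\to W(S)\to W(U)\to W(Z)\to 0$ from \cite{9.III}, together with its $\alpha$/$\beta^4$-periodic shifts. Using Proposition~\ref{prop:pikwchar2} when $p=2$ and the usual description of $\pi_\ast\kw$ otherwise, both sides have homotopy concentrated in degrees $\equiv 0 \bmod 4$ or $\bmod 2$.

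The main obstacle is this last identification for $\kw$. One must show that the connective cover interacts correctly with $i^!$, since $i^!$ does not preserve connectivity and can shift it by one. This is exactly where the mixed characteristic hypothesis and the vanishing of $\pi_2\kw_S,\pi_3\kw_S$ (\cite[Remark 2.3.5]{9.III}) enter, as in the proof of Proposition~\ref{prop:kq-absolute}. I would first try to verify that $i^!\kw_S$ is $(-1)$-connective by checking, via the localization sequence, that $\pi_{-1}$ and $\pi_{-2}$ of $i^!\KW_S$ are correctly captured by those of $\kw$. The comparison with $\Sigma^{-1-(1)}\kw_Z$ then follows from purity of $\KW$ by taking connective covers.
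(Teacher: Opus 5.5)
\textbf{Overall.} Your reduction and your $\kgl$ part are sound, but the $\eta$-periodic step, which carries the content, is only announced; this is a genuine gap. It can be closed along your own route, as sketched below.

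\textbf{What works, and how it compares with the paper.} The reduction to $\kq_S/\eta\simeq\kgl_S$ and $\kq_S[\eta^{-1}]=\kw_S$ is valid: if the cofibre $P$ of the purity map has $P/\eta\simeq 0$, then $\eta$ is invertible on $P$, so $P\simeq P[\eta^{-1}]$; and $i^\ast$, $i^!$ commute with $\eta$ and with colimits. Your $\kgl$ argument is essentially the paper's. One correction: passing to the limit needs no connectivity estimate for $i^!$, which preserves limits. The delicate side is $i^\ast$, handled by $i^\ast\kgl_S\simeq\kgl_Z$ and $\beta$-completeness of $\kgl_Z$.

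For the $\eta$-periodic part you depart from the paper, which never proves purity for $\kw$ directly. It gets purity for $\kq^\wedge_\eta$ from the Wood sequence and $\kgl$. It then compares homotopy sheaves of the fibre $\mathsf{g}$ of $\kw\to\kq^\wedge_\eta[\eta^{-1}]$ with those of $\Sigma^{1+(1)}i^!\mathsf{g}_S$, using purity of $\KQ$. These sheaves are explicit, built from $\Witt$ and $\Witt^\wedge_I$, and concentrated in degrees $\equiv 3 \bmod 4$ (odd degrees if $p=2$).

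\textbf{The gap.} Your key step for $\kw$ is only announced, and the reduction you state for it is false as a general claim. $i^!$ of a $(<0)$-truncated $\eta$-periodic spectrum need not be $(<-1)$-truncated. For $\mathsf{H}$ in the heart of $\SH(Z)[\eta^{-1}]$, the object $\Sigma^{-1}i_\ast\mathsf{H}$ is $(<0)$-truncated, yet $i^!$ returns $\Sigma^{-1}\mathsf{H}$. The tools you propose do not address this. The sequence $0\to W(S)\to W(U)\to W(Z)\to 0$ concerns global sections over $S$ and does not control $\pis_\ast i^!\kw_S$ on $\Sm_Z$. The vanishing of $\pi_2\kw_S,\pi_3\kw_S$ is not what is needed either.

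\textbf{How to close it.} Three steps suffice.
\begin{enumerate}
\item $\KW_S/\kw_S\in\SH(S)_{<0}$, by orthogonality for the very effective cover plus compactness along the $\eta$-colimit. Also $i^!$ preserves $\SH_{<0}$, because $i_\ast$ preserves connectivity. To see the latter, check on points via \cite[Prop.~B.3]{bachmann-hoyois.norms}: $i_\ast G$ restricts to $0$ at the generic point and to $G$ at $Z$.
\item $i^!$ lowers connectivity by at most one. Take a discrete valuation ring $R'$ essentially smooth over $S$, with residue field $L$ and fraction field $K'$. The localization sequence gives an exact sequence $\pi_{s+1}E(K')\to\pi_s(i^!E)(L)\to\pi_sE(R')$. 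For connective $E$ both ends vanish when $s\le -2$: the left because $K'$ is a field, the right by \cite[Theorem V]{druzhinin.conn}. Unramifiedness over $Z$ then shows $i^!\kw_S$ is $(-1)$-connective.
\item Purity of $\KW$ comes from that of $\KQ$. With Proposition~\ref{prop:kq-absolute}, the purity map followed by $i^!\kw_S\to i^!\KW_S\simeq\Sigma^{-1-(1)}\KW_Z$ is the $(-1)$-connective cover. Hence $\pis_{-1}i^!\kw_S\to\pis_{-1}i^!\KW_S$ is onto.
\end{enumerate}
The long exact sequence of $i^!\kw_S\to i^!\KW_S\to i^!(\KW_S/\kw_S)$ then makes the purity map for $\kw$ an isomorphism on every $\pis_s$. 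So your final ``taking connective covers'' needs step 1 and the surjectivity in step 3, not only connectivity of $i^!\kw_S$.

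Completed this way, your route avoids the $\eta$-completion and the explicit Witt-sheaf computation for $\mathsf{g}$. As written, though, it stops exactly where the content lies.
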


\begin{proof}
  To avoid minus signs, the suspension will be moved to the other side.
  The proof involves the Wood cofiber sequence from Theorem~\ref{thm:woodkq},
  but first the equivalence
  $i^\ast\kgl_S \to  \Sigma^{1+(1)}i^!\kgl_S$
  for the very effective algebraic $K$-theory spectrum will be obtained.
  It follows from the Bott periodicity cofiber sequence
  \[ \Sigma^{1+(1)}\kgl_S\xrightarrow{\beta}\kgl_S \to \HZ_S \to
    \Sigma^{2+(1)}\kgl_S\]
  and the corresponding purity result recorded for
  the integral motivic Eilenberg-MacLane spectrum $\HZ_S$
  in \cite[Prop.~8.3]{frankland-spitzweck}.
  Here are some details: The reduction to the case treated in
  loc.~cit.~follows by passage to the
  local ring in $S$ at the given closed point.
  Hence we may assume that $S$ is the spectrum of a discrete valuation ring
  of mixed characteristic. 
  The Bott periodicity cofiber sequence
  and the contractibility $\holim_{m\to \infty}\Sigma^{1+(1)}\kgl_S\simeq
  \holim_{m\to\infty}\vf_m\KGL_S\simeq \ast$ imply a natural equivalence
  $\kgl_S\simeq (\kgl_S)^{\wedge}_\beta:=\holim_{m\to \infty}\kgl_S/\beta^m$.
  Induction on $m$, starting with the case $\kgl_S/\beta\simeq \HZ_S$,
  provides that the purity transformation
  \[ \kgl_Z/\beta^m\simeq i^\ast\kgl_S/\beta^m \to \Sigma^{1+(1)}i^!\kgl_S/\beta^m \]
  is an equivalence for all $m$, using $i^\ast\kgl_S\simeq \kgl_Z$
  by absoluteness for $\kgl$ documented in \cite{bem}.
  Since $\Sigma^{1+(1)}$ and $i^!$ commute with homotopy limits, there
  results on homotopy limits the equivalence
  $(\kgl_Z)^{\wedge}_\beta \simeq \Sigma^{1+(1)}i^!((\kgl_S)^\wedge_\beta)$,
  which a priori may not be the purity equivalence for $(\kgl_S)^\wedge_\beta$.
  The $\beta$-completeness of $\kgl$, however, identifies it up to equivalence
  with the purity equivalence for $\kgl_S$.

  Similarly, the Wood cofiber sequence and the purity equivalence
  for $\kgl$, together with Proposition~\ref{prop:kq-absolute},
  supply an equivalence
  \[ (\kq_Z)^{\wedge}_\eta \simeq \Sigma^{1+(1)}i^!((\kq_S)^\wedge_\eta).\]
  Unfortunately, $\kq_Z$ is not necessarily $\eta$-complete.
  By the $\eta$-arithmetic square~(\ref{eq:eta-kq}) for $\kq$, the fiber of
  the $\eta$-completion map $\kq\to \kq^\wedge_\eta$ is naturally equivalent
  to the fiber $\mathsf{g}$
  of $\kq[\eta^{-1}]\to \kq^\wedge_\eta[\eta^{-1}]$.
  It suffices to prove that the map
  $\phi\colon \mathsf{g}_Z\to \Sigma^{1+(1)}i^!\mathsf{g}_S$
  is an equivalence.
  Both source and target live in the $\eta$-inverted motivic
  stable homotopy category of a field. It remains to check
  that the purity transformation induces an isomorphism
  on stable homotopy sheaves $\pis_s$ for $s\in \Z$.
  Now $\kq[\eta^{-1}]$ coincides with the connective cover
  of the motivic spectrum representing symmetric Witt groups,
  and $\kq^\wedge_\eta[\eta^{-1}]$ represents its completion with respect
  to the fundamental ideal, as proven in \cite{levine.filtration}
  for fields of characteristic not $2$; an extension to
  fields of characteristic $2$ is documented in the proof of
  Proposition~\ref{prop:kmw-kq-char-2}.
  Hence the source of $\pis_s\phi$ identifies with
  \[ \pis_s \mathsf{g}_Z \iso
    \begin{cases} \Witt/\Witt^\wedge_I &-1\leq  s \equiv 3 (4) \\
      0 & \mathrm{else} 
    \end{cases}
  \]
  if $p$ is odd, and with
  \[ \pis_s \mathsf{g}_Z \iso
    \begin{cases} \Witt/\Witt^\wedge_I &-1\leq  s\  \mathrm{odd} \\
      0 & \mathrm{else}  
    \end{cases}
  \]
  if $p=2$.
  Here $\Witt$ denotes the sheaf of Witt groups on $\Sm_Z$,
  and $\Witt^\wedge_I$ is
  its completion at the fundamental ideal.
  For the target, one uses the cofiber sequence
  \[ i_\ast i^!\mathsf{g}_S\to \mathsf{g}_S\to j_\ast j^\ast \mathsf{g}_S \to
    \Sigma i_\ast i^! \mathsf{g}_S \]
  where $j$ is the open embedding of the generic point in $S$.
  The homotopy sheaves of $j^\ast \mathsf{g}_S$ are given similarly to
  the ones for $\mathsf{g}_Z$ in the case where $p$ is odd, only with
  the Witt sheaf on $\Sm_{F(S)}$ instead, where $F(S)$ is the function field
  of $S$. In particular, these homotopy sheaves are concentrated only
  in every fourth degree.
  Let $\mathsf{G}_S$ denote the fiber of
  $\KW_S\to (\KQ_S)^\wedge_\eta[\eta^{-1}]$, so that the canonical map
  $\mathsf{g}_S\to \mathsf{G}_S$ constitutes a $-1$-connective cover.
  Using the purity transformation for $\KQ_S$, one may deduce for the
  cofiber sequence
  \[ i_\ast i^!\mathsf{G}_S\to \mathsf{G}_S\to j_\ast j^\ast \mathsf{G}_S \to
    \Sigma i_\ast i^! \mathsf{G}_S \]
  that the target $i^!\mathsf{g}_S$ has homotopy sheaves
  isomorphic to those of $\mathsf{g}_Z$ shifted by
  one. Moreover, the naturality of
  the purity transformation then implies that
  $\pis_s\phi$ induces such an isomorphism, which concludes the proof.
\end{proof}

The hermitian $K$-theory spectrum $\KQ$ is cellular over any base scheme
by \cite{kumar, kumar-roendigs}, extending \cite{rso.cell} over
schemes in which $2$ is a unit. This means that it is contained in the
full localizing subcategory generated by the spheres $S^{s+(w)}\in \SH(S)$.
Again this property does not necessarily
extend to its very effective cover $\kq$.
Before addressing the cellularity of $\kw$, the cooperations $\kw\smash \kw$
will be determined, first over fields of characteristic $2$, and then
$2$-locally over Dedekind schemes.

\begin{proposition}\label{prop:kw-coop-char2}
  Let $F$ be a field of characteristic $2$. There exists a
  decomposition $\kw_F\smash \kw_F\simeq \bigoplus_{j\geq 0}\Sigma^{4j}\kw_F$
  of right $\kw$-modules
  in which the left unit is the inclusion of the bottom summand.
\end{proposition}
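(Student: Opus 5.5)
We build the map first and then check it is an equivalence on homotopy sheaves. By Proposition~\ref{prop:pikwchar2}, $\pis_{\ast+(\star)}\kw_F\iso\Witt[\eta^{\pm1},\alpha]$ with $\alpha$ in degree $2+(2)$. Since $\eta$ is inverted, everything lives in the $\eta$-periodic category, where $\pis_{s}$ for $s\in\Z$ is a sheaf of $\Witt$-modules and the weight plays no role. The decomposition should therefore be modeled on the topological splitting of $ko\smash ko$ after rationalisation/$2$-completion shifts. Over a field of characteristic $2$, however, $\kw_F$ looks like a "$2$-periodic real $K$-theory with $\alpha$ of degree $2$ in the $s$-grading". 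The claimed summands $\Sigma^{4j}\kw_F$ sit in $s$-degrees $4j$, in the same $s$-grading.

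\textbf{Step 1: build classes in $\kw_F\smash\kw_F$.} Write $\eta_L$ and $\eta_R$ for the left and right units. In $\pi_{4}(\kw_F\smash\kw_F)$ the difference $\eta_L(\alpha^2)-\eta_R(\alpha^2)$ is defined, with $\alpha^2=\beta^4$. The key input is that $\beta^4$ comes from $\pi_{4+(4)}\kq_\Z$, and that the unit $\unit_F[\eta^{-1}]\to\kw_F$ has $4$-connective cofiber (Corollary~\ref{cor:sphere-etainv-kw-conn}). In low degrees, $\kw_F\smash\kw_F$ therefore agrees with $\kw_F$ through $\pi_{\le 3}$. For each $j\ge 0$ we then seek classes $x_j\in\pi_{4j}(\kw_F\smash\kw_F)$, for instance suitable divided combinations of $(\eta_L\alpha^2-\eta_R\alpha^2)^j$. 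Right $\kw$-linearity turns these into maps $\Sigma^{4j}\kw_F\to\kw_F\smash\kw_F$, and summing gives
\[\Phi\colon\bigoplus_{j\ge0}\Sigma^{4j}\kw_F\to\kw_F\smash\kw_F.\]
The summand $j=0$ is $\eta_L$, as required.

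\textbf{Step 2: show $\Phi$ is an equivalence.} It suffices to show $\pis_s\Phi$ is an isomorphism for all $s$. Both sides are $\eta$-periodic, and the cofiber of $\unit[\eta^{-1}]\to\kw$ can be filtered by its Postnikov tower. We compute $\pis_\ast(\kw_F\smash\kw_F)$ by smashing that tower with $\kw_F$. The layers are $\Sigma^{s}$ of Eilenberg--MacLane objects of the sheaves $\pis_s\kw_F$, which are $\Witt$ in even degrees by Proposition~\ref{prop:pikwchar2}.

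Over $F$ of characteristic $2$, $\Witt/I$-type layers interact well with $\kw$, since $\kw/\alpha$ should be an Eilenberg--MacLane type spectrum of $\Witt$. An alternative route is to reduce modulo $\alpha$. We would compute $(\kw\smash\kw)/(\alpha_R)\simeq\kw\smash(\kw/\alpha)$ and show that it is a sum of $\Sigma^{4j}(\kw/\alpha)$. Then $\alpha$-completeness, which holds by connectivity since $\alpha$ raises $s$ by $2$, lifts the statement to $\Phi$.

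The expected main obstacle is identifying $\kw_F\smash\kw/\alpha$. This requires knowing the "$\eta$-periodic Steenrod algebra" in characteristic $2$, namely $\mathsf{H}\Witt\smash\mathsf{H}\Witt$ in the $\eta$-inverted category. We would attempt to import this by absoluteness from $\Spec(\Z)$ or $\Z_{(2)}$, using Proposition~\ref{prop:kq-absolute} and the known computations over $\Q$ or $\mathbb{R}$ (Bachmann--Hopkins). There $\kw\smash\kw$ is known to split as a sum of $\Sigma^{4j}\kw$, which mirrors $ko[1/2]\smash ko$-type splittings. The splitting would then be pulled back along the closed point $\Spec\FF_2\to\Spec\Z_{(2)}$, with base change from $\FF_2$ to $F$.

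The delicate point is that $\alpha$ does not exist over $\Z$; only $\beta^4$ does. The summands must therefore be indexed by $4j$ and defined using $\beta^4$ rather than $\alpha$. This is consistent with the statement, and we would organise Step 1 with $\beta^4$ accordingly.
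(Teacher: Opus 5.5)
\textbf{Step 1 fails, and this is where the key idea is missing.} In characteristic $2$ the element $\eta_L(\alpha^2)-\eta_R(\alpha^2)$ is zero. The decomposition you are trying to prove, or your own claim that $\kw_F\smash\kw_F$ agrees with $\kw_F$ through $\pi_{\le 3}$, gives $\pi_2(\kw_F\smash\kw_F)\iso\pi_2\kw_F$, since the summands $\Sigma^{4j}\kw_F$ with $j\geq 1$ contribute $\pi_{2-4j}\kw_F=0$. The multiplication $\mu$ satisfies $\mu\eta_L=\mu\eta_R=\Id$, so it is an isomorphism on $\pi_2$. Hence $\eta_L(\alpha)=\eta_R(\alpha)$, and so $\eta_L(\alpha^2)-\eta_R(\alpha^2)=\eta_L(\alpha)^2-\eta_R(\alpha)^2=0$. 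The two units also agree on $\Witt(F)$, which comes from the sphere, so they agree on all of $\pi_\ast\kw_F$.

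Consequently the generators in $\pi_{4j}(\kw_F\smash\kw_F)$ for $j\geq 1$ do not lie in the subring generated by the two units. ``Divided combinations'' cannot help: there is nothing nonzero to divide, and $2=\hyp=0$ already on $\unit_F[\eta^{-1}]$, so division by $2$ is unavailable anyway. The paper gets these classes from an outside source. It uses the $\SL$-orientation $\phi\colon\MSL\to\kw$, now available in characteristic $2$, together with the Bachmann--Hopkins identification $\MSL\smash\kw\simeq\kw[e_0,e_2,\dotsc]/(e_0-1)$, where $e_j$ has degree $2j+(2j)$. The images $\phi(e_j)\in\pi_\ast(\kw\smash\kw)$ serve as generators. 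The Bachmann--Hopkins proof of their Proposition~7.7, that these classes induce the splitting, then goes through precisely because $2=0$.

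\textbf{Step 2 is not an argument either.} The computation of $\kw_F\smash\kw_F/\alpha$ (in effect, $\eta$-periodic $\mathsf{H}\Witt$-cooperations in characteristic $2$) is left as the open ``main obstacle''. The proposed way around it is circular. A splitting over $\Z_{(2)}$ cannot be imported from $\Q$ or $\mathbb{R}$: an equivalence over $\Spec\Z_{(2)}$ must also be checked at the closed point $\Spec\FF_2$, and information at the generic point says nothing there. In the paper the $2$-local splitting, Corollary~\ref{cor:kw-coop}, is deduced \emph{from} the present proposition at $\FF_2$ (and from Bachmann--Hopkins away from characteristic $2$), not the other way round.

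You also cannot use your ``key input'', Corollary~\ref{cor:sphere-etainv-kw-conn}. In characteristic $2$ it is proved using exactly this decomposition. The same applies to the $\kw$-part of Proposition~\ref{prop:kq-absolute} at a point of characteristic $2$, which rests on that corollary.
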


\begin{proof}
  Let $\MSL$ denote the special linear algebraic cobordism spectrum,
  as introduced in \cite{panin-walter.msp}. There is a canonical
  $\SL$-orientation on $\kw$, that is, a map $\phi\colon\MSL\to \kw$ of motivic
  ring spectra \cite{ananyevskiy.sl} (which assumes the base to
  be a field of characteristic not $2$, but which now
  holds over a qcqs scheme by \cite{chn}).
  The identification $\MSL\smash \kw\simeq \kw[e_0,e_2,\dotsc]/(e_0-1)$
  from \cite[Theorem 4.1(2)]{bachmann-hopkins}, with $e_j$ having
  degree $2j+(2j)$, provides elements $\phi(e_j)$ in $\kw\smash \kw$
  for which the proof of \cite[Prop.~7.7]{bachmann-hopkins} holds;
  note that $2=2+\eta[1]=2+\eta[-1]=\hyp=0$ already on $\unit_F[\eta^{-1}]$
  if $F$ has characteristic~$2$.
\end{proof}

\begin{corollary}\label{cor:kw-coop}
  Let $S$ be essentially smooth over a Dedekind scheme.
  There exists a
  decomposition $\kw\smash \kw_{(2)}\simeq \bigoplus_{j\geq 0}\Sigma^{4j}\kw_{(2)}$
  of right $\kw$-modules
  in which the left unit is the inclusion of the bottom summand.
\end{corollary}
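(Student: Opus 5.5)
Over fields of characteristic not $2$ and $2$-locally, the analogous decomposition is \cite[Prop.~7.7]{bachmann-hopkins}. The plan is to build the same maps globally over $S$ and then check that they are equivalences pointwise, using the already established characteristic-$2$ case (Proposition~\ref{prop:kw-coop-char2}) at points of residue characteristic $2$.

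\emph{Construction.} The $\SL$-orientation $\phi\colon\MSL\to\kw$ exists over any qcqs base by \cite{chn}, compatibly with base change. The identification $\MSL\smash\kw\simeq\kw[e_0,e_2,\dotsc]/(e_0-1)$ of \cite[Theorem 4.1(2)]{bachmann-hopkins} should hold over $S$: it is a statement about $\eta$-inverted $\SL$-oriented theories, built from Thom classes and the quaternionic projective bundle formula, which \cite{chn} supplies for $\KQ$ and hence for $\kw$.
\begin{itemize}
\item Applying $\phi\smash\kw$ to the classes $e_j$ gives elements $\phi(e_{2j})\in\pi_{4j}(\kw\smash\kw)$. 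Only the $e_{2j}$ are used: in the $\kw$-homology of $\MSL$ the odd-indexed generators are killed after $2$-localization, as in the field case.
\item Right $\kw$-linear extension of these elements gives a map
\[\Psi\colon \bigoplus_{j\geq 0}\Sigma^{4j}\kw_{(2)}\to \kw\smash\kw_{(2)}\]
of right $\kw$-modules.
\item The $j=0$ summand is the left unit by construction, since $e_0=1$.
\end{itemize}

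\emph{Reduction to points.} To show that $\Psi$ is an equivalence, use \cite[Prop.~B.3]{bachmann-hoyois.norms}: it suffices to check that $i_s^\ast\Psi$ is an equivalence for every point $s\in S$. Pullback commutes with smash products and sums. Proposition~\ref{prop:kq-absolute} gives $i_s^\ast\kw_S\simeq\kw_{F_s}$, since both $\kq$ and $\eta$-inversion are compatible with pullback. So $i_s^\ast\Psi$ is the corresponding map over the residue field $F_s$.
\begin{itemize}
\item If $F_s$ has characteristic $2$, the map is the one from Proposition~\ref{prop:kw-coop-char2}, and it is an equivalence after $2$-localization as well.
\item If $F_s$ has characteristic not $2$, it is the equivalence of \cite[Prop.~7.7]{bachmann-hopkins}.
\end{itemize}

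One further check is needed: the elements $\phi(e_{2j})$ used in the field cases must be the pullbacks of the global ones. This holds by naturality of the orientation and of the Bachmann–Hopkins identification.

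\emph{Main obstacle.} The hard step is the global validity of $\MSL\smash\kw\simeq\kw[e_0,e_2,\dotsc]/(e_0-1)$, and more precisely the fact that the structure of $\pi_\ast(\kw\smash\kw)$ used in \cite[Prop.~7.7]{bachmann-hopkins} transfers to $S$. If the $\MSL$ identification is hard to globalize directly, the fallback is to avoid it:
\begin{itemize}
\item Define the maps $\Sigma^{4j}\kw\to\kw\smash\kw$ only over $S$, where this requires just elements of $\pi_{4j}(\kw\smash\kw)_S$.
\item Produce these elements by lifting the field-level classes through the localization sequence $j_\sharp j^\ast\to\Id\to i_\ast i^\ast$ on a discrete valuation ring, combined with absoluteness.
\item The equivalence check then remains purely pointwise, as above.
\end{itemize}
The $2$-localization is needed precisely because the odd primes appear in the denominators of Bachmann–Hopkins' Adams-operation-type classes in residue characteristic different from $2$.
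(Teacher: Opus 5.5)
\textbf{Gap in the construction of $\Psi$.} Your pointwise verification is what the paper does, but the construction step is not justified. You need elements $\phi(e_{2j})\in\pi_{4j}(\kw\smash\kw)$ over $S$. You obtain them by asserting that the identification $\MSL\smash\kw\simeq\kw[e_0,e_2,\dotsc]/(e_0-1)$ of \cite[Theorem 4.1(2)]{bachmann-hopkins} holds over an arbitrary $S$, on the grounds that \cite{chn} supplies Thom classes and the quaternionic projective bundle formula for $\KQ$ ``and hence for $\kw$''. That inference is not available. $\kw=\kq[\eta^{-1}]$ is built from the very effective cover, and structural properties of $\KQ$ do not pass to $\kq$ automatically. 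The paper has to argue separately for absoluteness, purity and cellularity of $\kq$ and $\kw$ for exactly this reason.

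Your fallback, lifting field-level classes through $j_\sharp j^\ast\to\Id\to i_\ast i^\ast$ on discrete valuation rings, is not an argument either. No obstruction group is controlled, and you would need lifts that are compatible at all points of $S$ simultaneously. As written, the map $\Psi$ is therefore never constructed.

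\textbf{The missing idea: reduce to $\Spec(\Z)$.} No global input is needed. Let $g\colon S\to\Spec(\Z)$ be the structure map. Proposition~\ref{prop:kq-absolute} gives $g^\ast\kq_\Z\simeq\kq_S$, hence $g^\ast\kw_\Z\simeq\kw_S$. Since $g^\ast$ is symmetric monoidal and preserves colimits, it carries a decomposition of $\kw\smash\kw_{(2)}$ over $\Z$ (as right $\kw$-modules, with the left unit as bottom summand) to one over $S$. It therefore suffices to work over $\Z$.

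Over $\Z$ the Bachmann--Hopkins identification is available by \cite[Footnote 6]{bachmann-hopkins}, and it produces the elements $\phi(e_j)$. Your pointwise check then finishes the proof, now only at the prime fields $\Q$ and $\FF_p$: use \cite[Prop.~7.7]{bachmann-hopkins} away from characteristic $2$ and Proposition~\ref{prop:kw-coop-char2} at $\FF_2$. This is exactly the paper's proof.

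A minor point: in the identification as quoted, only $e_0,e_2,\dotsc$ occur. So there are no odd-indexed generators for $2$-localization to kill. The $2$-localization matters only for the field-level splitting in characteristic different from $2$.
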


\begin{proof}
  It suffices to provide such a decomposition over $\Z$.
  There the identification $\MSL\smash \kw\simeq \kw[e_0,e_2,\dotsc]/(e_0-1)$
  from \cite[Theorem 4.1(2)]{bachmann-hopkins} holds; see
  \cite[Footnote 6]{bachmann-hopkins}. Here $e_j$ has
  degree $2j+(2j)$ and gives rise to elements $\phi(e_j)$ in $\kw\smash \kw$.
  That the resulting map is an equivalence after localizing at $2$
  can be checked over (prime) fields.
  It holds by \cite[Prop.~7.7]{bachmann-hopkins}
  in characteristic different from $2$ and Proposition~\ref{prop:kw-coop-char2}
  in characteristic $2$.
\end{proof}

\begin{proposition}\label{prop:kw-cellular}
  Let $S$ be essentially smooth over a Dedekind scheme. Then $\kw_S$ is
  cellular.
\end{proposition}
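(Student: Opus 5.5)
Since $\kw_S=\kq_S[\eta^{-1}]$ is a sequential colimit of the spectra $\Sigma^{-n(1)}\kq_S$ and cellular spectra are closed under colimits and desuspensions, it would suffice to know that $\kq_S$ is cellular. This is exactly what is not available. The plan is therefore to work directly in the $\eta$-inverted world and to build $\kw_S$ from the cellular spectrum $\unit_S[\eta^{-1}]$ using the cooperations from Corollary~\ref{cor:kw-coop}.

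Fracture square. Every $\eta$-periodic spectrum fits into the arithmetic square relating $\E$, $\E[1/2]$, $\E_{(2)}$ and $\E_\Q$. Cellularity is closed under finite limits and colimits, and under localizations such as $(-)[1/2]$ and $(-)_{(2)}$, which are filtered colimits of multiplication maps. It therefore suffices to treat $\kw_S[1/2]$ and $\kw_{S,(2)}$ separately.

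The part away from $2$. After inverting $2$ there are no points of residue characteristic $2$ left to worry about. There the cofiber of $\unit_S[\eta^{-1}]\to\kw_S$ is controlled by the computations of Bachmann–Hopkins. I would try to show that $\unit_S[\eta^{-1},1/2]$ splits off $\kw_S[1/2]$, or conversely that $\kw_S[1/2]$ is built from $\unit_S[\eta^{-1},1/2]$. Either way, $\kw_S[1/2]$ is obtained from a cellular spectrum by cellular operations.

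The $2$-local part: resolution. Here I use Corollary~\ref{cor:kw-coop}, which gives $\kw\smash\kw_{(2)}\simeq\bigoplus_{j\ge0}\Sigma^{4j}\kw_{(2)}$. Consider the $\kw$-based Adams–Novikov (cobar) resolution of $\unit_S[\eta^{-1}]_{(2)}$. Its terms are the smash powers $\kw^{\smash n}\smash\unit[\eta^{-1}]_{(2)}$, and by iterating the decomposition these are sums of suspensions of $\kw_{(2)}$. The idea is to reverse this: exhibit $\kw_{(2)}$ as a retract of, or a colimit built from, cellular objects. The cleanest route seems to be the following.
\begin{enumerate}
\item The comparison map $\kw_S^\cell\to\kw_S$ becomes an equivalence after applying $\kw\smash-$. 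Indeed, $\kw\smash\kw^\cell\simeq(\kw\smash\kw)^\cell$ holds, since the cellularization of a module over a ring with cellular-generated free modules commutes with smashing by that ring. By the decomposition, $(\kw\smash\kw)^\cell$ is a sum of shifts of $\kw^\cell$ on both sides, compatibly with the splitting.
\item Conclude from this that the fiber $\mathsf F$ of $\kw^\cell\to\kw$ satisfies $\kw\smash\mathsf F\simeq0$, and that $\mathsf F$ has no maps from spheres.
\item Use $\kw$-nilpotent completeness to deduce $\mathsf F\simeq0$: the unit $\unit[\eta^{-1}]\to\kw$ has $4$-connective cofiber by Corollary~\ref{cor:sphere-etainv-kw-conn}, and iterating gives convergence in the sense of Bachmann's $\eta$-periodic connectivity arguments.
\end{enumerate}

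The connectivity estimates needed in step 3 are checked on points by \cite[Prop.~B.3]{bachmann-hoyois.norms}. Over fields they hold by the computations of $\pis\kw_F$ (Proposition~\ref{prop:pikwchar2} and the known cases), and over $S$ by Proposition~\ref{prop:kq-absolute}.

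The main obstacle is step 3: showing that a spectrum $\mathsf F$ with $\kw\smash\mathsf F=0$ is zero. This is a convergence statement for the $\kw$-resolution, not a formal consequence. It needs the connectivity of the unit cofiber to increase under smash powers, which uses that $\SH(S)[\eta^{-1}]_{\ge0}$ is closed under smash products over a one-dimensional base, together with a bound on homotopy sheaves relative to Krull dimension \cite{druzhinin.conn}. I would first try to verify it stalkwise on fields and then lift using the gluing along the localization sequence $j_\sharp j^\ast\to\Id\to i_\ast i^\ast$.
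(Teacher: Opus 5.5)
Your fracture-square reduction is fine. Step~3 is essentially fine too: $\kw^\cell$ and $\kw$ are connective and $\eta$-periodic, so the fiber $\mathsf F$ of $\gamma\colon\kw^\cell\to\kw$ is bounded below, and the argument of Lemma~\ref{lem:sphere-etainv-kwresolution} shows that $\kw\smash\mathsf F\simeq0$ forces $\mathsf F\simeq0$.

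The gap is step~1. It carries the entire weight of the argument, and it is circular. The identity $A\smash E^\cell\simeq(A\smash E)^\cell$ holds for \emph{cellular} $A$: the class of such $A$ contains the spheres and, since spheres are compact, is closed under colimits. For $A=\kw$, however, this is exactly what you are trying to prove. The splitting of Corollary~\ref{cor:kw-coop} is a statement about $\kw$-modules, and $\mathsf F$ is not a $\kw$-module, so it gives no control over $\kw\smash\mathsf F$. Even granting $\kw\smash\kw^\cell\simeq(\kw\smash\kw)^\cell$ compatibly with the maps to $\kw\smash\kw$, the splitting identifies $\kw\smash\gamma$ with $\bigoplus_{j\geq0}\Sigma^{4j}\gamma$, which is an equivalence if and only if $\gamma$ is. Given step~3, ``$\kw\smash\mathsf F\simeq0$'' is equivalent to ``$\mathsf F\simeq0$''. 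So steps~1--2 restate the proposition rather than reduce it. Nothing in the cooperations produces a cellular model of $\kw$ anywhere.

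The part away from $2$ has the same defect. ``$\kw_S[1/2]$ is built from $\unit_S[\eta^{-1},1/2]$'' is cellularity itself. A splitting in either direction fails in general: over $\mathbb{R}$, real-\'etale comparison gives $\pi_3\unit[\eta^{-1},1/2]\cong\pi_3\mathbb{S}[1/2]\cong\Z/3$ while $\pi_3\kw[1/2]=0$, and in degree~$4$ the discrepancy goes the other way. What does work is the following. At points $Z$ of characteristic~$2$ one has $2=\hyp=0$ in $\pi_0\unit_Z[\eta^{-1}]$. Hence $j^\ast$ is an equivalence $\SH(S)[\eta^{-1},1/2]\simeq\SH(S[1/2])[\eta^{-1},1/2]$ preserving spheres, and you can import Bachmann's cellularity of $\kw$ over $\Z[1/2]$, as the paper does.

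At the prime~$2$ a genuinely new input is needed, and this is what the paper supplies:
\begin{enumerate}
\item It reduces to $\Spec\Z$ via Proposition~\ref{prop:kq-absolute}.
\item It builds an explicit cell structure $\unit_F[\eta^{-1}]=\D_1\to\D_2\to\dotsb\to\kw_F$ over perfect fields of characteristic~$2$. Cells are attached along classes $a_n$, starting with $a_1=\sigma\eta^{-4}$, read off from Theorem~\ref{thm:sphere-etainv-char2}.
\item It uses the purity equivalences of Proposition~\ref{prop:absolute-purity-kq} and Corollary~\ref{cor:absolute-purity-etainv} to see that both $i^!\kw$ and $i^!\kw^\cell$ are cellular.
\item It glues along $i_\ast i^!\to\Id\to j_\ast j^\ast$ by comparing homotopy groups of $\kw^\cell$ and $\kw$ on each piece in low degrees. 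This uses the $4$-connectivity of the unit cofiber and the known groups $\pi_\ast\kw_\Z$ and $\pi_\ast\kw_{\Z[1/2]}$.
\item It propagates the comparison to all degrees using $\beta^4$-periodicity.
\end{enumerate}
This hands-on gluing is necessary because cellularity, unlike vanishing, is not known to be detected on points.
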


\begin{proof}
  By Proposition~\ref{prop:kq-absolute}, it suffices to prove the
  statement over $\Z$. Let $\gamma\colon \kw^\cell \to \kw$ denote the natural
  cellularization map.
  The goal is to show that it is an equivalence.
  Let $i\colon \Spec(\FF_2)\hookrightarrow \Spec(\Z)$ denote the closed
  embedding of the prime field with two elements, having open complement
  $j\colon \Spec(\Z[2^{-1}])\hookrightarrow \Spec(\Z)$.
  The localization cofiber sequence provides a map
  \begin{equation}\label{eq:loc-cof-cell}
    \begin{tikzcd}
      i_\ast i^! \kw^\cell \ar[r] \ar[d,"i_\ast i^!\gamma"] &
      \kw^\cell \ar[d,"\gamma"] \ar[r] &
      j_\ast j^\ast \kw^\cell \ar[d,"j_\ast j^\ast\gamma"] \\ 
      i_\ast i^! \kw \ar[r]  & \kw  \ar[r] &
      j_\ast j^\ast \kw
    \end{tikzcd}
  \end{equation}
  of cofiber sequences.
  It suffices to check that $i^!\gamma$ and $j^\ast\gamma$ are
  equivalences.
  By \cite[Prop.~5.7]{bachmann.veff-ko-kgl} and
  Proposition~\ref{prop:kq-absolute}, $j^\ast \kw$ is cellular.
  This does not imply that $j^\ast \gamma$ is an equivalence, since
  a priori it is not clear whether $j^\ast \kw^\cell$ and $(j^\ast \kw)^\cell$
  are naturally equivalent. 
  Nevertheless, the map $j^\ast\kw^\cell \to j^\ast \kw$ has
  cellular source and target.
  Hence to check that it is an equivalence, it suffices to prove that
  $\pi_{s+(\star)}j^\ast \kw^\cell \to \pi_{s+(\star)}j^\ast \kw$ is an isomorphism
  for every $s$. This will happen after a short interlude on the
  left hand side of diagram~(\ref{eq:loc-cof-cell}) involving $i^!$.
  
  Achieving the same for $i^!\gamma$ requires
  to establish that $i^\ast \kw$ is cellular.
  To do so, let $F$ be a perfect field of characteristic $2$, and let
  $\kw_F$ be defined as above. 
  Proposition~\ref{prop:pikwchar2} says
  $\pis_{\ast+(\star)}\kw_F \iso \Witt[\eta^{\pm},\alpha]$, where
  $\eta\in \pi_{(1)}\kw_F$
  is the first algebraic Hopf map, and $\alpha\in \pi_{2+(2)}\kw_F$ is
  the unique element with $\alpha^2=\beta^4$.
  (Here the image of the lift to $\kq$ of $\beta^4\in \pi_{4+(4)}\kgl$ in
  $\kw$ is still denoted $\beta^4$.)
  As above, $\Witt$ denotes the sheaf of (homotopy) symmetric Witt groups.
  Actually, both elements are characterised by being nonzero in the respective
  bidegree, since the Witt group of a perfect field of characteristic $2$
  has two elements.

  To prove cellularity of $\kw_F$, the
  cell presentation of $\kw_\CC$ from \cite[Section 4]{r.etainv}
  will be shown to work over $F$. It has the following form.
  There is a sequence of cellular motivic spectra factoring the unit of
  $\kw_F$ as
  \[
    \unit_F[\eta^{-1}] = \D_1\xrightarrow{i_1} \D_2\xrightarrow{i_2} \cdots \to \D_n\xrightarrow{i_n} \cdots \to \kw_F
  \]
  such that for every $n$ the map $\D_n\to \kw_F$ is $(4n-1)$-connective,
  with the following further properties:
  \begin{enumerate}
  \item The composition $\unit_F[\eta^{-1}]\to \D_n\to \kw_F$ induces isomorphisms on
    $\pi_{k}$ for all even $k$.
  \item For every $n\ge 1$, the map $\D_n\to \D_{n+1}$
    is the canonical map to the cone of a unique nontrivial class
    $a_n\colon \Sigma^{4n-1}\unit_F[\eta^{-1}]\to \D_n$ in
    $\pi_{4n-1}\D_n$.
  \item For every $n\geq 1$, the canonical map
    $\D_{n+1}\to \Sigma^{4n}\unit[\eta^{-1}]$
    induces an isomorphism on
    $\pis_{k}$ whenever $k\ge 4n+3$ is odd.
  \end{enumerate}
  Taking the colimit as $n\to \infty$ gives a cell presentation of $\kw$.
  Its existence follows by induction on $n$.
  The induction start is provided
  by Corollary~\ref{cor:sphere-etainv-kw-conn} below, which gives
  the correct connectivity for $n=1$.
  The first property holds for $n=1$ by
  Theorem~\ref{thm:sphere-etainv-char2} below.
  Set $a_1:=\sigma \eta^{-4}$, where $\sigma\in \pi_{3+(4)}\unit$ is the third
  algebraic Hopf map.
  Taking $\D_2$ to be its cone then fulfils the second property for $n=1$,
  and that the canonical map $\D_2\to \Sigma^4\unit[\eta^{-1}]$ induces
  isomorphisms on $\pis_k$ for $k\geq 7$ odd can again be read off from
  Theorem~\ref{thm:sphere-etainv-char2}, giving the third property for
  $n=1$.
  Suppose $\unit_F[\eta^{-1}]\to \D_n$ has been provided.
  Then $a_n\in \pi_{4n-1}\D_n\iso \pi_{4n-1}\Sigma^{4(n-1)}\unit_F[\eta^{-1}]\iso
  \Witt(F)\{\sigma\eta^{-4}\}$ is the unique nonzero element.
  Set $\D_{n+1}$ to be the cone of $a_n$.
  Then $\D_{n}\to \D_{n+1}$ is $(4n-1)$-connective by construction. 
  The homotopy sheaves $\pis_{4n-1}\D_{n+1}$ and $\pis_{4n+1}\D_{n+1}$
  are now zero, in contrast with the homotopy sheaves
  $\pis_{4n-1}\D_{n}$ and $\pis_{4n+1}\D_{n}$,
  corresponding via the canonical map to $\Sigma^{4n}\unit_F[\eta^{-1}]$ to
  $\Witt(F)$ generated by $\sigma\eta^{-4}$ and $\sigma\eta^{-6}\tilde{\alpha}$,
  respectively.
  In fact, the map $\D_n\to \D_{n+1}$ induces the zero map on all homotopy
  sheaves with odd index, and an isomorphism on all homotopy sheaves with
  even index.
  Conversely, the map $\D_{n+1}\to \Sigma^{4(n+1)}\unit_F[\eta^{-1}]$
  induces an isomorphism on all homotopy sheaves with odd index $k\geq 4n+3$,
  and the zero map on all homotopy sheaves with even index.
  The required connectivity of $\D_{n+1}\to \kw_F$ follows, as well as the
  three properties, completing the inductive construction. 

  Again it is unclear whether $i^!\kw^\cell$ and $(i^! \kw)^\cell$ are
  naturally equivalent.
  Nevertheless, $i^!\gamma$ is a map with cellular target (as just proven,
  of course using the purity equivalence from
  Proposition~\ref{prop:absolute-purity-kq})
  and source (see Remark~\ref{rem:ishriek-cellular} below).
  Thus it suffices to prove that $\pi_{s+(\star)}i^!\gamma$ is an
  isomorphism for all $s$.

  The unit $\unit[\eta^{-1}]\to \kw$ lifts uniquely via $\gamma$ to
  $\kw^\cell$, and the cofiber of this lift coincides with $\C^\cell$,
  where $\C$ is the cofiber of the unit $\unit[\eta^{-1}]\to \kw$.
  The latter is $4$-connected by Corollary~\ref{cor:sphere-etainv-kw-conn}
  below. Hence also $\C^\cell$ is $4$-connected, since $(-)^\cell$ preserves
  connectivity. The functor $j^\ast$ preserves connectivity, and
  the functor $i^!$ does so up to a shift one down for cellular motivic
  spectra in $\SH(S)[\eta^{-1}]$, again by the identification
  $i^!\simeq \Sigma^{-1-(1)}i^\ast$ on $\SH^\cell(S)[\eta^{-1}]$ implied by 
  Corollary~\ref{cor:absolute-purity-etainv}.
  Hence when applying $\pi_{s+(\star)}$ to diagram~(\ref{eq:loc-cof-cell}),
  the vertical map on the left hand side induces an isomorphism for
  $s<2$.
  Also the vertical map on the right hand side induces an isomorphism for
  $s<2$.
  While $j^\ast \C^\cell$ and $j^\ast \C$ are $4$-connective, the base
  $\Z[2^{-1}]$ having Krull dimension $1$ implies only that
  $\pi_{s+(\star)}j^\ast \C^\cell\iso 0 \iso \pi_{s+(\star)}j^\ast \C$ for $s<3$
  by \cite[Theorem V]{druzhinin.conn}.
  Nevertheless, the diagram~(\ref{eq:loc-cof-cell}) then guarantees that
  also $\pi_{2+(\star)}j^\ast \gamma$ is an isomorphism.
  Here of course information on $\pi_{s+(\star)}\kw$ is used, for example
  from \cite[Theorem 10.13]{kro} for $\Z$ and \cite{berrick-karoubi} for
  $\Z[2^{-1}]$.
  
  The cofiber sequence
  \[ j_\sharp j^\ast \to \Id_{\SH(S)} \to i_\ast i^\ast\]
  left adjoint to the cofiber sequence used already above provides the last
  homomorphism in the sequence 
  \[ \pi_{3+(\star)}j_\ast j^\ast \kw^\cell \to
    \pi_{2+(\star)} i_\ast i^! \kw^\cell \iso \pi_{3+(\star)} i_\ast i^\ast
    \kw^\cell \to \pi_{2+(\star)} j_\sharp j^\ast \kw^\cell \]
  with the purity equivalence induced by
  Proposition~\ref{prop:absolute-purity-kq}inducing the middle isomorphism.
  The isomorphisms $\pi_{3+(\star)}\kw^\cell \iso \pi_{3+(\star)}\kw\iso 0 \iso
  \pi_{2+(\star)}\kw \iso \pi_{2+(\star)}\kw^\cell$ from \cite[Theorem 10.13]{kro}
  show that the outer two
  homomorphisms are isomorphisms as well.
  The map $j^\ast\gamma$ was established as being $3$-connective,
  whence so is $j_\sharp j^\ast\gamma$.
  It follows that
  $\pi_{2+(\star)}j_\sharp j^\ast \kw^\cell \iso
  \pi_{2+(\star)}j_\sharp j^\ast \kw \iso 0$.
  Thus both $\pi_{3+(\star)}j_\ast j^\ast \kw^\cell$ and
  $\pi_{2+(\star)} i_\ast i^! \kw^\cell$ are zero, giving that
  $j^\ast \gamma$ and $i^!\gamma$ induce isomorphisms of homotopy groups
  in the lowest four degrees of interest.
    
  To increase the range, one may use the periodicity element
  $\beta^4\in \pi_{4+(4)}\kw$ (which exists over $\Z$), inducing
  the same type of periodicity in $\kw^\cell$.
  It follows that $\pi_{s+(\star)}j^\ast \gamma$ and $\pi_{s+(\star)}i^! \gamma$
  are isomorphisms for all $s$.
  Hence both $j^\ast\gamma$ and $i^!\gamma$ are equivalences, completing
  the proof that $\gamma\colon \kw^\cell \to \kw$ is an equivalence.  
\end{proof}

A consequence of the Hopkins-Morel equivalence is that $\HZ_S$, and
then also $\kgl_S$, is cellular if $S$ is essentially smooth over a field
of characteristic zero \cite[Prop.~8.1]{hoyois.mgl-hz}.
Once it is established integrally over fields of positive characteristic,
cellularity of $\kq$ then follows.

\section{The sphere spectrum}
\label{sec:sphere-spectrum}

It remains to transfer the computation of $\pi_{0+(\star)}\kq_\Z$ to
$\pi_{0+(\star)}\unit_\Z$. A key ingredient is the following result.

\begin{theorem}\label{thm:sphere-kq-eff}
  Let $S$ be an essentially smooth scheme over a Dedekind scheme $\mathcal{D}$.
  The cofiber of the unit map $\unit_S\to \kq_S$ is
  $1$-very effective and $2$-connective.
\end{theorem}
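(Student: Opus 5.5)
The plan is to reduce to fields, where the statement is already known away from characteristic $2$, and to handle characteristic $2$ with the computations of Section~\ref{sec:hermitian-k-theory}. Let $\C_S$ denote the cofiber of $\unit_S\to\kq_S$.

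\emph{Step 1: reduction to fields.} Both properties in question, being $1$-very effective (that is, lying in $\SH(S)_{\geq \{1\}\times\{1\}}$) and being $2$-connective (lying in $\SH(S)_{\geq\{2\}\times\Z}$), can be tested pointwise by \cite[Prop.~B.3]{bachmann-hoyois.norms}. It therefore suffices to show that $i_s^\ast\C_S$ has both properties for every point $i_s\colon\Spec(F_s)\to S$. Since $i_s^\ast$ is exact, $i_s^\ast\unit_S\simeq\unit_{F_s}$, and $i_s^\ast\kq_S\simeq\kq_{F_s}$ by Proposition~\ref{prop:kq-absolute}, we get $i_s^\ast\C_S\simeq\C_{F_s}$. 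Applying Proposition~\ref{prop:kq-absolute} requires essential smoothness over a Dedekind scheme, which holds for $\Spec(F_s)$. The problem thus reduces to a field $F$.

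\emph{Step 2: fields of characteristic not $2$.} Here I would quote the known result (\cite{bachmann.hermitian} together with \cite{aro}, \cite{rso.oneline}) that $\unit_F\to\kq_F$ has $1$-very effective, $2$-connective cofiber. Concretely, the argument in that case runs as follows. Both $\unit_F$ and $\kq_F$ are very effective, and the unit induces an isomorphism on $\vs_0$. To see this, compare the generalized slices: $\vs_0\unit_F$ and $\vs_0\kq_F$ are both controlled by $\pis_{0}$ together with the truncation $\tau_{\le 0}^{\rm eff}$, and $\underline{\KMW}_0$ maps isomorphically onto $\pis_{0+(0)}\kq$, which is Theorem~\ref{thm:milnor-witt-ktheory} combined with the analogue of Proposition~\ref{prop:kmw-kq-char-2}. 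This gives $1$-very effectivity.

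For $2$-connectivity, Morel's connectivity theorem \cite{morel.stable-conn} reduces the claim to showing that $\pis_{0+(\star)}$ is an isomorphism and $\pis_{1+(\star)}$ is surjective. The first follows from Morel's theorem together with the $\KMW$ identification of $\pis_0\kq$. For the second, $\pis_1\kq_F$ is generated by the images of $\eta_\Top$ and $\KMW$-multiples, which lie in the image of $\pis_1\unit_F$ by \cite{rso.oneline}.

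\emph{Step 3: characteristic $2$.} The same argument works once its inputs are in place. Proposition~\ref{prop:kmw-kq-char-2} gives $\KMW_\star(F)\iso\pi_{0-(\star)}\kq_F$ on perfect field extensions, hence on homotopy sheaves, and Morel's theorem \cite{morel.newton} gives $\KMW\iso\pis_0\unit_F$. Together these show that $\pis_0$ is an isomorphism. Proposition~\ref{prop:pi1kqchar2} identifies $\pis_{1+(w)}\kq_F$ with $H^{-w-1,-w}(F)$, which equals $\KMil_{-w-1}/2$-type classes, concretely $\eta_\Top$ times Milnor symbols. This group is hit by $\pis_1\unit_F$, because the element $\eta_\Top\in\pi_{1+(0)}\unit$ maps to the generator of the $\s_0$-part, as one checks over $\Z$ and then base changes. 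This gives surjectivity on $\pis_1$. Very effectivity of $\C_F$ in the sense of being $1$-very effective then follows from $0$-very effectivity plus the $\pis_{0}$ isomorphism, using the characterization of $\SH(F)^{\rm veff}_{\ge 1}$ via effective homotopy sheaves $\pis_0(-)_{\le 0}^{\rm eff}$.

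The main obstacle is the $\pis_1$-surjectivity in characteristic $2$. This step needs the lift of $\eta_\Top$ and the behaviour of $\pis_1\unit_F$ there, which is why Proposition~\ref{prop:pi1kqchar2} is used to reduce it to motivic cohomology, where the generator can be detected explicitly.
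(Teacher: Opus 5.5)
Your reduction to fields, the characteristic $\neq 2$ case, and the $\pis_0$-isomorphism in characteristic $2$ agree with the paper. The $\pis_1$-surjectivity in characteristic $2$, which is the real content there, does not work.

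You describe $\pi_{1+(w)}\kq_F\iso H^{-w-1,-w}(F)$ as ``$\KMil/2$-type classes, $\eta_\Top$ times Milnor symbols''. This is false. By Geisser--Levine, $h^{p,q}(F)=0$ for $p\neq q$, so the Bockstein sequence shows that $H^{n-1,n}(F)$ is uniquely $2$-divisible. These are groups like $K_3^{\mathrm{ind}}(F)\iso H^{1,2}(F)$; for example $\pi_{1-(2)}\kq_{\FF_2}\iso H^{1,2}(\FF_2)\iso\Z/3$. Since $2\eta_\Top=0$, every multiple of $\eta_\Top$ maps to zero in $\pi_1\kq_F$.

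Nor can $\eta_\Top$ map to ``the generator of the $\s_0$-part''. We have $\pi_{1+(0)}\s_0\unit=H^{-1,0}=0$, and $\eta_\Top$ is detected in the first slice by $-1\in\mu_2(F)=h^{0,1}(F)$, which vanishes in characteristic $2$ (compare Remark~\ref{rem:etatopchar2}, where $\eta_\Top=0$ in $\pi_{1+(0)}\kq_F$). So your mechanism produces none of the classes that have to be hit. The same description in your Step 2 is also inaccurate, but there you only cite the result.

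What is actually needed is surjectivity of $\pi_1\unit_F\to\pi_1\s_0\unit_F=\pi_1\HZ_F$. Together with $\s_0\unit_F\simeq\s_0\kq_F$ and Proposition~\ref{prop:pi1kqchar2}, this gives surjectivity onto $\pi_1\kq_F$. The paper proves it through $\unit_F/\eta$:
\begin{enumerate}
\item Since $\eta$ maps to $0$ in $\HZ_F$, the map factors over $(\pi_1\unit_F)/\eta$. This group sits in the exact sequence $0\to(\pi_1\unit_F)/\eta\to\pi_1(\unit_F/\eta)\to{}_\eta\pi_0\unit_F\to 0$.
\item The $\eta$-torsion of $\KMW(F)$ is generated by $\hyp=2$, and any lift of $\hyp$ to $\pi_{1+(1)}(\unit_F/\eta)$ maps to $\pi_{1+(1)}\HZ_F=0$.
\item $\f_1(\unit_F/\eta)$ is $1$-connective by the proof of \cite[Lemma 3.24]{rso.oneline}, so $\pi_1(\unit_F/\eta)\to\pi_1\s_0(\unit_F/\eta)$ is onto.
\end{enumerate}

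Separately, your $1$-very effectivity argument in characteristic $2$ omits a necessary input. A very effective spectrum with vanishing $\pis_0$ need not be $1$-very effective: $\Sigma^{2}\HZ_F$ is very effective and $2$-connective, but its zero slice is nonzero. You also need the zero-slice equivalence $\s_0\unit_F\simeq\s_0\kq_F$ from \cite[Corollary 4.4]{kro}. Combined with the $\pis_0$-isomorphism, this yields $\vs_0\unit_F\simeq\vs_0\kq_F$ via \cite[Lemma 3.11(2)]{bachmann.hermitian}.
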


\begin{proof}
  By \cite[Lemma B.1]{bachmann-hoyois.norms}, it suffices to prove 
  the statement over the Dedekind scheme $\mathcal{D}$.
  With \cite[Prop.~B.3]{bachmann-hoyois.norms},
  it then suffices to prove it over fields.
  For fields of characteristic not two, the
  statement on very effectivity follows from
  \cite{bachmann.hermitian}. For fields of characteristic not two,
  the statement on connectivity follows from Morel's theorem on the
  zeroth Milnor-Witt stem of the motivic sphere spectrum
  (which also holds for fields of characteristic two),  
  and the surjectivity on the first Milnor-Witt
  stem \cite[Theorem 4.1]{rso.twoline}.
  
  It remains to deal with a field $F$ of characteristic two.
  Regarding connectivity: As explained in Proposition~\ref{prop:kmw-kq-char-2},
  the composition $\KMW(F)\to \pi_0\unit_F\to \pi_0\kq_F$ is an isomorphism,
  in which the first arrow depicts an isomorphism by Morel's Theorem.
  It follows that $\pi_0\unit_F\to \pi_0\kq_F$ is an isomorphism.
  In order to conclude that
  the cofiber of the unit map $\unit_F\to \kq_F$ is $2$-connective, it suffices
  to prove that $\pi_1\unit_F\to \pi_1\kq_F$ is surjective.
  Proposition~\ref{prop:pi1kqchar2} says that $\pi_1\kq_F\to \pi_1\s_0\kq_F$
  is an isomorphism.
  Since the unit $\unit_F\to \kq_F$ induces an equivalence on the zero
  slices by \cite[Corollary 4.4]{kro}, it remains to prove that
  $\pi_1\unit_F\to\pi_1\s_0\unit_F$ is surjective.
  As stated before, $\eta$ is zero on $\s_0\unit$, thus this map
  factors over $(\pi_1\unit_F)/\eta\pi_1\unit_F$.
  This $\KMW(F)$-module features in the short exact sequence
  \[ 0 \to (\pi_1\unit_F)/\eta\pi_1\unit_F\to \pi_1 \unit_F/\eta
    \to {}_\eta\pi_0\unit_F \to 0 \]
  given for example in \cite[(3.2)]{r.endo}, where the last module
  is the $\eta$-torsion in $\pi_0\unit_F\iso \KMW(F)$.
  By the exact sequence displayed in the top row of
  diagram~(\ref{eq:kmw-wood}) for $R=F$,
  it is generated as a $\KMW(F)$-module by an element
  $\hyp=2\in \pi_{0+(1)}\unit_F$.
  As in \cite[Lemma 3.4]{r.endo}, this element lifts to
  an element $\hyp^\prime \in \pi_{1+(1)}\unit_F/\eta$.
  Under the map $\unit_F/\eta\to \s_0(\unit_F/\eta)\simeq \s_0\unit_F$
  to its zero slice, any lift $\hyp^\prime\in \pi_{1+(1)}\unit_F/\eta$
  must map to $0$ in $\pi_{1+(1)}\s_0(\unit_F/\eta)=0$.
  The same then holds for any $\KMW$-multiple.
  Hence if $\pi_1\unit_F/\eta \to \pi_1\s_0(\unit_F/\eta)$ is
  surjective, then already $(\pi_1\unit_F)/\eta\pi_1\unit_F\to
  \pi_1\s_0(\unit_F/\eta)$ is surjective.
  The proof of \cite[Lemma 3.24]{rso.oneline} implies that
  $\f_1(\unit_F/\eta)$ is $1$-connective, whence
  $\pi_1\unit_F/\eta \to \pi_1\s_0(\unit_F/\eta)$ is surjective.
  This implies that $\pi_1\unit_F\to \pi_1\s_0\unit_F$ is surjective.
  Thus the cofiber of $\unit\to \kq$ is $2$-connective.

  Regarding very effectivity, \cite[Lemma 3.11(2)]{bachmann.hermitian}
  applies to show that $\vs_0(\unit_F)\to \vs_0(\kq_F)$ is an equivalence.
  This uses the already established isomorphism
  $\pis_0\unit_F\xrightarrow{\iso} \pis_0\kq_F$ and the equivalence
  $\s_0(\unit_F)\xrightarrow{\simeq} \s_0(\kq_F)$ on zero slices
  \cite[Corollary 4.4]{kro}.
  Hence the cofiber of $\unit_F\to \kq_F$ is $1$-very effective.
\end{proof}

\begin{theorem}\label{thm:pi0-sphere-z}
  Let $R\in \{\Z,\Z_{(p)}\}$.
  The unit map $\unit_R\to \kq_R$ induces isomorphisms
  $\pi_{s+(\star)}\unit_R\to \pi_{s+(\star)}\kq_R$ for all $s\leq 0$
  and a surjection for $s=1$.
  In particular, $\pi_{s+(\star)}\unit_R$ is trivial for $s<0$,
  $\pi_{0+(\star)}\unit_{\Z_{(p)}}$ is the naive Milnor-
  Witt $K$-theory of $\Z_{(p)}$ if $p>5$,
  and $\pi_{0+(\star)}\unit_\Z$ is the naive Milnor-Witt $K$-theory of $\Z$.
\end{theorem}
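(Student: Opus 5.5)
Let $\C$ denote the cofiber of the unit map $\unit_R\to \kq_R$. By Theorem~\ref{thm:sphere-kq-eff}, applied with $\mathcal{D}=S=\Spec(R)$ (a Dedekind scheme), $\C$ is $2$-connective. The long exact sequence of homotopy groups
\[ \dotsm \to \pi_{s+1+(\star)}\C \to \pi_{s+(\star)}\unit_R \to \pi_{s+(\star)}\kq_R \to \pi_{s+(\star)}\C \to \dotsm \]
then reduces everything to vanishing of $\pi_{s+(\star)}\C$ in a range. The issue is that connectivity over a base of Krull dimension $d=1$ only gives vanishing of homotopy groups in degrees $s<2-1=1$ via \cite[Theorem V]{druzhinin.conn}. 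So naively one obtains $\pi_{s+(\star)}\C=0$ for $s\leq 0$. This gives surjectivity of $\pi_{s+(\star)}\unit_R\to\pi_{s+(\star)}\kq_R$ for $s\leq 0$ and injectivity for $s\leq -1$. That range is one short at $s=0,1$.

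To gain the missing degree I would use the localization cofiber sequence rather than the crude Druzhinin bound. Write $\C$ over $\Spec(R)$ and recall the key point in the proof of \cite[Theorem V]{druzhinin.conn}: the loss of one degree comes from closed points, through $i_\ast i^!$. Concretely, I would prove $\pi_{1+(\star)}\C=0$ (which yields injectivity at $s=0$) and $\pi_{1+(\star)}\C=0$ again for surjectivity at $s=1$. For this it suffices to know $\pi_{1+(\star)}\C_R\to \pi_{1+(\star)}\C_{\Q}$ is injective and the target vanishes. Over $\Q$ (a field), $2$-connectivity gives $\pis_1\C_\Q=0$ by Morel. For the injectivity, use the Gysin/localization sequence $i_\ast i^!\C\to \C\to j_\ast j^\ast\C$ along the closed points $i$, together with a purity statement $i^!\C\simeq \Sigma^{-1-(1)}i^\ast\C$. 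For $\kq$ this is Proposition~\ref{prop:absolute-purity-kq}, and for $\unit$ absolute purity is known over such Dedekind bases (for instance via Déglise–Jin–Khan in mixed characteristic, or after $\eta$-inversion via Corollary~\ref{cor:absolute-purity-etainv} and $\eta$-completion via $\HZ$-type purity \cite{frankland-spitzweck}). Then $\pi_{1+(\star)}i_\ast i^!\C\iso\pi_{2+(\star+1)}i^\ast\C$, which vanishes since $i^\ast\C$ is $2$-connective over a field (absoluteness of $\kq$ from Proposition~\ref{prop:kq-absolute}). Hence $\pi_{1+(\star)}\C_R$ injects into $\pi_{1+(\star)}\C_\Q$ (more precisely into a colimit over open subschemes), which is zero. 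For $\Z$, where infinitely many primes occur, I would pass to the colimit over finite sets of inverted primes, which commutes with homotopy groups.

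Once the isomorphisms for $s\leq0$ and the surjection for $s=1$ are in hand, the remaining claims follow from Theorem~\ref{thm:pi0kq-integers}. There $\pi_{s}\kq_R=0$ for $s<0$, and $\KMW_\star(R)\to\pi_{0-(\star)}\kq_R$ is an isomorphism for $R=\Z$ and for $R=\Z_{(p)}$ with $p>5$. By Theorem~\ref{thm:milnor-witt-ktheory}, the composite $\KMW_\star(R)\to \pi_{0-(\star)}\unit_R\to \pi_{0-(\star)}\kq_R$ is the canonical map, so the first arrow is an isomorphism.

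The main obstacle is the purity step for the sphere, equivalently for $\C$, at closed points of residue characteristic $2$. If absolute purity for $\unit$ is not available, I would instead split $\C$ via the $\eta$-arithmetic square. The $\eta$-inverted part can be handled by Corollary~\ref{cor:absolute-purity-etainv}. The $\eta$-complete part can be reduced through $\C/\eta$, which is built from $\HZ$-modules via slices, where \cite{frankland-spitzweck} applies.
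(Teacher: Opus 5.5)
\textbf{There is a gap.} Your reduction is the right one. Theorem~\ref{thm:sphere-kq-eff} and \cite[Theorem V]{druzhinin.conn} give $\pi_{s+(\star)}\C_R=0$ for $s\le 0$. This yields the isomorphisms for $s\le -1$ and surjectivity at $s=0$, and with Theorem~\ref{thm:pi0kq-integers} it gives $\pi_{s+(\star)}\unit_R=0$ for $s<0$. What remains, injectivity at $s=0$ and surjectivity at $s=1$, is exactly $\pi_{1+(\star)}\C_R=0$. The final identification with $\KMW_\star(R)$ is as in the paper.

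The step meant to gain this last degree is off by one. Granting purity, $\pi_{1+(w)}i_\ast i^!\C\iso\pi_{2+(w+1)}i^\ast\C$. But $2$-connectivity of $i^\ast\C\simeq\C_{\FF_p}$ only kills $\pis_s$ for $s\le 1$, not $s=2$. This group is genuinely nonzero: over a field $F$ the long exact sequence shows that $\pi_{2+(\star)}\C_F$ surjects onto $\ker(\pi_{1+(\star)}\unit_F\to\pi_{1+(\star)}\kq_F)$, which contains the class of $\nu$.

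So injectivity of $\pi_{1+(\star)}\C_R\to\pi_{1+(\star)}j_\ast j^\ast\C$ does not follow. What you would actually need is surjectivity of the boundary map $\pi_{2+(\star)}j_\ast j^\ast\C\to\pi_{1+(\star)}i_\ast i^!\C$. That is a residue-type statement comparing $\pi_2\C$ over the fraction field with $\pi_2\C$ over the residue fields, involving the $\nu$-family and the cokernel of $\pi_2\unit\to\pi_2\kq$. This is real input about the first and second stems, not a consequence of connectivity.

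Independently, $i^!\C\simeq\Sigma^{-1-(1)}i^\ast\C$ needs purity for $\unit$ itself, which is not available in mixed characteristic. The paper proves purity only for $\kq$ (Proposition~\ref{prop:absolute-purity-kq}) and for $\unit[\eta^{-1}]$ (Corollary~\ref{cor:absolute-purity-etainv}). Your $\eta$-complete fallback via slices of $\C/\eta$ would require slice convergence and slice information over $\Z$ that is not established here.

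\textbf{The paper's route.} The paper does not localize. From $\pi_{s+(\star)}\unit_R=0$ for $s<0$ it notes that anything generated by spheres $S^{s+(w)}$ with $s\ge n$ has vanishing $\pi_{<n}$. It then passes to $\unit_R\to\kq_R^\cell\to\C_R^\cell$ and uses that $(-)^\cell$ ``preserves connectivity'' to conclude $\pi_{1+(\star)}\C_R\iso\pi_{1+(\star)}\C_R^\cell=0$.

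Be aware that this step is your missing degree in disguise and cannot be purely formal. Since $\pi_{\ast+(\star)}E^\cell\iso\pi_{\ast+(\star)}E$, the same reasoning would give $\pi_{1+(\star)}E=0$ for every $2$-connective $E$ over $\Z$. That fails for $E=\Sigma^{2+(0)}\Sigma^\infty_+\Spec\Z[1/5]$. The localization sequence for $\Spec\FF_5\hookrightarrow\Spec\Z$ embeds the cokernel of $\pi_{0-(1)}\unit_\Z\to\pi_{0-(1)}\unit_{\FF_5}\iso\KMW_1(\FF_5)$ into $\pi_{1-(1)}E$. This cokernel surjects onto $\FF_5^\times/\{\pm1\}\neq 0$, by comparison with $\pi_{0-(1)}\HZ_\Z\iso\{\pm1\}$ and $\pi_{0-(1)}\HZ_{\FF_5}\iso\FF_5^\times$.

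So on either route, $\pi_{1+(\star)}\C_R=0$ needs an argument using the specific structure of $\C_R$. In your set-up, that is precisely the surjectivity of the boundary map above.
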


\begin{proof}
  Let $\C_R$ denote the cone on the unit map $\unit_R\to \kq_R$.
  There results a long exact sequence of homotopy groups ending
  in
  \[ \dotsc \to \pi_{1+(\star)}\C_R \to \pi_{0+(\star)}\unit_R
    \to \pi_{0+(\star)}\kq_R \to \pi_{0+(\star)}\C_R \to
    \pi_{-1+(\star)}\unit_R \to 0 \]
  because of Theorem~\ref{thm:pi0kq-integers} and
  \cite[Theorem V]{druzhinin.conn}.
  By Theorem~\ref{thm:sphere-kq-eff}, $\C$ is $2$-connective.
  Hence $\pi_{0+(\star)}\C_R=0$, showing that $\pi_{-1+(\star)}\unit_R=0$.
  Thus the $0$-connective motivic
  sphere spectrum over $R$ has the ``correct'' homotopy group connectivity.
  The same is then true for every
  cellular motivic spectrum over $R$.
  The unit $\unit_R\to \kq_R$ factors over the cellularization, giving
  a cofiber sequence
  \[ \unit_R\to \kq_R^\cell \to \C_R^\cell \]
  of cellular motivic spectra over $R$.
  Since $(-)^\cell$ preserves connectivity,
  the homotopy group connectivity of $\C_R^\cell$ coincides with
  its $2$-connectivity.
  This implies that $\pi_{1+(\star)}\C_R^\cell=0$, whence
  $\pi_{1+(\star)}\unit_R\to \pi_{1+(\star)}\kq_R^\cell\iso \pi_{1+(\star)}\kq_R$
  is surjective and
  $\pi_{0+(\star)}\unit_R\to \pi_{0+(\star)}\kq_R^\cell\iso \pi_{0+(\star)}\kq_R$
  is an isomorphism.
  The identification with naive Milnor-Witt $K$-theory follows
  from Theorem~\ref{thm:pi0kq-integers}.
\end{proof}

\begin{remark}\label{rem:pi1-sphere-z}
  Identifying the kernel of
  the surjection $\pi_{1-(\star)}\unit_\Z\to \pi_{1-(\star)}\kq_\Z$
  with $\KMil_{2+\star}(\Z)/24$, just as in \cite{rso.oneline, rso.twoline},
  but without inverting any prime number, requires more work.
  If the slices of $\unit_\Z$ have the form conjectured by
  Voevodsky in\cite{voevodsky.open} (which
  is the case for the first three nontrivial
  slices~\cite[Lemma 4.5, Lemma 4.7]{kro}),
  a suitable modification of
  the slice convergence theorem \cite[Theorem 3.50]{rso.oneline}
  would enable one to lift the computation.
  This would also involve a discussion of
  $\pi_{s+(\star)}\unit_\Z[\eta^{-1}]$ for small $s$.
  A noteable difference to the vanishing
  $\pi_1\unit_F[\eta^{-1}]=0$ proved in
  \cite{r.etainv} over fields of characteristic not $2$ is 
  that $\pi_{1+(\star)}\unit_\Z[\eta^{-1}]\iso \pi_{1+(\star)}\kw_\Z
  \iso \Z/2\Z[\eta^{\pm 1}]$ by Corollary~\ref{cor:sphere-etainv-kw-conn} below
  and \cite[Theorem 10.13]{kro}.
  Nevertheless, under the hypothesis that the slices of $\unit_\Z$
  have the expected form, one may
  remove inversion of the characteristic in the
  determination of the first Milnor-Witt stem of the sphere spectrum
  over fields of odd characteristic
  \cite[Theorem 5.5]{rso.oneline}, \cite[Theorem 4.1]{rso.twoline},
  as well as in the version for fields of characteristic $2$ given below in
  Theorem~\ref{thm:pi1-sphere-char2}.
\end{remark}

\begin{remark}\label{rem:refined-enum-geom}
  Theorem~\ref{thm:pi0-sphere-z} implies that any refined enumerative
  counting problem which can be phrased over $\Z$ has a degree in
  $\pi_{0+(0)}\unit_\Z\iso \KMW_0(\Z)\iso \Z\oplus \Z$. Hence
  such a degree
  is the class of a symmetric bilinear form uniquely expressible as
  a linear combination of $\langle 1\rangle$ and the hyperbolic plane $\hyp$.
\end{remark}

The connectivity of the unit map
$\unit\to \kq$ improves after inverting $\eta$ as follows.

\begin{corollary}\label{cor:sphere-etainv-kw-conn}
  Let $S$ be an essentially smooth scheme over a Dedekind scheme $\mathcal{D}$.
  The cofiber of the unit map $\unit_S[\eta^{-1}]\to \kw_S$ is $4$-connective.
\end{corollary}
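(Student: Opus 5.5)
We reduce to fields exactly as in the proof of Theorem~\ref{thm:sphere-kq-eff}. Inverting $\eta$ is a smashing localization compatible with pullback, so the cofiber $\C[\eta^{-1}]$ of $\unit_S[\eta^{-1}]\to \kw_S$ is the $\eta$-localization of the cofiber $\C$ of $\unit_S\to\kq_S$. By \cite[Lemma B.1, Prop.~B.3]{bachmann-hoyois.norms}, connectivity may be checked after pulling back to the Dedekind scheme $\mathcal{D}$ and then to its points. Proposition~\ref{prop:kq-absolute} identifies the pullback of $\kq$, hence of $\kw$, with $\kw_F$ for every residue field $F$. It therefore suffices to show that $\unit_F[\eta^{-1}]\to\kw_F$ has a $4$-connective cofiber for every field $F$. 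Over a field, Morel's connectivity theorem reduces this to two statements. First, $\pis_s\unit_F[\eta^{-1}]\to\pis_s\kw_F$ must be an isomorphism for $s\le 2$. Second, it must be surjective for $s=3$.

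For $s\le 0$ the map is already an isomorphism. Theorem~\ref{thm:sphere-kq-eff} gives that $\C$ is $2$-connective. Inverting $\eta$ is a colimit along $\Sigma^{(1)}$-shifts, which do not change connectivity $s$, so $\C[\eta^{-1}]$ is $2$-connective. Hence the map is an isomorphism on $\pis_s$ for $s\le 0$ and surjective on $\pis_1$. Both sides vanish below $0$. In degree $0$ both sides are $\Witt$: this is Morel's theorem after inverting $\eta$, together with Proposition~\ref{prop:kmw-kq-char-2} in characteristic $2$.

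The remaining work is in $s=1,2,3$, and it splits by characteristic. In characteristic not $2$, Röndigs' computation \cite{r.etainv} gives $\pis_1\unit_F[\eta^{-1}]=\pis_2\unit_F[\eta^{-1}]=0$ and $\pis_3\unit_F[\eta^{-1}]\cong\Witt\{\sigma\eta^{-4}\}$ modulo the relevant $2$-primary part. Meanwhile $\pis_{1,2,3}\kw_F=0$, since $\pis_\ast\kw_F\cong\Witt[\eta^{\pm1},\beta^4]$ is concentrated in degrees divisible by $4$. So the map is an isomorphism for $s\le 2$ and trivially surjective for $s=3$. In characteristic $2$, Proposition~\ref{prop:pikwchar2} gives $\pis_\ast\kw_F\cong\Witt[\eta^{\pm1},\alpha]$ with $\alpha$ in degree $2$. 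The claim then needs three inputs: $\pis_1\unit_F[\eta^{-1}]=0$, an isomorphism $\pis_2\unit_F[\eta^{-1}]\cong\Witt\{\alpha\}$ compatible with the unit, and nothing required on $\pis_3$ because $\pis_3\kw_F=0$. I would obtain these from Theorem~\ref{thm:sphere-etainv-char2}, stated later in the paper, which computes the low $\eta$-inverted stems in characteristic $2$. If I had to argue directly, I would run the $\eta$-inverted slice spectral sequence. In characteristic $2$ one has $h^{p,q}=0$ for $p\neq q$ \cite{geisser-levine.p}, so it degenerates in low degrees. Its $E_1$-terms are built from $h^{\ast,\ast}$ and the Adams--Novikov-type $\mathrm{Ext}$ classes, and the unit to $\kw_F$ hits the $\alpha$-class.

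I expect the main obstacle to be the characteristic $2$ computation of $\pis_2\unit_F[\eta^{-1}]$ and the proof that the unit carries the generator to $\alpha$. To handle it, I would compare with $\Z$ through the absoluteness of $\unit[\eta^{-1}]$. Since $\Witt(F)$ is detected by its image over perfect closures, the mod-$I$ reduction reduces the claim to $\FF_2$. There the $E_1$-page is small, and the identification of $\alpha$ as the unique nonzero class of $\pi_{2+(2)}\s_2\kw_{\FF_2}$ from the proof of Proposition~\ref{prop:pikwchar2} pins down the map.
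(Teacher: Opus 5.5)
Your reduction to fields and your treatment of characteristic different from $2$ match the paper. The characteristic-$2$ step, however, is circular. Theorem~\ref{thm:sphere-etainv-char2} is proved \emph{using} this corollary in two places. Its first sentence obtains the unique lift $\tilde\alpha$ from the $4$-connectivity of the cofiber $\C_F$. Its computation of $\pis_\ast\unit_F[\eta^{-1}]$ runs through the $\kw$-resolution of Lemma~\ref{lem:sphere-etainv-kwresolution}, whose proof needs exactly that $\C$ is $4$-connective. So you cannot import $\pis_1\unit_F[\eta^{-1}]=0$, or the isomorphism $\pis_2\unit_F[\eta^{-1}]\to\pis_2\kw_F$, from there.

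Your fallback does not supply these inputs either. In characteristic $2$ the slices of $\unit_F$ are only known after inverting $2$. But on $\unit_F[\eta^{-1}]$ one has $2=\hyp=0$, so $\unit_F[2^{-1},\eta^{-1}]\simeq\ast$, and the known slice information says nothing about the $\eta$-inverted sphere. The same objection applies to your reduction to $\FF_2$: there the relevant $E_1$-page is not small but simply unknown. Your reduction step via Proposition~\ref{prop:kq-absolute} matches the paper's. Be aware, though, that the characteristic-$2$ case of that proposition is itself proved using this corollary.

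The missing idea is to avoid computing the low $\eta$-inverted stems at all. Instead, use the cooperations $\kw_F\smash\kw_F\simeq\bigoplus_{j\geq0}\Sigma^{4j}\kw_F$ of Proposition~\ref{prop:kw-coop-char2}, in which the unit is the inclusion of the bottom summand. That proposition is proved independently, from the $\SL$-orientation $\MSL\to\kw$ and Bachmann--Hopkins. The argument then runs as follows.
\begin{enumerate}
\item Smashing the cofiber sequence $\unit_F[\eta^{-1}]\to\kw_F\to\C_F$ with $\kw_F$ identifies $\C_F\smash\kw_F$ with $\bigoplus_{j\geq1}\Sigma^{4j}\kw_F$, which is $4$-connective.
\item Smashing the same sequence with $\C_F$ gives a cofiber sequence $\C_F\to\C_F\smash\kw_F\to\C_F\smash\C_F$.
\item Since $\C_F$ is $2$-connective by Theorem~\ref{thm:sphere-kq-eff}, $\C_F\smash\C_F$ is $4$-connective, hence $\C_F$ is $3$-connective.
\item Then $\C_F\smash\C_F$ is $6$-connective, and $\C_F$ is $4$-connective.
\end{enumerate}
The low-degree identifications you wanted to use as input in characteristic $2$ come out of this as consequences.
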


\begin{proof}
  Theorem~\ref{thm:sphere-kq-eff} implies that the cofiber $\C_S$ is
  at least $2$-connective, since inverting $\eta$ preserves connectivity.
  To obtain the higher connectivity, it suffices to prove this over a field
  $F$. For fields of characteristic not $2$, \cite[Theorem 8.3]{r.etainv}
  and the known vanishing $\pis_s\kw_F=0$ for $s$ not divisible by $4$
  implies the desired connectivity. Suppose now that $F$ has
  characteristic $2$. The right unit $\kw_F\hookrightarrow \kw_F\smash \kw_F$
  is the inclusion of the lowest summand in
  the decomposition of the target given in Proposition~\ref{prop:kw-coop-char2}.
  It follows that $\C_F\smash \kw_F$ is $4$-connective. The
  cofiber sequence
  $\C_F\to \C_F\smash \kw_F\to \C_F\smash \C_F \to \Sigma \C_F$,
  in which $\C_F\smash \C_F$ is $4$-connective,
  implies that $\C_F$ is at least $3$-connective. But then $\C_F\smash \C_F$
  becomes $6$-connective, giving that $\C_F$ has the same connectivity
  as $\C_F\smash \kw_F$.
\end{proof}

As a consequence of the connectivity of the unit map
$\unit[\eta^{-1}]\to \kw$, the $\eta$-inverted sphere
$\unit[\eta^{-1}]$ may be described via an Adams-Novikov resolution
based on $\kw$.

\begin{lemma}\label{lem:sphere-etainv-kwresolution}
  Let $S$ be an essentially smooth scheme over a Dedekind scheme $\mathcal{D}$.
  Let $\unit[\eta^{-1}]\to \kw$ denote the unit map.
  Then the canonical map
  \[\unit[\eta^{-1}]\to \holim_{[m]\in \Delta}\kw^{\smash m+1}\]
  is an equivalence.
\end{lemma}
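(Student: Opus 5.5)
The plan is the standard argument for convergence of an Adams-type resolution along a unit map whose cofiber is highly connective, run in $\SH(S)[\eta^{-1}]$. Write $\E:=\unit[\eta^{-1}]$ and let $\C$ be the cofiber of the unit $\E\to\kw$. By Corollary~\ref{cor:sphere-etainv-kw-conn}, $\C$ is $4$-connective. Let $\bar\kw:=\Sigma^{-1}\C$ be the fiber of the unit, so that $\bar\kw$ is $3$-connective. The cosimplicial object $\kw^{\smash\bullet+1}$ is the Amitsur complex of the unit map. Its partial totalizations fit into the familiar identification
\[ \mathrm{fib}\Bigl(\E\to \operatorname{Tot}_m \kw^{\smash \bullet+1}\Bigr)\simeq \bar\kw^{\smash m+1}, \]
where the smash powers are taken over $\E$, which is the unit of $\SH(S)[\eta^{-1}]$. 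This is the usual computation for the canonical resolution of a unital map; see for instance the treatment in Bachmann–Hopkins.

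The key steps, in order, are as follows.

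\textbf{Step 1.} Establish the displayed fiber identification by induction on $m$, using the cofiber sequence $\bar\kw\to\E\to\kw$ smashed with $\bar\kw^{\smash m}$.

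\textbf{Step 2.} Smash products of connective objects are connective with additive connectivity, since $\SH(S)_{\geq \{n\}\times\Z}\smash \SH(S)_{\geq\{m\}\times \Z}\subset \SH(S)_{\geq \{n+m\}\times\Z}$ by the generating description. Hence $\bar\kw^{\smash m+1}$ is $3(m+1)$-connective.

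\textbf{Step 3.} Pass to the limit:
\[ \mathrm{fib}\Bigl(\E\to\holim_\Delta\kw^{\smash\bullet+1}\Bigr)\simeq \holim_m \bar\kw^{\smash m+1}. \]
It then suffices to show that this inverse limit of increasingly connective objects vanishes.

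For Step 3 I would test against a generator $\Sigma^{s+(w)}X_+$ with $X\in\Sm_S$. Over $S$ of finite Krull dimension $d$, the presheaf version of Druzhinin's theorem (\cite[Theorem V]{druzhinin.conn}) gives $[\Sigma^{s+(w)}X_+,\mathsf{F}]=0$ whenever $\mathsf{F}$ is $n$-connective and $s<n-d-\dim X$; the paper uses this for $S$ itself, and I would apply it over $X$. The Milnor sequence then shows that both the $\lim$ and the $\lim^1$ terms for the tower $\bar\kw^{\smash m+1}$ vanish in each fixed degree, because the groups are eventually zero. These generators detect equivalences, so the fiber is contractible.

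\textbf{Main obstacle.} I expect the hardest part to be this final vanishing. Connectivity in the sense of $\SH(S)_{\geq n}$ does not by itself give vanishing of homotopy groups over a positive-dimensional base, so one needs the dimension-shifted vanishing uniformly in $X$. If Druzhinin's bound only holds over local schemes, I would instead reduce to points. The idea is to check the equivalence after $i_s^\ast$ for each point $s$, using \cite[Prop.~B.3]{bachmann-hoyois.norms}, where Morel's connectivity theorem over fields applies directly. Doing so requires knowing that $i_s^\ast$ commutes with the holim. I would handle this by noting that on each finite stage the fiber $\bar\kw^{\smash m+1}$ is compatible with pullback, since $\kw$ and $\unit[\eta^{-1}]$ pull back correctly by Proposition~\ref{prop:kq-absolute}. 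Because the tower's fibers become arbitrarily connective, the limit is then computed stagewise in any fixed range of degrees.
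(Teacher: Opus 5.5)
Your argument is correct, but it is organized differently from the paper's. The paper never passes to partial totalizations. It uses only two facts about the comparison map $\phi_\E\colon \unit[\eta^{-1}]\smash\E\to\holim_{\Delta}\kw^{\smash m+1}\smash\E$: it is natural in $\E$, and it is an equivalence when $\E$ is a $\kw$-module. It starts from the assertion that $\phi_{\C^{\smash\ell}}$ is an isomorphism on (vanishing) homotopy sheaves for $s<4\ell-2$. It then pushes this range upward by playing the cofiber sequences $\C^{\smash\ell}\to\kw\smash\C^{\smash\ell}\to\C^{\smash\ell+1}$ against each other, until for $\ell=0$ the range is unbounded.

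You instead identify $\mathrm{fib}(\unit[\eta^{-1}]\to\mathrm{Tot}_m)$ with $\bar{\kw}^{\smash m+1}$ and use that connectivity adds under $\smash$. You then show $\holim_m\bar{\kw}^{\smash m+1}\simeq\ast$ by a Milnor sequence, testing against $\Sigma^{s+(w)}X_+$ and applying the presheaf form of Druzhinin's theorem over $X$. This is legitimate: pullback along the smooth map $X\to S$ preserves connectivity, and $X$ is noetherian of finite Krull dimension.

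The paper's route is shorter and formal once a connectivity estimate is in hand. Yours makes that estimate explicit. The bootstrap's starting claim requires knowing that $\holim_\Delta\kw^{\smash\bullet+1}\smash\C^{\smash\ell}$ has no homotopy in a range growing with $\ell$. This is not automatic, since totalizations of connective cosimplicial objects can lose connectivity, and your tower identification is exactly what controls it.

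Your fallback of reducing to points is unnecessary, and as written it is not justified: $i_s^\ast$ does not commute with the limit a priori. Since the paper quotes the presheaf vanishing for locally noetherian bases of finite Krull dimension, you can simply drop it.
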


\begin{proof}
  Observe first that the map is sufficiently natural in the sense
  that for any motivic spectrum $\E$, there is a canonical map
  \[\phi_\E\colon \unit[\eta^{-1}]\smash \E\to
    \holim_{[m]\in \Delta}\kw^{\smash m+1}\smash \E \]
  natural in $\E$. If $\E$ is a $\kw$-module, $\phi_\E$ is an equivalence,
  since in that case the homotopy limit identifies with $\E$ via the action
  map $\kw\smash \E\to \E$. Let $\C$ denote the cofiber of
  $\unit[\eta^{-1}]\to \kw$, which is $4$-connective by
  Corollary~\ref{cor:sphere-etainv-kw-conn}. Then $\phi_{\C^{\smash \ell}}$
  induces an isomorphism of
  (trivial) homotopy sheaves $\pis_{s+(\star)}$ for $s<4\ell -2$.
  The cofiber sequence
  \[ \C^{\smash \ell} \to \kw \smash \C^{\smash \ell}\to \C^{\smash \ell +1}
    \to \Sigma C^{\smash \ell}\]
  involving a free $\kw$-module implies that the upper bound may be
  increased to $4\ell +2$. But then also for $\C^{\smash \ell +1}$,
  thus even to $4\ell +4$ for $\C^{\smash \ell}$ and so on,
  giving the statement.
\end{proof}

The $\kw$-resolution of $\unit[\eta^{-1}]$ has two applications,
a homotopy sheaf computation and a purity statement.
In the statement of the next theorem, the third algebraic Hopf
map $\sigma\in \pi_{3+(4)}\unit$ appears.

\begin{theorem}\label{thm:sphere-etainv-char2}
  Let $F$ be a field of characteristic $2$. Then
  there exists a unique lift $\tilde{\alpha}\in \pi_{2+(2)}\unit_F[\eta^{-1}]$
  of $\alpha\in \pi_{2+(2)}\kw_F$, giving an isomorphism
  \[ \pis_{\ast+(\star)}\unit_F[\eta^{-1}] \iso
    \Witt[\eta^{\pm 1},\tilde{\alpha},\sigma]/(\sigma^2).\]
\end{theorem}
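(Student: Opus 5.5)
The plan is to compute $\pis_{\ast+(\star)}\unit_F[\eta^{-1}]$ from the $\kw$-resolution of Lemma~\ref{lem:sphere-etainv-kwresolution}. The main input is the cooperation computation of Proposition~\ref{prop:kw-coop-char2}. Since $\eta$ is inverted, all homotopy sheaves are $\eta$-periodic, so it suffices to determine $\pis_s$ for integers $s$ as modules over $\pis_0=\Witt$. The resolution gives a descent (Adams-Novikov type) spectral sequence whose $E_1$-page is built from $\pis_\ast(\kw_F^{\smash m+1})$. By Proposition~\ref{prop:kw-coop-char2}, each $\kw^{\smash m+1}$ is a sum of shifts $\Sigma^{4j}\kw_F$. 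Hence by Proposition~\ref{prop:pikwchar2} every term is a sum of copies of $\Witt[\eta^{\pm1},\alpha]$, all in even degrees. The $E_2$-page is then $\operatorname{Ext}$ over a Hopf algebroid $(\Witt[\alpha],\Gamma)$ with $\Gamma=\pis_\ast(\kw\smash\kw)$, whose generators come from the $\MSL$-classes $\phi(e_j)$.

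The first step is to identify this Hopf algebroid. Over a perfect field $F$ of characteristic $2$, the Witt group is $\FF_2$ and $2=\hyp=0$ after inverting $\eta$. I expect the Hopf algebroid to be the characteristic $2$ reduction of the one describing $\mathsf{ko}_\ast\mathsf{ko}$ modulo $2$, in the form used in \cite{bachmann-hopkins}. There is one difference: $\alpha$ has degree $2$, and $\alpha^2=\beta^4$. My guess is that the coaction fixes $\alpha$, since $\alpha$ is unique and characterised by being nonzero. The coaction on $\beta^4=\alpha^2$ then differs from the identity only by the primitive class detecting $\sigma$. If so, the $E_2$-term is $\Witt[\alpha]\otimes\operatorname{Ext}$ of a primitively generated exterior-type coalgebra, namely $\Witt[\eta^{\pm1},\alpha]\otimes\Lambda(\sigma)$, with $\sigma$ in filtration $1$ and stem $3$ (in the $\eta$-periodic grading, $\sigma\in\pi_{3+(4)}$). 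Everything in filtration $\geq 2$ would vanish, because the relevant $\operatorname{Ext}^{\ge2}$ groups are zero.

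If $E_2$ is concentrated in filtrations $0$ and $1$, the spectral sequence collapses and there are no extension problems beyond $\sigma^2$. Convergence follows from Lemma~\ref{lem:sphere-etainv-kwresolution}. The class $\alpha$ in filtration $0$ survives, which gives the unique lift $\tilde\alpha$. Uniqueness holds because $\pis_{2+(2)}$ has no filtration-$1$ contribution: filtration $1$ only contributes in stems $\equiv 3 \pmod 4$ shifted by $\alpha$-powers, that is, in odd stems. The filtration-$1$ class in stem $3$ is the image of the Hopf map $\sigma$. I would detect it by checking that the cone of $\sigma\eta^{-4}$ realises the first cell attachment, in line with the cofiber $\C$ being exactly $4$-connective in Corollary~\ref{cor:sphere-etainv-kw-conn} with $\pis_3\unit_F[\eta^{-1}]\neq0$. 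This step should use that $\sigma$ is the Hopf construction on octonion-type multiplication, which makes sense over $\Z$. Finally, $\sigma^2=0$, since it would lie in filtration $2$, which vanishes, and stem $6$ in filtration $\le 1$ is even and comes only from $\Witt\{\tilde\alpha^3\}$, which is not hit by a filtration-$2$ element.

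The main obstacle is the precise Hopf algebroid computation: showing that $\alpha$ is primitive and that the higher $\operatorname{Ext}$ vanishes in characteristic $2$. Over fields of characteristic not $2$, the vanishing of $\pis_s\kw_F$ for $4\nmid s$ kills such questions, but here the extra class $\alpha$ could in principle support a nontrivial coaction. My first attempt would be to compare with the topological real points. Failing that, I would base change the computation over $\Z$ from Corollary~\ref{cor:kw-coop} and \cite{bachmann-hopkins} along $\Z\to\FF_2$, using Proposition~\ref{prop:kq-absolute}, and use the uniqueness of $\alpha$ to force the coaction formula. For imperfect $F$, I would pass to stalks and use that these are Witt-sheaf statements.
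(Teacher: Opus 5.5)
\textbf{There is a genuine gap: the $E_2$-page of your spectral sequence is guessed, not computed.}

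Your framework is sound, and it is also the paper's: the $\kw$-resolution of Lemma~\ref{lem:sphere-etainv-kwresolution}, with $E_1$-term read off from Propositions~\ref{prop:kw-coop-char2} and~\ref{prop:pikwchar2}. If the $E_2$-page is concentrated in filtrations $0$ and $1$, then collapse, uniqueness of $\tilde\alpha$ and $\sigma^2=0$ follow as you say. (Uniqueness of $\tilde\alpha$ also follows directly from the $4$-connectivity in Corollary~\ref{cor:sphere-etainv-kw-conn}.) But that $E_2$-page is the whole content of the theorem.

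\emph{The available data do not determine $\operatorname{Ext}$.} You have $\pis_\ast\kw_F$ and the splitting $\kw_F\smash\kw_F\simeq\bigoplus_j\Sigma^{4j}\kw_F$. The $\operatorname{Ext}$ groups depend on the coproduct of the generators $x_j$ of the summands $\Sigma^{4j}\kw_F$, which the splitting does not record. If, say, all $x_j$ were primitive, then $[x_1|x_1]$ would be a nonzero class in $\operatorname{Ext}^2$. Killing it requires $x_1\otimes x_1$ to occur in $\Delta(x_2)$, as in a divided power coalgebra. Your phrase ``Ext of a primitively generated exterior-type coalgebra'' points the wrong way: over $\FF_2$ that Ext is polynomial, not exterior.

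\emph{Your heuristic is also inconsistent.} Primitivity of $\alpha$ is automatic, because the summands with $j\geq 1$ contribute nothing to $\pi_2(\kw_F\smash\kw_F)$. But then $\eta_R(\beta^4)=\eta_R(\alpha)^2=\alpha^2$, so $\beta^4$ is primitive too. Hence $\sigma$ cannot arise as a correction term in the coaction on $\beta^4$. What exists for degree reasons is the cocycle $[x_1]\in\operatorname{Ext}^{1,4}$, and identifying it with $\sigma$ still needs an argument.

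\emph{Your proposed remedies do not supply the missing input.} Comparison with real points is unavailable, since fields of characteristic $2$ have no orderings. Base change from $\Z$ could be made to work, but only with explicit characteristic-zero information about the Hopf algebroid, for example what underlies the two-term resolution of Bachmann--Hopkins. You do not supply this. Your passage to stalks also does not reduce to perfect fields: stalks of $\Witt$ are Witt rings of local rings, not of perfect fields.

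\emph{How the paper closes the gap.} It supplies the missing input at the level of the answer rather than of $\operatorname{Ext}$. First it reduces to a field whose Witt ring is $\Z/2$, using that the Witt ring of a field of characteristic $2$ is local with maximal ideal the fundamental ideal. It then compares the $\kw$-resolution with the one over $\CC$, where
\[
\pi_{\ast+(\star)}\unit_\CC[\eta^{-1}]\iso\Witt(\CC)[\eta^{\pm1},\tilde{\beta^4},\sigma]/(\sigma^2)
\]
is known by Andrews--Miller and Guillou--Isaksen. The only difference in characteristic $2$ is the extra generator $\alpha$ with $\alpha^2=\beta^4$. Without this comparison, or an actual computation of the coproduct on the $x_j$, your spectral sequence argument does not close.
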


\begin{proof}
  The connectivity in Corollary~\ref{cor:sphere-etainv-kw-conn} shows
  the unique lifting of $\alpha$ to $\tilde{\alpha}$.
  It suffices to provide the isomorphism on global sections of 
  $\pi_{\ast+(\star)}$. The Witt ring of a field of characteristic $2$
  is a local ring, with maximal ideal the fundamental one. Thereby
  it suffices to consider the case of a field with Witt ring $\Z/2\Z$.
  The $\kw$-resolution given in Lemma~\ref{lem:sphere-etainv-kwresolution}
  determines $\unit[\eta^{-1}]$ in general, and over the complex numbers
  with Witt ring $\Z/2\Z$ gives the answer
  \[ \pi_{\ast+(\star)}\unit_\CC[\eta^{-1}] \iso
    \Witt(\CC)[\eta^{\pm 1},\tilde{\beta^4},\sigma]/(\sigma^2)\]
  by \cite{andrews-miller,guillou-isaksen.etainv-complex}.
  Here $\tilde{\beta^4}$ is the unique lift of the corresponding
  polynomial generator for $\pi_{\ast+(\star)}\kw_\CC$.
  The result follows.
\end{proof}

One consequence is that the identification of the first Milnor-Witt stem
of the motivic sphere spectrum now works over fields of characteristic $2$.

\begin{theorem}\label{thm:pi1-sphere-char2}
  Let $F$ be a field of characteristic $2$. Then the unit map
  $\pi_1\unit_F\to \pi_1\kq_F$ is surjective, and its kernel
  coincides with $\KMil_{\star+2}/12$ generated by the second
  algebraic Hopf map after inverting~$2$.
\end{theorem}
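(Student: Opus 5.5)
Surjectivity is already contained in the proof of Theorem~\ref{thm:sphere-kq-eff}: the cofiber $\C_F$ of $\unit_F\to\kq_F$ is $2$-connective, so $\pi_1\unit_F\to\pi_1\kq_F$ is onto. The kernel is the image of $\pi_{2}\C_F\to\pi_1\unit_F$, so the task is to compute the bottom homotopy sheaf $\pis_2\C_F$ and the boundary map. I would follow the strategy of \cite{rso.oneline, rso.twoline} for fields of odd characteristic and split $\C_F$ via the $\eta$-arithmetic square into an $\eta$-complete and an $\eta$-inverted part.

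For the $\eta$-inverted part, Corollary~\ref{cor:sphere-etainv-kw-conn} shows that $\C_F[\eta^{-1}]$ is $4$-connective. Hence $\pi_1\unit_F[\eta^{-1}]\to\pi_1\kw_F$ and $\pi_2\unit_F[\eta^{-1}]\to\pi_2\kw_F$ are isomorphisms, consistent with Theorem~\ref{thm:sphere-etainv-char2}, where $\pi_1$ vanishes and $\pi_2$ is generated by $\tilde\alpha$. Consequently the $\eta$-periodic part contributes nothing to the kernel. It remains to compute the kernel on the $\eta$-complete side.

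There I would use slices. Since $\eta$-completion agrees with slice completion for $\unit_F$ and $\kq_F$, and the unit induces an equivalence on zero slices by \cite[Corollary 4.4]{kro}, the first place where the slices differ is $\s_1$ and $\s_2$. Their known form, Voevodsky's description of the first slices of $\unit$, restricts the kernel in $\pi_{1+(\star)}$ to classes detected by $\pi_{1}\s_2\unit_F$, namely a summand $\Sigma^{2+(2)}\HZ/12$-type contribution, which after inverting $2$ is $\Sigma^{2+(2)}\HZ/3$ (and $\HZ/12$-ish on $\HZ[1/2]$). Concretely I would identify $\pi_{1+(\star)}$ of the relevant slice piece with $h^{\ast}$ and $H^\ast$ groups of $F$. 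By \cite{geisser-levine.p}, $\HZ/2$ has no positive stems over $F$ and $\HZ$ has torsion-free Milnor groups $\KMil$, giving $\KMil_{\star+2}(F)\otimes\Z/12$ with $2$ inverted, i.e.\ $\KMil_{\star+2}(F)/3$ together with the odd part of $12$.

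Next I would check that the differentials vanish, so that these classes survive and are realized by $\nu\cdot\KMil$. The generator is the second Hopf map $\nu\in\pi_{1+(2)}\unit$, which exists over $\Z$, pulls back to $F$, and maps to zero in $\kq$ because $\pi_{1+(2)}\kq_F=0$ by Proposition~\ref{prop:pi1kqchar2}. Multiplying by $\KMW(F)$ produces the full $\KMil_{\star+2}/12$ after inverting $2$. Injectivity of $\KMil_{\star+2}(F)[1/2]/12\to\pi_1\unit_F[1/2]$ is detected on the slice $E_\infty$-page.

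The main obstacle is slice convergence for $\unit_F$ in characteristic $2$ without inverting $2$: the $2$-primary part is where I expect the argument to break. This is why the statement inverts $2$. There I would rely on the $\eta$-arithmetic square only after inverting $2$, where $\unit[\eta^{-1}][1/2]$ and $\kw[1/2]$ vanish in characteristic $2$ (since $2=\hyp$ kills the $\eta$-inverted theory). In that case the $\eta$-inverted part disappears entirely, and convergence reduces to the $\eta$-complete, or equivalently slice-complete, case handled by \cite[Theorem 3.50]{rso.oneline}.
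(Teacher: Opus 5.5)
Your overall route is the paper's: surjectivity from the $2$-connectivity in Theorem~\ref{thm:sphere-kq-eff}; after inverting $2$ the $\eta$-inverted part vanishes; and the slice spectral sequence for $\unit_F[2^{-1}]$ converges to the $\eta$-completion by \cite[Theorem 3.50]{rso.oneline}, so the kernel sits in the single entry $\pi_{1+(\star)}\s_2\unit_F[2^{-1}]\iso\KMil_{\star+2}(F)/3$. Note that this entry comes from $\Sigma^{1+(2)}\HZ/3$, not $\Sigma^{2+(2)}\HZ/3$. With your indexing the group would be $H^{3-w,2-w}(F;\Z/3)=0$, and the kernel would be zero.

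The genuine gap is that you never show these classes survive. You need the only incoming differential,
\[ d_2\colon \pi_{2+(\star)}\s_0\unit_F[2^{-1}]=\pi_{2+(\star)}\HZ_F[2^{-1}]\longrightarrow \pi_{1+(\star)}\Sigma^{1+(2)}\HZ_F/3, \]
to vanish. You only say you ``would check that the differentials vanish'', and then assert that injectivity is ``detected on the slice $E_\infty$-page''. That is circular: whether $\{u_1,\dotsc,u_n\}\cdot\bar\nu$ is nonzero in $E_\infty$ is exactly the question of this $d_2$.

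The existence of $\nu$ over $\Z$, with $\nu\mapsto 0$ in $\kq_F$, shows only that the kernel is generated by $\nu$, i.e.\ is a quotient of $\KMil_{\star+2}(F)/3$. Even this needs $\nu$ to be detected by the generator of $\pi_{1+(2)}\s_2$ in characteristic $2$, which you do not verify; the paper instead notes that no differential can leave this entry. It cannot rule out that some $\KMil$-multiple of $\nu$ is zero. A direct attack is also unavailable, since the integral source groups $H^{n-2,n}(F;\Z[2^{-1}])$ are not known for general fields.

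The paper closes the gap by mapping to $\unit_F/3$.
\begin{itemize}
\item There the source of $d_2$ becomes $\pi_{2+(\star)}\HZ_F/3$. This is generated as a $\KMil(F)$-module by one class $g_F\in\pi_{2-(2)}\HZ_F/3=H^{0,2}(F;\Z/3)\iso\Z/3$, because $\mu_3^{\otimes 2}$ is the trivial Galois module. This class is pulled back from $\FF_2$.
\item By naturality and $\KMil$-linearity, $d_2(g_F)$ is the image of an element of $\pi_{1-(2)}\Sigma^{1+(2)}\HZ_{\FF_2}/3\iso\KMil_4(\FF_2)/3=0$. So the mod-$3$ differential vanishes identically.
\item The map $\pi_{1+(\star)}\s_2\unit_F[2^{-1}]\to\pi_{1+(\star)}\s_2(\unit_F/3)$ is injective. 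Hence the integral $d_2$ vanishes as well.
\end{itemize}
Only with this step does the kernel equal all of $\KMil_{\star+2}(F)/3$.
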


\begin{proof}
  The surjectivity follows from the
  connectivity statement in Theorem~\ref{thm:sphere-kq-eff}.
  For the identification of the kernel, the slice spectral
  sequence may be used. Note that the form of the slices of
  $\unit_F$ is only known after inverting $2$;
  see \cite{levine.comparison, rso.oneline}. In that case,
  the column of the slice spectral sequence for $\unit_F[2^{-1}]$
  responsible for $\pi_{1}$ consists of a single nonzero
  entry, coming from the $2$-slice
  \[ \s_2\unit_F[2^{-1}] \simeq \Sigma^{1+(2)}\HZ_F/12[2^{-1}]\simeq
    \Sigma^{1+(2)}\HZ_F/3.\]
  No nontrivial slice
  differential can originate from this term.
  Potentially a second slice differential $\pi_2\HZ_F[2^{-1}]\to
  \pi_1\Sigma^{1+(2)}\HZ_F/3$ could hit it.
  Passage from $\unit_F$ to $\unit_F/3$ leads to a commutative
  square of second slice differentials whose source now is
  $\pi_2\HZ_F/3$. This $\KMil$-module is generated by
  an element $g_F\in \pi_{2-(2)}\HZ_F/3 \iso \Z/3\Z$, which lives
  over the prime field $\FF_2$. The image of $g_{\FF_2}$
  lies in the zero group $\pi_1\Sigma^{1+(2)}\HZ_{\FF_2}/3=0$, showing
  its triviality. There results an identification
  $\pi_{1+(\star)}(\unit_F[2^{-1}])^\wedge_\eta \iso \pi_{0+(\star-2)}\HZ_F/3$
  using the convergence statement \cite[Theorem 3.50]{rso.oneline}.
  It remains to observe that $\unit_F[2^{-1},\eta^{-1}]\simeq \ast$
  to conclude the isomorphism
  $\pi_{1+(\star)}\unit_F[2^{-1}]\iso \pi_{1+(\star)}(\unit_F[2^{-1}])^\wedge_\eta$.
  Note that $\pi_1\unit_F[\eta^{-1}]=0$
  by Theorem~\ref{thm:sphere-etainv-char2} already before inverting $2$.
\end{proof}

\begin{remark}\label{rem:etatopchar2}
  The form of Theorem~\ref{thm:pi1-sphere-char2} indicates
  that over a field $F$ of characteristic $2$, the
  topological first Hopf element $\eta_\Top=0\in \pi_{1+(0)} \unit_F$.
  Indeed, the unique nonzero element representing $\eta_\Top$ in
  $\pi_{1+(0)}\s_1\unit$ over $\Z$ (which is ``the same''
  over all fields of characteristic
  not $2$, namely the primitive second root of unity $-1$)
  becomes $0$ over fields of characteristic $2$.
  As another indicator,
  suppose the Hopf relation $12\nu =\eta^2\eta_\Top$ holds
  over $\Z$. It can be established using the
  multiplicative structure on the known slices of $\unit_\Z$,
  up to an element in
  the image of $\pi_{1+(2)}\f_4\unit_\Z\to \pi_{1+(2)}\unit_Z$.
  Ananyevskiy observed that $\nu\in \pi_{1+(2)}\unit_F$ has order
  at most $12$ already unstably.
  Note that $\eta_\Top=0\in \pi_{1+(0)}\kq_F\iso \pi_{1+(0)}\HZ_F$
  by Proposition~\ref{prop:pi1kqchar2}.
\end{remark}

\begin{corollary}\label{cor:absolute-purity-etainv}
  Let $S$ be a connected Dedekind scheme whose fraction field has
  characteristic $0$.
  Let $i\colon Z\hookrightarrow S$ be the closed embedding of a point of
  positive characteristic $p$. Then the purity transformation
  $\Sigma^{-1-(1)}i^\ast\unit_S[\eta^{-1}]\to i^!\unit_S[\eta^{-1}]$
  is an equivalence.
\end{corollary}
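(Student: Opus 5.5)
The plan is to deduce purity for $\unit_S[\eta^{-1}]$ from purity for $\kw_S$ by means of the $\kw$-resolution of Lemma~\ref{lem:sphere-etainv-kwresolution}. First reduce to the local case: passing to the local ring of $S$ at the closed point commutes with $i^\ast$ and $i^!$ (smooth, or essentially smooth, base change), so we may assume that $S$ is the spectrum of a discrete valuation ring of mixed characteristic $(0,p)$. Next, record purity for $\kw_S=\kq_S[\eta^{-1}]$. Proposition~\ref{prop:absolute-purity-kq} gives $\Sigma^{-1-(1)}i^\ast\kq_S\simeq i^!\kq_S$. Inverting $\eta$ is a filtered colimit along multiplication by $\eta$, and both $i^\ast$ and $i^!$ commute with it. For $i^!$ this holds because $i^!$ is the fiber of $\Id\to j_\ast j^\ast$ after applying $i_\ast$, and $j_\ast$ preserves colimits here since $j$ is qcqs and the categories are compactly generated. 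Hence $\Sigma^{-1-(1)}i^\ast\kw_S\simeq i^!\kw_S$.

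Next, extend purity to all smash powers $\kw_S^{\smash m+1}$. Over $\Z$ (and then over $S$ by base change and Proposition~\ref{prop:kq-absolute}), Corollary~\ref{cor:kw-coop} gives $\kw\smash\kw_{(2)}\simeq\bigoplus_{j\geq 0}\Sigma^{4j}\kw_{(2)}$. Away from $2$, the corresponding decomposition comes from \cite[Prop.~7.7]{bachmann-hopkins} via the $\MSL$-orientation; alternatively, $\kw[1/2]$ splits off the Witt part and is handled over $\Z[1/2]$ by the known results. Iterating, each $\kw^{\smash m+1}$ is a sum of shifted copies of $\kw$. The purity transformation is natural and compatible with arbitrary sums: $i^\ast$ obviously preserves sums, and $i^!$ does too, again because $j_\ast$ preserves colimits. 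So purity holds for every $\kw^{\smash m+1}$, compatibly with the cosimplicial structure maps, since the purity transformation is natural in the spectrum.

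Finally, pass to the totalization. By Lemma~\ref{lem:sphere-etainv-kwresolution}, $\unit_S[\eta^{-1}]\simeq\holim_\Delta\kw_S^{\smash\bullet+1}$. The functor $i^!$ is a right adjoint and commutes with this limit. The functor $i^\ast$ does not commute with limits in general, and this is the main obstacle. I would overcome it through the connectivity in the proof of Lemma~\ref{lem:sphere-etainv-kwresolution}: the fiber of $\unit[\eta^{-1}]\to\mathrm{Tot}^n$ is built from $\C^{\smash n+1}$, which is $4(n+1)$-connective by Corollary~\ref{cor:sphere-etainv-kw-conn}. The functor $i^\ast$ preserves connectivity, so $i^\ast\mathrm{Tot}^n$ agrees with $i^\ast\unit[\eta^{-1}]$ in a range growing with $n$, and the tower converges after applying $i^\ast$. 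Here I would use the partial totalizations, which are finite limits and therefore commute with $i^\ast$. On the $i^!$ side, the fiber $i^!$ of a $4(n+1)$-connective spectrum is at least $(4(n+1)-2)$-connective, by the localization sequence and \cite[Theorem V]{druzhinin.conn} (shift by one, plus Krull dimension one). Comparing the finite stages $\Sigma^{-1-(1)}i^\ast\mathrm{Tot}^n\simeq i^!\mathrm{Tot}^n$ then gives isomorphisms on $\pis_s$ for $s$ less than roughly $4n$. Letting $n\to\infty$ yields an isomorphism on all homotopy sheaves of objects in the $\eta$-inverted category over the field $Z$, and hence an equivalence.

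The step needing most care is this connectivity bookkeeping for $i^!$. Druzhinin's bound controls homotopy groups and sheaves only up to a dimension shift, so one should check that the loss is uniform in $n$; it is, since it is a fixed shift of about $2$. With that, the argument closes.
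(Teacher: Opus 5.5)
\textbf{The gap is in your third step.} You need purity for every smash power $\kw_S^{\smash m+1}$ integrally, but Corollary~\ref{cor:kw-coop} only gives $\kw\smash\kw_{(2)}\simeq\bigoplus_{j\geq 0}\Sigma^{4j}\kw_{(2)}$ after localizing at $2$. The Bachmann--Hopkins input behind it is $2$-local. Neither the $\MSL$-orientation nor your vague appeal to ``$\kw[1/2]$ handled over $\Z[1/2]$'' supplies a splitting away from $2$.

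In fact no such splitting exists in general. Over $\mathbb{R}$, Bachmann's real-\'etale theorem gives $\SH(\mathbb{R})[\eta^{-1},1/2]\simeq\SH[1/2]$, and under it $\kw$ corresponds to $ko[1/2]$. Suppose $ko[1/2]\smash ko[1/2]$ decomposed into shifts of $ko[1/2]$ as right modules. Base change along $ko[1/2]\to\HZ[1/2]$ would then make $H_\ast(ko;\Z[1/2])$ torsion free. This fails at the prime $3$: $ko_{(3)}$ is the Adams summand, whose mod $3$ homology has a class in degree $17$, while its rational homology lives in degrees divisible by $4$. By Proposition~\ref{prop:kq-absolute}, this also rules out an integral splitting over $\Z$ or $\Z_{(p)}$, which are admissible bases in the statement. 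So your argument does not establish purity for $\kw_S^{\smash m+1}$, and hence not for $\mathrm{Tot}^n$.

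\textbf{The fix.} The missing idea is to treat the prime $2$ separately. It suffices to check the purity transformation after inverting $2$ and after localizing at $2$, since source and target commute with these colimits.
\begin{itemize}
\item After inverting $2$ there is nothing to prove. The ring $\pi_{0+(0)}\unit_Z[\eta^{-1}]$ is the Witt ring of a field of positive characteristic, which is $2$-primary torsion, so $\unit_Z[\eta^{-1}][1/2]\simeq 0$. Hence $i^\ast\unit_S[\eta^{-1}][1/2]$ vanishes. So does $i^!\unit_S[\eta^{-1}][1/2]$: it is an $\eta$-periodic object of $\SH(Z)$ with $2$ inverted, hence a module over $\unit_Z[\eta^{-1}][1/2]$.
\item At $2$, your argument runs with $\kw_{(2)}$ in place of $\kw$. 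The cofiber of $\unit[\eta^{-1}]_{(2)}\to\kw_{(2)}$ is still $4$-connective, and Corollary~\ref{cor:kw-coop} applies. This is what the paper does.
\end{itemize}

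\textbf{The totalization step.} Once the above is fixed, your treatment is a legitimate alternative, with one correction: the fiber of $\unit[\eta^{-1}]\to\mathrm{Tot}^n$ is $\Sigma^{-n-1}\C^{\smash n+1}$. That is only $3(n+1)$-connective, which still suffices. But it is more work than needed. The paper applies Lemma~\ref{lem:sphere-etainv-kwresolution} directly over $Z$ and identifies $i^\ast(\kw_S^{\smash m+1})\simeq\kw_Z^{\smash m+1}$ via Proposition~\ref{prop:kq-absolute}, so $i^\ast$ never has to pass through the limit. Since $i^!$ is a right adjoint, your connectivity estimate for $i^!$ is not needed at all.
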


\begin{proof}
  It suffices to check that the purity transformation is an equivalence
  after inverting $2$ and after localizing at $2$.
  In the first case, the purity transformation has contractible source and
  target, because $\pi_{0+(0)}\unit_F[\eta^{-1}]$,
  the Witt ring of a field $F$ of positive characteristic,
  becomes zero after inverting $2$. The purity statement for $\kq$ given in
  Proposition~\ref{prop:absolute-purity-kq} implies the
  same for $\kw$, and also for $\kw_{(2)}$, since both source and target
  of the purity transformation commute with colimits.
  The $\kw$-resolution of $\unit[\eta^{-1}]$ given in
  Lemma~\ref{lem:sphere-etainv-kwresolution} induces a $\kw_{(2)}$ resolution
  of $\unit[\eta^{-1}]_{(2)}$.
  Since $\kw\smash \kw_{(2)}$ is a direct sum of suspensions of copies of
  $\kw_{(2)}$ by Corollary~\ref{cor:kw-coop}, the purity transformation
  is an equivalence for every $m$-fold smash power of $\kw_{(2)}$.
  There results an equivalence
  \begin{align*}
    \unit_Z[\eta^{-1}]_{(2)} & \simeq
      \holim_{[m]\in \Delta} (\kw_Z)_{(2)}^{\smash m+1} \\
                             & \simeq
      \holim_{[m]\in \Delta} i^\ast((\kw_S)_{(2)}^{\smash m+1}) \\
                             & \simeq
      \holim_{[m]\in \Delta} \Sigma^{1+(1)}i^!((\kw_S)_{(2)}^{\smash m+1}) \\
                             &\simeq
     \Sigma^{1+(1)}i^!\bigl(\holim_{[m]\in \Delta} (\kw_S)_{(2)}^{\smash m+1})\\
                             & \simeq
                               \Sigma^{1+(1)}i^!\unit_S[\eta^{-1}]_{(2)}
  \end{align*}
  which, being compatible with the purity transformation, completes the proof.
\end{proof}

\begin{remark}\label{rem:ishriek-cellular}
  As an immediate consequence of Corollary~\ref{cor:absolute-purity-etainv},
  $i^!$ sends cellular motivic spectra in $\SH(S)[\eta^{-1}]$ to cellular
  motivic spectra in $\SH(Z)[\eta^{-1}]$ under
  the assumption on $i$.
\end{remark}

\newcommand{\etalchar}[1]{$^{#1}$}

\end{document}